\pgfplotsset{compat=1.16}
\tikzset{
    cross/.pic = {
    \draw[rotate = 45] (-#1,0) -- (#1,0);
    \draw[rotate = 45] (0,-#1) -- (0, #1);
    }
}
\pgfplotsset{compat=newest}
\pgfplotsset{compat=newest}
\definecolor{ao(english)}{rgb}{0.0, 0.5, 0.0}
\definecolor{gray}{gray}{0.4}
\theoremstyle{plain}
\newtheorem{theorem}{Theorem}[section]
\newtheorem{definition}[theorem]{Definition}
\newtheorem{lemma}[theorem]{Lemma}
\newtheorem*{problem*}{Problem}
\newtheorem{proposition}[theorem]{Proposition}
\newtheorem{theoremA}{Theorem}
\newtheorem*{theorem*}{Theorem}
\theoremstyle{definition}
\newtheorem{remark}[theorem]{Remark}
\def\Item$#1${\item $\displaystyle#1$
   \hfill\refstepcounter{equation}(\theequation)}
\theoremstyle{plain} 
\newtheorem{thm}{Theorem}[section] 
\newtheorem{cor}[thm]{Corollary}
\theoremstyle{definition}
\theoremstyle{remark} 
\numberwithin{equation}{section}
\date{}
\newcommand{\rr}{\mathbb{R}}
\newcommand{\hh}{\mathbb{H}}
\newcommand{\sss}{\mathbb{S}}
\newcommand{\e}{\epsilon}
\newcommand{\inn}{\textnormal{in}\ }
\newcommand{\onn}{\textnormal{on}\ }
\newcommand{\andd}{\textnormal{and}}
\newcommand{\dint}[1]{\ \textnormal{d}#1}
\newcommand{\dvol}{\ \textnormal{dv}}
\newcommand{\dsup}{\ \textnormal{da}}
\newcommand{\tr}[1]{\textnormal{tr}\left( #1 \right)}
\newcommand{\heat}{\left( \frac{\partial}{\partial t}+\Delta\right)}
\newcommand{\isom}[1]{\textnormal{Isom}\left(#1 \right)}
\newcommand{\derive}[2]{\frac{\partial #1}{\partial #2}}
\newcommand{\deriven}[3]{\dfrac{\partial^{#1}#2}{{\partial#3}^{#1}}}
\newcommand{\scal}[2]{\langle{#1},{#2}\rangle}
\newcommand{\twosystem}[2]{\left\{\begin{aligned} &#1\\ &#2\end{aligned}\right.}
\newcommand{\bd}{\partial}
\newcommand{\abs}[1]{\lvert{#1}\rvert}
\newcommand{\matrice}{\begin{pmatrix}}
\newcommand{\ok}{\end{pmatrix}}
\newcommand{\twomatrix}[4]{\begin{pmatrix} #1&#2\\#3&#4\end{pmatrix}}
\pgfplotsset{compat=1.18}
\tikzstyle{mybox} = [draw=black, very thick, rectangle, rounded corners, inner ysep=5pt, inner xsep=5pt]
\pgfplotsset{compat=1.18}
\title[]{Rigidity of an overdetermined heat equation and minimal helicoids in space-forms}
\author{Andrea Bisterzo and Alessandro Savo}
\address{Andrea Bisterzo
  \newline \indent Centro di Ricerca Matematica Ennio De Giorgi
  \newline \indent
  Scuola Normale Superiore di Pisa
\newline\indent Piazza dei Cavalieri 3, 56126, Pisa, Italy.}
\email{andrea.bisterzo@sns.it}
\address{Alessandro Savo
  \newline \indent Dipartimento di Scienze di Base e Applicate per l’Ingegneria
  \newline \indent
Sapienza Università di Roma 
\newline\indent Via Scarpa 16, 00161,
Roma, Italy.}
\email{alessandro.savo@uniroma1.it}
\begin{document}

\begin{abstract} Let $M$ be a Riemannian manifold and $\Omega$ a smooth domain of $M$. We study the following heat diffusion problem: assume that the initial temperature is equal to $1$, uniformly on $\Omega$, and is $0$ on its complement. Heat will then flow away  from $\Omega$  to its complement, and we are interested in the temperature on the boundary of $\Omega$ at all positive times $t>0$.  In particular we ask: are there domains for which the temperature at the boundary is a constant $c$,  for all positive times $t$ and for all points of the boundary?  If they exist, what can we say about their geometry?

This is a typical example of overdetermined heat equation. It is readily seen that if $c$ exists it must be $\frac 12$, and domains with constant boundary temperature will be said to have the $\frac 12$-property.  Previous work by \cite{MPS06} and \cite{CSU23} show that, on $\mathbb R^3$, the only such domains (up to congruences) have boundary which is a plane or (a bit surprisingly) the right helicoid. 


In this paper we first show that, in great generality, the boundary of a $\frac 12$-domain must be minimal; we then extend (with a different proof) the above classification from $\mathbb R^3$ to the other $3$-dimensional space-forms.  We prove that, in $\mathbb S^3$, $\frac 12$-domains are bounded by a totally geodesic surface or the Clifford torus, and in the hyperbolic space $\mathbb H^3$ are bounded by a totally geodesic surface or by an (embedded) minimal hyperbolic helicoid. 
As a by-product, we extend (with a different proof) a result by Nitsche on uniformly dense domains from $\mathbb R^3$ to $3$-dimensional space-forms. 
\end{abstract}

\maketitle

\section{Introduction}\label{introduction}

\subsection{The Cauchy temperature and the main result} Let $M$ be a smooth, stochastically complete Riemannian manifold and $\Omega\subset M$ a possibly unbounded smooth domain. We assume that $M$ has empty boundary. The Riemannian metric on $M$ (and on any of its hypersurfaces) will be denoted by $\scal{\cdot}{\cdot}$.

The \textit{Cauchy temperature $u_C$ associated to $\Omega$} is defined as the unique  bounded solution $u_C=u_C(t,x)$ on $(0,\infty)\times M$ to the following initial value problem:
\begin{align}\label{Prob:Cauchy}
\begin{cases}
\heat u_C=0 & \inn (0,+\infty)\times M\\
u_C=\chi_{\Omega} & \onn \{0\}\times M,
\end{cases}
\end{align}
where $\Delta$ denotes the positive definite Laplace-Beltrami operator associated to $(M,\scal{\cdot}{\cdot})$ (on $\rr^n$ it is $\Delta u=-\sum_j u_{jj}$) and $\chi_\Omega$ is the characteristic function of the domain $\Omega$. The assumption that $M$ is stochastically complete is needed to ensure that \eqref{Prob:Cauchy} admits at most one bounded solution (see \cite[Theorem 6.2]{Gr99} and the literature cited therein). Sufficient conditions for stochastic completeness are found in \cite{Gr99}; a classical fact is that every geodesically complete manifold with Ricci curvature bounded from below is stochastically complete \cite{Yau78} (see also \cite{Hs02, Gr09}).

In this paper we are interested in the behavior of the temperature $u_C$ on the boundary of $\Omega$. It can be shown that
$\lim_{t\to 0^+}u_C(t,x)=\frac 12$ for all $x\in\bd\Omega.$ There are situations where, however, the boundary temperature is $\frac 12$ for all times: 
\begin{equation}\label{half}
u_C(t,x)=\frac 12 \quad\text{for all $t>0$ and $x\in\bd\Omega$.}
\end{equation}
The overdetermination \eqref{half} rarely holds; it holds, for example, when $\Omega$ is a half-space in $\mathbb R^n$ or a hemisphere in $\mathbb S^n$; more generally, \eqref{half} holds whenever there is an involutive isometry taking $\Omega$ to the interior of its complement and fixing the boundary of $\Omega$.
More general sufficient conditions for having \eqref{half} are given in Section \ref{Sec:SufficientConditions} below.

\begin{definition}
    We say that $\Omega$ has the \textnormal{$\frac 12$-property}, or that is a \textnormal{$\frac 12$-domain}, if the Cauchy temperature on $\Omega$ satisfies \eqref{half}.
\end{definition} 

\noindent In other words, $\frac 12$-domains are precisely those domains which support a solution to the overdetermined heat equation:
\begin{align}\label{overhalf}
\begin{cases}
\heat u_C=0 & \inn (0,+\infty)\times M\\
u_C=\chi_{\Omega} & \onn \{0\}\times M\\
u_C=\frac 12 & \onn (0,\infty)\times\bd\Omega.
\end{cases}
\end{align}

\noindent{\bf Assumptions on $\Omega$.} In what follows we assume that $\Omega$ is a (possibly unbounded) domain with smooth boundary $\Sigma$ in a stochastically complete Riemannian manifold $M$, such that
$
\bd\Omega=\bd(M\setminus\overline\Omega)\doteq\Sigma.
$
In particular, setting $\Omega_+=\Omega$ and $\Omega_-=M\setminus\overline{\Omega}$, we have a disjoint union
\begin{equation}\label{disunion}
M=\Omega_+\sqcup\Sigma\sqcup\Omega_-.
\end{equation}
$\Sigma$ is then a  smooth, complete $(n-1)$-dimensional hypersurface of $M$. We also assume that $\Sigma$ is connected.

\medskip
In this paper we first prove that the $\frac 12$-property implies minimality, under rather general assumptions:

\begin{theoremA}\label{Thm_MinimalityBoundary}
Let $\Omega$ be a (possibly unbounded) domain in a stochastically and geodesically complete $n$-dimensional Riemannian manifold $M$. If $\Omega$ has the $\frac{1}{2}$-property, then $\partial\Omega$ is minimal. 
\end{theoremA}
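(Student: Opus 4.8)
The plan is to read off the mean curvature $H$ of $\Sigma=\partial\Omega$ from the short–time asymptotics of $u_C$ restricted to $\Sigma$, and then use the $\tfrac12$–overdetermination to force $H\equiv 0$.

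\textbf{Localisation.} Fix $p\in\Sigma$; it suffices to prove $H(p)=0$. Writing $u_C(t,x)=\int_M p_M(t,x,z)\chi_\Omega(z)\dd z$ with $p_M$ the heat kernel of $M$, one has $u_C(t,p)-\tfrac12=\int_M p_M(t,p,z)\big(\chi_\Omega(z)-\tfrac12\big)\dd z$. Since $M$ is geodesically complete, the heat kernel has the usual off–diagonal decay $\int_{M\setminus B_\rho(p)}p_M(t,p,z)\dd z=O(t^\infty)$ as $t\to 0^+$, so only a fixed small geodesic ball around $p$ contributes to $u_C(t,p)-\tfrac12$ up to an $O(t^\infty)$ error, and we may work in a neighbourhood of $p$ in which $\Sigma$ is a smooth hypersurface splitting the ball into the two pieces $\Omega_\pm$.

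\textbf{The model near $\Sigma$.} On a (uniform, by geodesic completeness) tubular neighbourhood of $\Sigma$ I would introduce Fermi coordinates $(\zeta,r)$, with $r$ the signed distance to $\Sigma$, positive on $\Omega_+=\Omega$. There $\heat u_C=0$ reads $\partial_t u_C=\partial_r^2 u_C+\theta(\zeta,r)\partial_r u_C-\Delta_{\Sigma_r}u_C$, where $\theta(\zeta,0)$ is the mean curvature $H(\zeta)$ of $\Sigma$ (w.r.t. the normal into $\Omega$), and $u_C(0,\cdot)=\chi_{\{r>0\}}$. I would capture the short–time behaviour near $p$ via the similarity variable $\xi=r/(2\sqrt t)$ and the ansatz $u_C\big(t,(\zeta,r)\big)=F_0(\xi)+\sqrt t\,F_1(\xi,\zeta)+O(t)$. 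Collecting powers of $\sqrt t$: the leading order gives $F_0''+2\xi F_0'=0$ with $F_0(+\infty)=1$, $F_0(-\infty)=0$, so $F_0(\xi)=\tfrac12\big(1+\operatorname{erf}\xi\big)$, in particular $F_0(0)=\tfrac12$ (this is exactly the half–space solution). The next order yields
\begin{equation*}
F_1''+2\xi F_1'-2F_1=-\frac{2H(p)}{\sqrt\pi}\,e^{-\xi^2},\qquad F_1(\pm\infty)=0 .
\end{equation*}
The homogeneous operator annihilates $\xi$ and has no nontrivial solution decaying at both ends, so the decaying solution is the unique particular one, $F_1(\xi,p)=\dfrac{H(p)}{2\sqrt\pi}\,e^{-\xi^2}$, hence $F_1(0,p)=\dfrac{H(p)}{2\sqrt\pi}$.

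\textbf{Conclusion.} Evaluating on the boundary ($r=0$, i.e. $\xi=0$) gives
\begin{equation*}
u_C(t,p)=\frac12+\frac{H(p)}{2\sqrt\pi}\,\sqrt t+O(t),\qquad t\to 0^+ .
\end{equation*}
If $\Omega$ has the $\tfrac12$–property the left–hand side is identically $\tfrac12$, so $H(p)=0$; as $p\in\Sigma$ is arbitrary, $\Sigma$ is minimal.

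\textbf{Where the work is.} The one non–formal point is to legitimise the expansion with a genuine $O(t)$ remainder, i.e. $u_C(t,p)=F_0(0)+\sqrt t\,F_1(0,p)+O(t)$. I expect this to be the main obstacle: it is a parametrix / matched–asymptotics argument in which one builds an approximate solution $\bar u=F_0(\xi)+\sqrt t\,\chi(r)F_1(\xi,\zeta)$ with $\chi$ a cut–off supported in the tubular neighbourhood, checks that $\heat\bar u$ is small and that $\bar u$ matches $\chi_\Omega$ off $\Sigma$ up to $O(t^\infty)$, and then controls $u_C-\bar u$ either via Gaussian–type heat–kernel bounds available on a geodesically complete manifold or by a maximum–principle comparison with $\bar u\pm Ct$; geodesic completeness secures the uniform tubular neighbourhood and the off–diagonal heat–kernel decay, while stochastic completeness (already assumed) makes $u_C$ well defined. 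Alternatively one can simply quote the classical short–time heat–content asymptotics near a smooth boundary and read off the coefficient $H(p)/(2\sqrt\pi)$ with its sign; equivalently, the same conclusion follows by computing the common boundary normal derivative $\partial_r\big(u_C-\tfrac12\big)$ on $\Sigma$ from the Dirichlet heat flows on $\Omega_+$ and on $\Omega_-$, whose short–time expansions agree only if the mean curvature — entering with opposite signs because the two inward normals are opposite — vanishes.
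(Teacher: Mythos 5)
Your argument is correct in outline but takes a genuinely different route from the paper. The paper first proves the uniqueness Theorem \ref{Thm_UniquenessDirichletAppendix} to show that, under the $\tfrac12$-overdetermination, $u_D=2u_C-1$ on $\Omega$ and $u_D^-=1-2u_C$ on the exterior, so the Dirichlet heat flows $\partial u_D^{\pm}/\partial N_{\pm}$ agree on $\Sigma$; it then invokes the short-time expansion of the Dirichlet heat flow (Theorem \ref{Thm:UnboundedSavoExpansion}, extending \cite{Sa04} to the unbounded case), whose leading coefficient is $\gamma_0=-\tfrac12\eta$, and the sign flip $\eta_-=-\eta_+$ forces $\eta\equiv 0$. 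You instead expand the Cauchy temperature itself near the boundary via a matched-asymptotics ansatz $u_C=F_0(\xi)+\sqrt t\,F_1(\xi,\zeta)+O(t)$ with $\xi=r/(2\sqrt t)$, getting the half-space profile $F_0(\xi)=\tfrac12(1+\operatorname{erf}\xi)$ at leading order and, at the next order, an inhomogeneous Hermite-type ODE whose unique decaying solution gives $F_1(0)=H/(2\sqrt\pi)$; your ODE and the value of $F_1(0)$ check out, and the sign of $H$ is immaterial for the conclusion. The trade-off is exactly the one you flag: your route still requires a rigorous proof that the $O(t)$ remainder is genuine (a parametrix or barrier/comparison estimate near $\Sigma$), which is precisely the nontrivial technical content that the paper discharges instead by quoting the already-established expansion of $\partial u_D/\partial N$ together with the new uniqueness theorem for the Dirichlet heat problem on unbounded domains of stochastically complete manifolds. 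Your closing suggestion — compare the Dirichlet heat flows of $\Omega_+$ and $\Omega_-$ and note that the mean curvature enters with opposite signs — is, in fact, exactly the paper's argument.
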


When $M=\rr^n$ this fact has been proved in \cite{CSU23} by a different method, using the explicit expression of the Euclidean heat kernel.  In the same paper, using a  result by Nitsche,  the author study the rigidity of the $\frac 12$-property in $\rr^3$, showing that

\begin{theorem}[Theorem 1.2 in \cite{CSU23}]\label{MPS} $\Omega\subseteq \rr^3$ has the $\frac 12$-property if and only is its boundary is either a plane or the right minimal helicoid. 
\end{theorem}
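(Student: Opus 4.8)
The plan is to deduce Theorem \ref{MPS} from the two ingredients highlighted in the excerpt: Theorem \ref{Thm_MinimalityBoundary}, which forces $\Sigma=\partial\Omega$ to be a minimal hypersurface of $\rr^3$, and the classification (via Nitsche's theorem on uniformly dense domains, to be re-proved in a later section) of those complete minimal surfaces that can bound a $\frac12$-domain. One direction is easy: if $\partial\Omega$ is a plane then $\Omega$ is a half-space and the reflection across the plane is an involutive isometry swapping $\Omega$ with $\Omega_-$ and fixing $\Sigma$, so \eqref{half} holds by the sufficient condition already recorded in the introduction; for the right helicoid one exhibits the analogous symmetry, namely the screw motion / rotation by $\pi$ about an axis lying on the helicoid, which is an involutive isometry of $\rr^3$ interchanging the two nappes $\Omega_\pm$ and fixing $\Sigma$, again giving \eqref{half}. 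So the substance is the converse.

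For the converse, I would argue as follows. Assume $\Omega\subseteq\rr^3$ has the $\frac12$-property. By Theorem \ref{Thm_MinimalityBoundary}, $\Sigma$ is a complete embedded minimal surface in $\rr^3$, separating $\rr^3$ into the two components $\Omega_+,\Omega_-$. The key point is that the overdetermined condition $u_C(t,x)=\frac12$ on $\Sigma$ for all $t>0$ propagates to a strong geometric constraint on $\Sigma$: expanding $u_C$ for small $t$ in powers of $t^{1/2}$ along the normal direction, the vanishing of all the higher-order boundary coefficients is equivalent, by the heat-content / heat-kernel asymptotics machinery, to $\Sigma$ being a \emph{uniformly dense} hypersurface — i.e. for every $x\in\Sigma$ and every small $r$, the volume $\vol{B_r(x)\cap\Omega_+}$ equals $\frac12\vol{B_r(x)}$ (this is exactly Nitsche's notion, and the later section of the paper extends Nitsche's rigidity to space-forms; here I would invoke the Euclidean case). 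Then I would quote Nitsche's theorem: a connected complete uniformly dense surface in $\rr^3$ is either a plane or a right helicoid. Combining, $\Sigma$ is a plane or a right helicoid, hence $\Omega$ is (a congruent copy of) a half-space or one of the two congruent nappes of the helicoid, completing the proof.

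The main obstacle — and the reason the paper needs a separate section — is making rigorous the passage from the temperature condition \eqref{half} to uniform density. This is not a one-line matching of heat-kernel coefficients: $\Omega$ may be unbounded, the standard heat-content asymptotics are stated for bounded domains, and one must control the contribution of the part of $\Omega_+$ far from $x$ to the value $u_C(t,x)$ as $t\to0^+$. The cleanest route is a localization/parametrix argument: near a boundary point the Euclidean heat kernel $p_t(x,y)=(4\pi t)^{-n/2}e^{-|x-y|^2/4t}$ gives $u_C(t,x)=\int_{\Omega_+}p_t(x,y)\,dy$, and Gaussian decay confines the integral to a ball of radius $O(\sqrt t\,|\log t|)$ around $x$; inside that ball one expands $\Omega_+$ against the tangent half-space and reads off that the $\frac12$-property order by order is equivalent to the density condition. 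I expect that handling this expansion carefully (uniformly in $x\in\Sigma$, and in the non-compact setting) is the technical heart; once uniform density is in hand, invoking Theorem \ref{Thm_MinimalityBoundary} and the (re-proved) Nitsche classification is straightforward. An alternative that sidesteps some of this analysis is to note that Theorem \ref{Thm_MinimalityBoundary} already gives minimality directly, so one only needs the \emph{second-order} information beyond minimality to pin down the surface among minimal ones; but the honest and most robust argument is the uniform-density one, which also yields the promised Nitsche-type corollary as a by-product.
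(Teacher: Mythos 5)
Your \emph{if} direction for the helicoid is slightly off as stated: the rotation by $\pi$ about a ruling of the helicoid maps $\Sigma$ to itself as a set but does \emph{not} fix it pointwise, so the simple sufficient criterion recorded in the introduction (an involutive isometry swapping $\Omega_\pm$ and fixing $\bd\Omega$ pointwise) does not apply directly. The paper handles this in Section \ref{Sec:SufficientConditions} with the more flexible Theorem \ref{Lem_12_RototranslationsReflection}, combining that rotation $\Psi$ with the one-parameter group $\mathcal{T}$ of screw motions so that for every $x\in\Sigma$ there is a $T\in\mathcal{T}$ with $T(x)=\Psi(x)$. Your argument needs the same patch.

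The real gap is in the converse, where you propose passing from the $\frac12$-property to $\frac12$-uniform density by a localization/parametrix argument and an order-by-order matching of the small-$t$ expansion of $u_C$ against the small-$r$ expansion of $\sigma(r,x)$. This has two problems. First, it is genuinely weaker than what Nitsche's theorem (as stated) requires: an asymptotic expansion as $t\to 0^+$ only controls the Taylor coefficients $\sigma^{(k)}(x)$ of the density at $r=0$, and the vanishing of \emph{all} of them does not by itself give $\sigma(r,x)=\tfrac12$ for a.e. $r>0$ (the density function has no reason to be analytic in $r$, even for an analytic minimal surface). Second, the localization you worry about is a red herring in $\mathbb R^n$. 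The paper's route — taken verbatim from \cite{CSU23} and recalled in \eqref{cauchye} — is exact, not asymptotic: writing
\[
u_C(t,x)=\dfrac{1}{(4\pi t)^{n/2}}\int_0^{\infty}e^{-\frac{r^2}{4t}}\,\abs{\bd B_r(x)}\,\sigma(r,x)\,dr
\]
exhibits $u_C(t,x)$ as a Laplace transform (in the variable $1/(4t)$, after $s=r^2$) of a polynomially bounded function of $r$, valid for any domain (bounded or not) with no parametrix and no tail estimate. Injectivity of the Laplace transform then gives the exact equivalence: $u_C(t,x)\equiv\tfrac12$ for all $t>0$ iff $\sigma(r,x)=\tfrac12$ for a.e.\ $r>0$, and only then does Nitsche's Theorem \ref{Nitsche} apply as a black box. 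So your approach is not what the paper does; it replaces a clean exact identity with an asymptotic one, and in doing so loses the conclusion you actually need (full uniform density rather than a flat Taylor jet). You could salvage the asymptotic route by invoking not Nitsche's \emph{statement} but the internal fact that his proof only uses $\sigma^{(0)}=\cdots=\sigma^{(4)}=0$; but as written, the claim that heat-content asymptotics ``is equivalent to $\Sigma$ being a uniformly dense hypersurface'' is the missing step.

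Finally, note that the paper does not reprove Theorem \ref{MPS} at all — it cites \cite{CSU23} and recalls the argument above; the paper's own independent proof (Theorem \ref{divergence}) goes through the Dirichlet temperature $u_D=2u_C-1$, the heat-flow invariants $\gamma_k$, and the divergence condition \eqref{divcondition}, which is a third route distinct from both the Laplace-transform argument and your proposal.
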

\noindent We will say few words on the proof in Subsection \ref{uniformdensity}. \medskip

The main result of this paper is to give a classification of $\frac 12$-domains in $3$-dimensional space forms, by using an approach different to the one in \cite{CSU23} (we will sketch our argument in Section \ref{Subsec:Approach} below). 

In this paper, $M^3_{\sigma}$ will denote the $3$-dimensional manifold of constant sectional curvature $\sigma$ (space-form). It is enough to consider the cases where $\sigma=0$ ($M=\mathbb R^3$), $\sigma=1$ ($M=\mathbb S^3$) and $\sigma=-1$, in which case  $M=\mathbb H^3$, the hyperbolic $3$-space. 

\begin{theoremA}\label{Thm_Main}
Let $\Omega$ be a smooth domain  in the $3$-dimensional space-form $M_{\sigma}$. Then, $\Omega$ has the $\frac 12$-property if and only if its boundary is totally geodesic or:

1) the right helicoid if $M=\rr^3$;

2) the Clifford torus if $M=\sss^3$;

3) an embedded minimal hyperbolic helicoid if $M=\hh^3$.

\end{theoremA}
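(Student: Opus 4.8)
The plan is to split the analysis into the ``easy'' direction and the ``hard'' (rigidity) direction, and in the rigidity direction to first reduce the PDE overdetermination to a purely geometric statement about the hypersurface $\Sigma$, and then to invoke/extend the relevant surface classifications in each space-form.

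\medskip

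\emph{Step 1: sufficiency.} First I would check that each of the listed surfaces actually produces a $\frac12$-domain. For a totally geodesic $\Sigma$ this is immediate: the geodesic reflection across $\Sigma$ is an involutive isometry of $M_\sigma$ swapping $\Omega_+$ and $\Omega_-$ and fixing $\Sigma$ pointwise, so $u_C(t,\cdot)$ is anti-symmetric about the value $\frac12$ on $\Sigma$, giving \eqref{half}; this is exactly the sufficient condition quoted in the introduction. For the right helicoid in $\rr^3$, the Clifford torus in $\sss^3$, and the hyperbolic helicoid in $\hh^3$, the same mechanism works: each of these hypersurfaces divides $M_\sigma$ into two congruent pieces interchanged by an involutive isometry fixing the surface (screw-motion by a half period composed with a reflection for the helicoids; the obvious $\zz_2$ isometry of $\sss^3=S^1(1/\sqrt2)\times S^1(1/\sqrt2)$ swapping the two factors, suitably conjugated, for the Clifford torus). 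So all four families have the $\frac12$-property, and the content is the converse.

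\medskip

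\emph{Step 2: from the $\frac12$-property to a geometric PDE on $\Sigma$.} By Theorem A, any $\frac12$-domain has minimal boundary $\Sigma$, so we only need to classify minimal hypersurfaces of $M_\sigma^3$ that can bound a $\frac12$-domain. Here I would pass from the heat equation to its elliptic avatar by taking Laplace transforms in $t$: for $\lambda>0$ the function $w_\lambda=\int_0^\infty e^{-\lambda t}\bigl(u_C(t,\cdot)-\tfrac12\bigr)\,dt$ (with an appropriate subtraction to control the behavior at $t=0$) satisfies a Helmholtz-type equation $\Delta w_\lambda+\lambda w_\lambda=(\text{jump data from }\chi_\Omega)$ on $M_\sigma\setminus\Sigma$ and $w_\lambda=0$ on $\Sigma$ for \emph{all} $\lambda>0$. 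Differentiating the overdetermined condition along $\Sigma$ and using the minimality to simplify the second fundamental form terms, one extracts an infinite family of pointwise identities relating the principal curvatures of $\Sigma$ and their covariant derivatives. Concretely, I expect this to force the \emph{scalar curvature of $\Sigma$ to be constant}, and more: iterating (essentially a power-series/heat-content expansion in $t$ near $t=0$, as in Nitsche's ``uniformly dense'' argument alluded to in the paper) should show that $\Sigma$ has \emph{all} its intrinsic and extrinsic invariants constant — i.e. $|A|^2$ is constant, hence (being minimal) the Gauss equation gives constant Gauss curvature, and the Codazzi equations then pin down the full local structure. This is the step where the space-form hypothesis is used essentially, since it is what makes the heat kernel expansion's coefficients computable and makes ``constant invariants'' a strong constraint.

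\medskip

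\emph{Step 3: classifying the resulting surfaces in each $M_\sigma^3$.} Once $\Sigma$ is known to be a complete minimal surface in $M_\sigma^3$ with $|A|^2$ constant (and, say, constant Gauss curvature), the classification is classical or near-classical. In $\sss^3$: a complete minimal surface with constant $|A|^2$ is, by Chern--do Carmo--Kobayashi / Lawson, either totally geodesic ($|A|^2=0$) or the Clifford torus ($|A|^2=2$); the higher-curvature Lawson surfaces are excluded because they do not separate $\sss^3$ into two congruent halves (or, more directly, because the next-order heat coefficient obstructs them). In $\rr^3$: a complete minimal surface with constant $|A|^2$ is a plane ($|A|^2=0$) or has $|A|^2>0$ constant, and among those the only one bounding a domain with the required $\zz_2$ symmetry is the (right) helicoid — this is essentially Nitsche's theorem on uniformly dense domains, which the paper says it reproves. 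In $\hh^3$: analogously, totally geodesic or a minimal surface of constant $|A|^2$; the screw-invariant examples are the hyperbolic helicoids, and embeddedness singles out the stated one. In each case I would finish by checking that the symmetry needed for the $\frac12$-property (Step 1) indeed holds only for these representatives, ruling out, e.g., non-embedded helicoids or multiply-covered tori.

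\medskip

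\emph{Main obstacle.} The crux is Step 2: turning the ``all-$\lambda$'' (equivalently, all-$t$) overdetermination into the clean statement ``$\Sigma$ has constant second fundamental form.'' Getting minimality (Theorem A) is only the first Taylor coefficient; squeezing constancy of $|A|^2$ and of $\nabla A$ out of the higher coefficients of the heat-content / Laplace-transform expansion, and doing so uniformly for unbounded $\Sigma$ without compactness, is where the real work — and the need for the constant-curvature ambient space — lies. A secondary, more bookkeeping-level obstacle is the non-compact helicoidal case, where one must argue that constancy of invariants plus completeness plus the separating-symmetry property forces exactly a helicoid and not some other screw-invariant or periodic minimal surface.
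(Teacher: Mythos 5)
Your Step 2 contains a genuine error that would derail the whole argument: you predict that the overdetermination forces $|A|^2$ (equivalently, the Gauss curvature $K$) of $\Sigma$ to be constant, and Step 3 is then framed as a classification of complete minimal surfaces with constant $|A|^2$ (Chern--do Carmo--Kobayashi, etc.). But this is false for the very surfaces in the answer: the right helicoid in $\rr^3$ has $K=-(1+v^2)^{-2}$ in standard coordinates, hence $|A|^2=-2K$ is not constant, and the hyperbolic helicoids likewise have non-constant curvature. Only the Clifford torus happens to have constant $K$. So the heat-flow overdetermination cannot and does not imply constancy of the second fundamental form; if it did, the helicoid would be ruled out rather than ruled in. What the paper actually extracts from the higher-order coefficients is a genuine PDE on $\Sigma$, the ``divergence condition'' $\operatorname{div}(S(\bar\nabla K))=0$, and the rigidity step then shows (via an isothermal parametrization with asymptotic coordinate lines, following Nitsche) that this PDE forces $\Sigma$ to be \emph{geodesically ruled}. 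The final classification is then of embedded minimal ruled surfaces in space-forms (Catalan in $\rr^3$, Lawson in $\sss^3$, do Carmo--Dajczer in $\hh^3$), not of constant-$|A|^2$ surfaces.

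A secondary mismatch: the paper does not Laplace-transform $u_C$ on $\Sigma$. It instead identifies $u_D=2u_C-1$ on $\Omega$ (using a uniqueness theorem for the heat equation on unbounded domains of stochastically complete manifolds, which is where the Laplace transform actually appears) and then applies the asymptotic expansion of the Dirichlet heat flow $\partial u_D/\partial N$ from \cite{Sa04}, extended here to unbounded domains. The coefficients $\gamma_k$ of that expansion are the computable invariants; the argument that the $\gamma_{2n}$ must all vanish comes from comparing the flows on $\Omega_+$ and $\Omega_-$. Also, in Step 1 your stated mechanism (``an involutive isometry fixing the surface'') only applies literally in the totally geodesic case: the $\pi$-rotation swapping the two sides of a helicoid does not fix the helicoid pointwise, and the paper has to combine it with the one-parameter family of screw motions preserving $\Sigma$ (its Theorem on roto-translations and reflections) to conclude. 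That part is fixable with more care; the $|A|^2$-constancy claim in Step 2 is not.
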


The first two surfaces are well-known (parametrizations will be given in Section \ref{Sec:SufficientConditions}). Let us describe the family of  embedded minimal hyperbolic helicoids, which is infinite, and is  indexed by $\alpha\in (0,\infty)$. These surfaces have been studied in \cite{dCD83, Mor81, Mor82, Wa19}. A parametrization in the
hyperboloid model of $\hh^3$ is given as follows:
\begin{align}
X^\alpha:\rr^2 &\to \hh^3\\
(u,v) &\mapsto \left(\begin{array}{c} \cosh(\alpha u) \cosh(v)\\ \sinh(\alpha u) \cosh(v)\\ \cos(u) \sinh(v)\\ \sin(u) \sinh(v) \end{array} \right).
\end{align}
In the Poincaré ball model of $\hh^3$ the parametrization becomes
\begin{align}
X^\alpha:\rr^2 & \to \hh^3\\
(u,v) &\mapsto \left(\begin{array}{c} \frac{\sinh(\alpha u) \cosh(v)}{1+\cosh(\alpha u) \cosh(v)}\\ \frac{\cos(u) \sinh(v)}{1+\cosh(\alpha u) \cosh(v)} \\ \frac{\sin(u) \sinh(v)}{1+\cosh(\alpha u) \cosh(v)}\end{array} \right).
\end{align}
\noindent In Figure \ref{Fig:HyperbolicHelicoids}, the plots of some embedded minimal hyperbolic helicoid in the Poincaré ball.

\begin{figure}[h!]
\centering
\includegraphics[width=3cm]{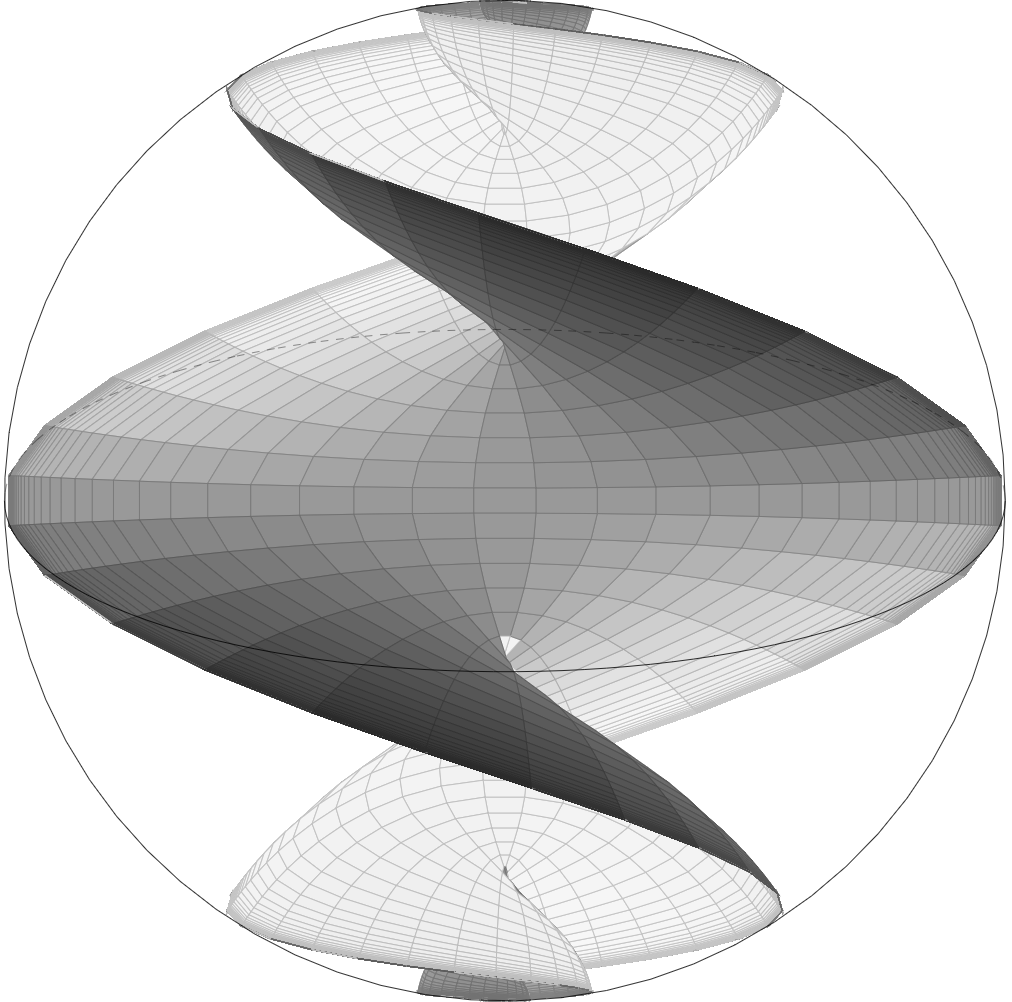}\hspace{1.6cm}
\includegraphics[width=3cm]{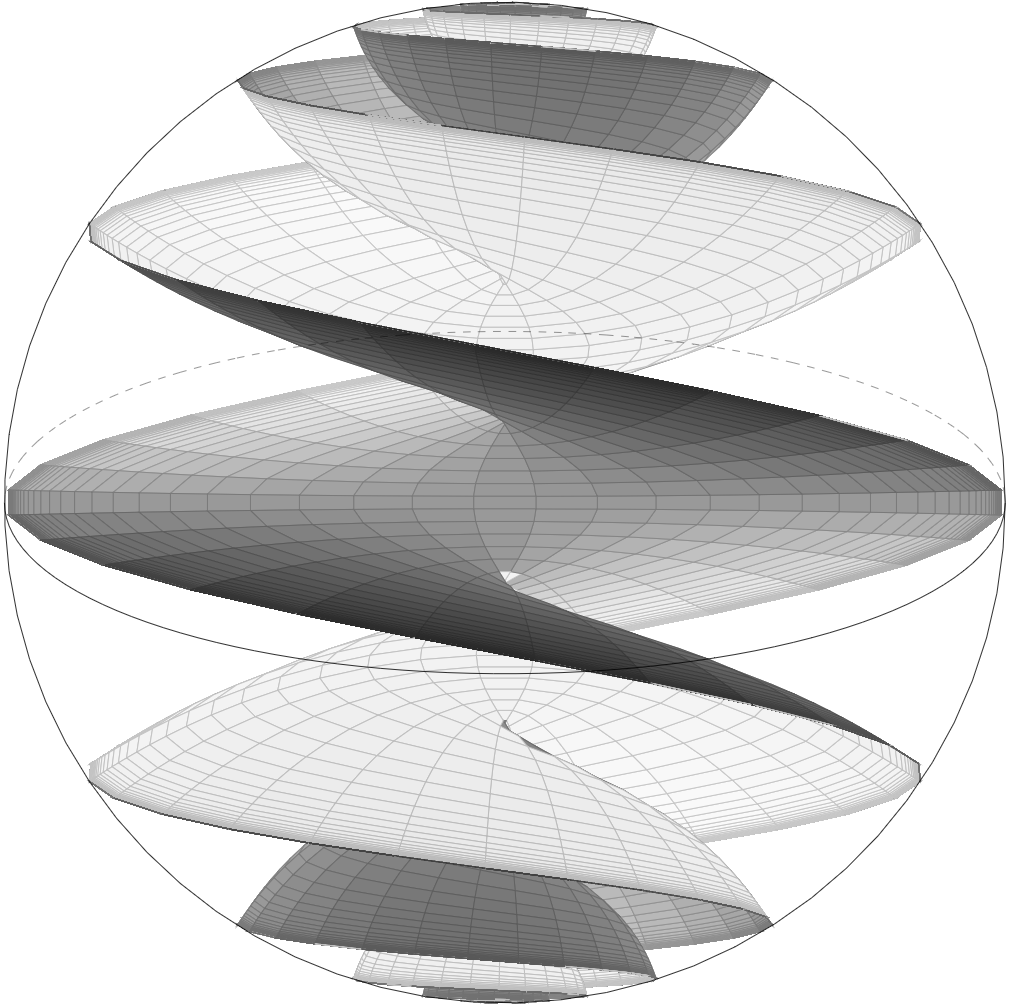}\hspace{1.6cm}
\includegraphics[width=3cm]{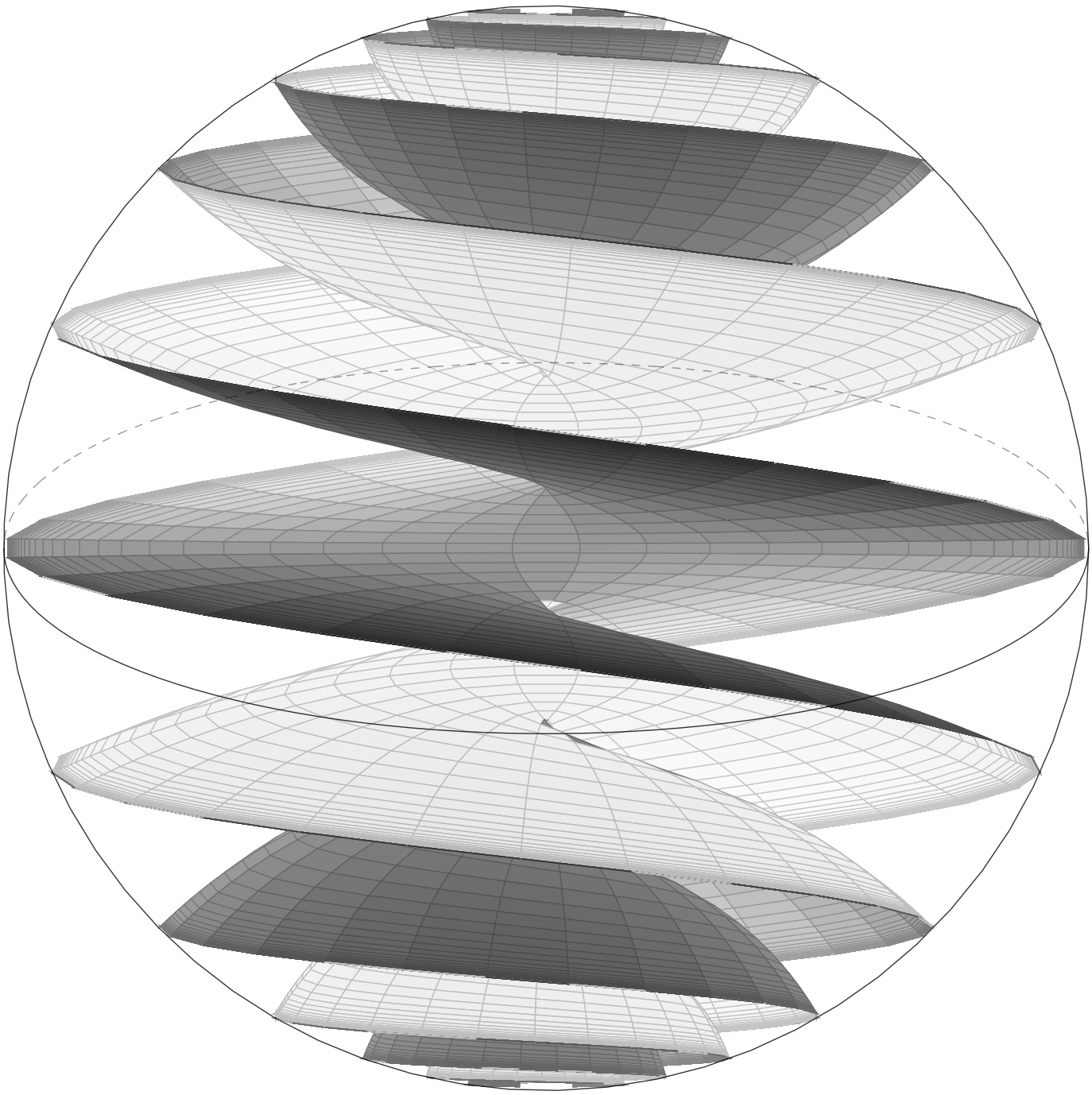}\\

\caption{Embedded minimal hyperbolic helicoids in the Poincaré ball model for $\alpha=\frac{2}{5}, \frac{1}{5}, \frac{7}{50}$.}
\label{Fig:HyperbolicHelicoids}
\end{figure}

\begin{remark}  In this paper, the term {\it embedded minimal helicoid} will refer to one of the surfaces 1), 2), 3) listed in Theorem \ref{Thm_Main}. Indeed, also the Clifford torus can be seen as a (minimal) helicoidal surface in the $3$-sphere (see Remark \ref{Rmk_SphericalHelicoids}).
\end{remark}

\subsection{Uniform density and the argument in \cite{CSU23}}\label{uniformdensity} As explained in \cite{CSU23, MPS06}, when $\Omega\subseteq\mathbb R^n$, the  $\frac 12$-property is equivalent to the following property introduced by Cimmino in \cite{Ci}: the volume
of the intersection of $\Omega$ with a ball in $\mathbb R^n$ centered at a point of the boundary is always half the volume of the ball. There is an equivalent formulation: for $r>0$ and $x\in\bd\Omega$ introduce the {\it density function} $\sigma: (0,\infty)\times\bd\Omega\to \mathbb R$ defined by:
$$
\sigma(r,x)=\frac{\abs{\Omega\cap\bd B_r(x)}}{\abs{\bd B_r(x)}};
$$
then, following \cite{MPS06}, we say that $\Omega$ is {\it $\frac 12$-uniformly dense} in $\bd\Omega$ if 
\begin{equation}\label{densitycondition}
\sigma(r,x)=\frac 12
\end{equation}
for almost every $r>0$ and for every $x\in\bd\Omega$.

Using the explicit representation of the heat kernel of $\mathbb R^n$, we have for any $(t,x)\in (0,+\infty)\times \rr^n$
$$
u_C(t,x)=\dfrac{1}{(4\pi t)^{n/2}}\int_{\Omega}e^{-\frac{\abs{x-y}^2}{4t}}\,\dvol_y.
$$
Fixed $x\in\bd\Omega$, integrating on concentric spheres centered at $x$ one obtains, by the co-area formula,
\begin{equation}\label{cauchye}
\begin{aligned}
u_C(t,x)&=\dfrac{1}{(4\pi t)^{n/2}}\int_0^{\infty}e^{-\frac{r^2}{4t}}\abs{\Omega\cap \bd B_r(x)}\,dr\\
&=\dfrac{1}{(4\pi t)^{n/2}}\int_0^{\infty}e^{-\frac{r^2}{4t}}\abs{\bd B_r(x)}\sigma(r,x)\,dr
\end{aligned}
\end{equation}
and, using the injectivity of the Laplace transform, one sees that  $u_C(t,x)=\frac 12$ for all $(t,x)\in (0,+\infty)\times \partial \Omega$  if and only if $\sigma(r,x)=\frac 12$ for all $x\in \partial \Omega$ and for almost every $r>0$ (\cite[Page 14]{CSU23}). Thus, 
\begin{center}
    {\it For domains in $\mathbb R^n$, the $\frac 12$-property is equivalent\\ to $\Omega$ being $\frac 12$-uniformly dense in $\bd\Omega$.}
\end{center}

\smallskip

Hence, in discussing the $\frac 12$-property, it is enough to study the equivalent condition \eqref{densitycondition}. Now the function $\sigma(r,x)$ admits, as $r\to 0$, a Taylor expansion of type:
\begin{equation}\label{cexpansion}
\sigma(r,x)\sim\frac 12+\sum_{k=0}^{\infty}\sigma^{(k)}(x)r^{k+1}
\end{equation}
for certain invariants $\sigma^{(k)}(x)$ depending on $x$. Thus, $\frac 12$-uniform density \eqref{densitycondition} implies that all such invariants vanish.  It is proved in \cite{MPS06} that $\sigma_0(x)$ is a non-zero multiple of the mean curvature $H$ of $\bd\Omega$ at $x$, which implies that $\bd\Omega$ must be minimal (in $\mathbb R^3$, this fact was first observed by Cimmino in \cite{Ci}). So we get

\begin{theorem}[Theorem 1.2 in \cite{CSU23}]
Domains in $\mathbb R^n$ with the $\frac 12$-property have minimal boundary.    
\end{theorem}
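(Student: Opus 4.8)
The plan is to reduce the statement to the local behaviour of the density function $\sigma(r,x)$ at $r=0$, following the discussion above. First, I would invoke the equivalence just recalled — which comes from the representation \eqref{cauchye} together with the injectivity of the Laplace transform — to the effect that a domain $\Omega\subseteq\rr^n$ has the $\frac12$-property if and only if it is $\frac12$-uniformly dense in $\bd\Omega$, i.e. $\sigma(r,x)=\frac12$ for almost every $r>0$ and every $x\in\bd\Omega$. Since $r\mapsto\sigma(r,x)$ is continuous — indeed it admits the asymptotic expansion \eqref{cexpansion} as $r\to0$ — this almost-everywhere identity upgrades to $\sigma(r,x)=\frac12$ for all small $r>0$ and all $x\in\bd\Omega$. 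Comparing with $\sigma(r,x)\sim\frac12+\sum_{k\ge0}\sigma^{(k)}(x)r^{k+1}$ and using uniqueness of asymptotic expansions, every invariant $\sigma^{(k)}(x)$ vanishes identically on $\bd\Omega$; in particular so does the leading one, $\sigma^{(0)}(x)$.

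It then remains to compute $\sigma^{(0)}$ and to see that it detects the mean curvature. This is a purely local calculation: fixing $x\in\Sigma=\bd\Omega$ and writing $\Sigma$ near $x$ as a graph over its affine tangent hyperplane at $x$ with vanishing $1$-jet, one estimates the $(n-1)$-volume of $\Omega\cap\bd B_r(x)$ against that of $\bd B_r(x)$. To leading order the sphere is cut by $T_x\Sigma$ into two equal halves — this is where the value $\frac12$ comes from — and the first correction, of order $r$ in $\sigma$, is governed by the trace of the second fundamental form of $\Sigma$ at $x$; carrying the expansion to that order gives $\sigma(r,x)=\frac12+c_nH(x)\,r+O(r^2)$ with $c_n\neq0$ a dimensional constant. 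This is precisely the computation carried out in \cite{MPS06} (and, for $n=3$, already by Cimmino in \cite{Ci}). Hence $\sigma^{(0)}(x)=c_nH(x)$, and the vanishing of $\sigma^{(0)}$ from the previous step forces $H\equiv0$ on $\bd\Omega$: that is, $\bd\Omega$ is minimal.

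The main obstacle is this last computation: one has to organize the volume expansion of a spherical cap truncated by a curved hypersurface carefully enough to isolate the $O(r)$ term and — crucially — to verify that it is a \emph{nonzero} multiple of $H$ alone, so that its vanishing genuinely forces minimality rather than the vanishing of some other combination of curvature invariants. By comparison the analytic ingredients (the co-area formula, injectivity of the Laplace transform, uniqueness of Taylor expansions) are soft. An essentially equivalent route would avoid $\sigma$ altogether and instead use the small-time heat expansion for $x\in\bd\Omega$, whose first non-constant term (of order $\sqrt t$) is again a nonzero multiple of $H(x)$; in the present setup, however, the density function is the more natural vehicle, and it is the one to which the preceding discussion leads.
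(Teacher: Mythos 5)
Your proposal follows the same route the paper itself sketches for this statement: reduce to $\frac12$-uniform density via the Laplace-transform argument, expand $\sigma(r,x)$ as $r\to 0$, deduce that all coefficients $\sigma^{(k)}$ vanish, and conclude from the computation in \cite{MPS06} that $\sigma^{(0)}$ is a nonzero multiple of the mean curvature. The only cosmetic remark is that the a.e.\ identity $\sigma(r,x)=\tfrac12$ already forces the vanishing of every $\sigma^{(k)}$ by uniqueness of the asymptotic expansion along any sequence $r\to 0$ avoiding the null set, so the intermediate upgrade to an everywhere identity (which you justify by continuity, not really by the asymptotic expansion) is not needed.
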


\smallskip

We remark that minimality is a necessary condition, but definitely not a sufficient condition for the $\frac 12$-property; to get such, one restricts to domains in $\mathbb R^3$ and examines higher order terms in the expansion \eqref{cexpansion}. This started with  the pioneering work of Cimmino, who posed the question of classifying domains in $\mathbb R^3$ with property \eqref{densitycondition}. The question was first taken up by Nitsche in \cite{Ni89}, who observed that $\sigma^{(1)}=
\sigma^{(3)}=0$ for all domains, and that $\sigma^{(2)}$ is a constant multiple of the function $\Delta H+2H(H^2-K)$: but this expression vanishes for all minimal surfaces, which adds nothing new and leaves the question unsettled.  Finally in \cite{Ni95} Nitsche proved:

\begin{theorem}\label{Nitsche}

The only domains in $\mathbb R^3$ with the $\frac 12$-uniform density property have boundary congruent to the plane or the right minimal  helicoid.

\end{theorem}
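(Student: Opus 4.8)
\emph{Strategy.} The plan is to reduce the problem to the classical theory of ruled minimal surfaces. By the equivalence recalled above it suffices to treat the uniform density condition \eqref{densitycondition}, and by Theorem \ref{Thm_MinimalityBoundary} the surface $\Sigma=\bd\Omega$ is minimal; a minimal surface in $\rr^3$ is real-analytic. If $\Sigma$ is totally geodesic it is a plane and there is nothing more to prove, so assume it is not. Since $K\le 0$ on any minimal surface and $\Sigma$ is analytic, the set $\{K<0\}$ is open and dense in $\Sigma$, and at each of its points the second fundamental form has signature $(1,1)$, so exactly two transverse asymptotic directions are defined there. I would then aim to prove that through every point of $\Sigma$ passes a straight line of $\rr^3$ contained in $\Sigma$; granting this, $\Sigma$ is a ruled minimal surface, and Catalan's classical theorem, which asserts that the plane and the right helicoid are the only ruled minimal surfaces of $\rr^3$, identifies $\Sigma$ with the right helicoid. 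Conversely there is nothing to check: rotation by $\pi$ about any line lying in a plane, or in a helicoid, is an isometry of $\rr^3$ that fixes the line, preserves the surface, and interchanges $\Omega_+$ with $\Omega_-$; since every point of the plane, respectively of the helicoid, lies on such a line, \eqref{densitycondition} follows.

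\emph{Extracting the constraints.} Fix $p\in\Sigma$ with $K(p)<0$, place $p$ at the origin with $T_p\Sigma=\{z=0\}$ and the $x$- and $y$-axes along the principal directions, and write $\Sigma$ near $p$ as an analytic graph $z=f(x,y)$ with $f(0)=0$, $\nabla f(0)=0$, Hessian of $f$ at the origin equal to $\mathrm{diag}(-\kappa,\kappa)$ where $\kappa=\sqrt{-K(p)}>0$, and $f$ a solution of the minimal surface equation. Differentiating that equation recursively expresses every Taylor coefficient of $f$ in terms of a Cauchy datum, so that substituting the graph into $\abs{\Omega\cap B_r(p)}$ and integrating in polar coordinates yields, as $r\to 0^+$, an expansion of the form \eqref{cexpansion} whose coefficients $\sigma^{(k)}(p)$ are universal polynomials in the jets of $f$ at $p$, equivalently in $A,\nabla A,\nabla^2A,\dots$ at $p$. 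Condition \eqref{densitycondition} forces $\sigma^{(k)}(p)=0$ for every $k$ and every $p\in\Sigma$. Among the low-order coefficients $\sigma^{(1)}$ and $\sigma^{(3)}$ vanish identically, while $\sigma^{(0)}$ and $\sigma^{(2)}$ are multiples of $H$ and of $\Delta H+2H(H^2-K)$ and hence vanish on any minimal surface; these give back only minimality, and the genuine constraints come from $\sigma^{(4)}$ onward.

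\emph{From the constraints to a ruling.} The heart of the matter is to show that, on a minimal $\Sigma$, the system $\sigma^{(k)}\equiv 0$ for $k\ge 4$ forces $\Sigma$ to be ruled. I would first work out the leading new equation $\sigma^{(4)}\equiv 0$ on the $3$-jet of $f$; I expect it to be a nonzero constant multiple of a scalar built from $A$ and $\nabla A$ which, at a point $p$ where $K(p)<0$, vanishes precisely when one of the two asymptotic curves through $p$ osculates a straight line to third order, i.e.\ when its geodesic curvature vanishes at $p$ together with its derivative. The remaining identities $\sigma^{(6)}\equiv\sigma^{(8)}\equiv\dots\equiv 0$ should then propagate this datum along that asymptotic curve, so that, by analyticity of $f$, ``straight to every finite order at every point'' upgrades to ``a genuine straight segment of $\rr^3$ contained in $\Sigma$''. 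As $p$ ranges over the dense open set $\{K<0\}$ we obtain a line through each such point, whence $\Sigma$ is ruled on a dense open set and, being analytic, ruled; Catalan's theorem then concludes. A possible equivalent packaging of the rigidity is to deduce from the vanishing of the $\sigma^{(k)}(p)$ the existence of an ambient isometry $R_p$ with $R_p(p)=p$ and $R_p(\Omega_+)=\Omega_-$ (for a plane or a helicoid this is the rotation by $\pi$ about a line of $\Sigma$ through $p$); the fixed locus of $R_p$ meets $\Sigma$ in a curve which, by the same analyticity argument, must be a straight line through $p$, and again $\Sigma$ is ruled.

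\emph{Main obstacle.} The difficult step is exactly the last one: extracting from the infinitely many, but highly structured, polynomial identities $\sigma^{(k)}\equiv 0$ the clean geometric conclusion that $\Sigma$ contains a line through each of its points. Computing the invariants $\sigma^{(k)}$ from the minimal surface equation is lengthy but algorithmic; the two delicate points are (i) recognizing the correct combination of jets of $A$ as the exact obstruction to the existence of such a line, and (ii) organizing the higher identities into an ODE along the candidate asymptotic curve so that analyticity delivers a true straight line rather than merely a high-order osculation. A further, more bookkeeping-type difficulty is the patching of the local line-germs into a single global ruling and the exclusion of degenerate configurations where the roles of the two asymptotic families interchange from point to point; density of $\{K<0\}$ and real-analyticity of $\Sigma$ take care of this, and then Catalan's rigidity does the rest.
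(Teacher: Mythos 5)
The theorem you are asked to prove is, in the paper, a \emph{citation} of Nitsche's 1995 result; the paper does not reprove it by the density-expansion route you outline, but instead develops an independent route (Cauchy temperature $\leftrightarrow$ Dirichlet temperature, then the heat-flow invariants $\gamma_k$, then the divergence condition, then Theorem~\ref{Thm_oneofthevariables}). Your proposal follows the spirit of Nitsche's original argument, but it has a genuine gap exactly where the mathematical content lies.

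Concretely: the reduction to minimality and the appeal to Catalan's theorem once ruledness is known are both correct and standard. What is missing is the step you yourself flag as ``the heart of the matter'': turning the constraint $\sigma^{(4)}\equiv 0$ on a minimal surface into the statement that $\Sigma$ is ruled. You write ``I would first work out the leading new equation $\sigma^{(4)}\equiv 0$ \ldots\ I expect it to be a nonzero constant multiple of a scalar \ldots\ which vanishes precisely when one of the two asymptotic curves through $p$ osculates a straight line to third order,'' and then ``the remaining identities $\sigma^{(6)}\equiv\sigma^{(8)}\equiv\cdots\equiv 0$ should then propagate this datum.'' None of this is carried out, and the heuristic is not quite right even qualitatively: the paper's computation of the analogous heat-flow invariant gives $\gamma_4=\frac{5}{16}\,\mathrm{div}\bigl(S(\bar\nabla K)\bigr)$, a fourth-order invariant whose vanishing is a \emph{divergence-form PDE on $\Sigma$}, not a pointwise third-order osculation condition; and the passage from that PDE to ``$\Sigma$ is geodesically ruled'' is accomplished not by propagating jet data along an asymptotic curve, but by switching to isothermal coordinates whose coordinate lines are asymptotic lines, deriving $EE_{uv}-4E_uE_v=0$ from the divergence condition, combining it with the Gauss equation $\Delta\log E=2\sigma E - 2/E$, and then invoking the rigidity Lemma on solutions of $\Delta\log P=\phi(P)$, $P_{uv}=0$ (Theorem~\ref{Thm_oneofthevariables}) to conclude that $E$ depends on one variable. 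Moreover, Nitsche's argument (and the paper's) uses only one nontrivial invariant ($\sigma^{(4)}$, resp.\ $\gamma_4$); your sketch appears to rely on the whole tower $\sigma^{(k)}$, $k\ge 4$, which is not how the classification is obtained. As written, the proposal is a plausible roadmap but not a proof, and the hard analytic/geometric lemma it presupposes is left entirely undone.
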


This was achieved by proving, by a rather delicate argument, that $\sigma^{(4)}$ vanishes only when $\bd\Omega$ is a plane or the minimal helicoid, which finally answered Cimmino's question; taking into account the above equivalence, Theorem \ref{MPS} is then proved.

We will actually classify $\frac 12$-uniformly dense domains in all $3$-dimensional space forms (see Theorem \ref{Nitscheg}).

\subsection{The strategy of proof; the Dirichlet temperature and the divergence condition}\label{Subsec:Approach} Unlike the procedure we have seen in $\mathbb R^n$, which makes use of the equivalence between the $\frac 12$-property and the $\frac12$-uniform density, we directly work with the Cauchy temperature $u_C(t,x)$ of a domain in a general stochastically complete manifold $M$, by relating it with what we call the {\it Dirichlet temperature} function $u_D(t,x)$,  that is, the temperature at time $t>0$, at the point $x\in\Omega$ assuming that the initial temperature distribution is uniformly equal to $1$ on $\Omega$, and assuming Dirichlet boundary conditions: 
\begin{align}\label{Eq:DirichletTemperature}
        \begin{cases}
            \left(\frac{\partial}{\partial t} + \Delta\right) u_D=0 & \inn (0,+\infty)\times \Omega\\
            u_D=1 & \onn \{0\}\times\Omega\\
            u_D=0 & \onn (0,+\infty)\times \partial \Omega.
        \end{cases}
\end{align}

In fact we show that, if $\Omega$ has the $\frac 12$-property, then on $\Omega$ one has the identity:
\begin{equation}\label{ucud}
u_D=2u_C-1.
\end{equation}  
It is then proved that the {\it Dirichlet heat flow}, that is, the function on $(0,\infty)\times\bd\Omega$ given by
$\derive{u_D}{N}(t,x)$ (here $N$ is the inner unit normal vector to $\bd\Omega$) has an asymptotic expansion, as $t\to 0$, of type:
$$
\frac{\partial u_D}{\partial N} (t,x) \sim \frac{1}{\sqrt{\pi}} t^{-\frac{1}{2}}+ \sum_{k\geq 0} \gamma_k(x) t^{\frac{k}{2}} \quad as\ t\to 0
$$
for certain computable smooth invariants $\gamma_k\in C^{\infty}(\bd\Omega)$. Condition \eqref{ucud} is seen to force all even invariants to vanish:
$$
\gamma_{2n}=0 \quad\text{for all $n=0,1,2,\dots$}
$$
The condition $\gamma_0=0$ implies that $\bd\Omega$ is minimal (in any stochastically complete manifold) thus proving Theorem \ref{Thm_MinimalityBoundary}. When $\Omega$ is a domain in a $3$-dimensional space-form,  the condition $\gamma_4=0$ implies what we call the {\it divergence condition} on the boundary of the domain, namely the following geometric PDE
\begin{equation}\label{divcondition}
{\rm div}(S(\bar\nabla K))=0
\end{equation}
where $\bar\nabla$ is the gradient, $S$ is the shape operator and $K$ is the Gaussian curvature of $\bd\Omega$. 

\smallskip

Examination of the divergence condition on minimal surfaces of space-forms leads to the fact that $\bd\Omega$ is geodesically ruled and, by known results, $\bd\Omega$ is then totally geodesic or an embedded minimal helicoid. On the other hand, the symmetries of these particular surfaces imply that the two domains they bound have the $\frac 12$-property. 
Here is the conclusion, which implies in particular the main theorem, Theorem \ref{Thm_Main}.

\begin{theoremA}\label{divergence} Let $M^3_{\sigma}$ be a $3$-dimensional space-form and $\Omega\subseteq M_{\sigma}$ a smooth domain. The following are equivalent:

a) $\Omega$ has the $\frac 12$-property;

b) $\bd\Omega$ is minimal and satisfies the divergence condition \eqref{divcondition};

c) $\bd\Omega$ is totally geodesic or an embedded minimal helicoid, as in the list in Theorem \ref{Thm_Main}.
\end{theoremA}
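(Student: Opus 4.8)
The plan is to prove the cycle of implications $a)\Rightarrow b)\Rightarrow c)\Rightarrow a)$. The implication $a)\Rightarrow b)$ rests entirely on the asymptotic machinery outlined in Section~\ref{Subsec:Approach}. First I would establish the identity \eqref{ucud}: if $\Omega$ has the $\frac12$-property, then $w=2u_C-1$ solves the heat equation on $(0,\infty)\times\Omega$ with initial datum $1$ and boundary value $0$ on $(0,\infty)\times\bd\Omega$; by uniqueness for the Dirichlet problem \eqref{Eq:DirichletTemperature} one gets $w=u_D$, hence $\derive{u_D}{N}=2\derive{u_C}{N}$ on the boundary. Next I would invoke the small-time asymptotic expansion of the Dirichlet heat flow $\derive{u_D}{N}(t,x)\sim\frac{1}{\sqrt\pi}t^{-1/2}+\sum_{k\ge0}\gamma_k(x)t^{k/2}$, whose coefficients $\gamma_k$ are universal polynomials in the second fundamental form of $\Sigma$, its covariant derivatives, and the ambient curvature; this is standard heat-kernel-on-a-domain technology (Kac-type expansions, or the parametrix construction near the boundary). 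The $\frac12$-property forces $u_C(t,x)\equiv\frac12$ on the boundary, so $\derive{u_C}{N}(t,x)$ has an expansion with no $t^{-1/2}$ term issue beyond what is already present; the crucial point is that comparing the two sides of $\derive{u_D}{N}=2\derive{u_C}{N}$ together with the symmetry $u_C+u_C^{\,c}=1$ (where $u_C^{\,c}$ is the Cauchy temperature of the complementary domain $\Omega_-$, which flips the sign of $N$) kills every even-index coefficient: $\gamma_{2n}=0$ for all $n$. Then $\gamma_0=0$ gives minimality (this is Theorem~\ref{Thm_MinimalityBoundary}, already available), and in the $3$-dimensional space-form setting $\gamma_4=0$ is, after inserting $H=0$ and using the Gauss equation $K=\sigma+\det S$ to trade the intrinsic Gaussian curvature for the extrinsic data, precisely the divergence condition \eqref{divcondition}: ${\rm div}(S(\bar\nabla K))=0$.

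The implication $b)\Rightarrow c)$ is the geometric heart of the argument. Here $\Sigma=\bd\Omega$ is a complete minimal surface in $M^3_\sigma$ satisfying ${\rm div}(S(\bar\nabla K))=0$. The strategy is to show this PDE forces $\Sigma$ to be \emph{geodesically ruled}: through every point passes an ambient geodesic lying on $\Sigma$. I would argue pointwise near a non-totally-geodesic point (where $K-\sigma=-\det S=\abs{S}^2/2\ne0$, the inequality coming from $H=0$): diagonalize $S$ with eigenvalues $\pm\lambda$, so $\bar\nabla K=-\bar\nabla(\det S)$ and $S(\bar\nabla K)$ becomes an explicit vector field in the principal frame; expanding ${\rm div}(S(\bar\nabla K))=0$ using the Codazzi equations (which on a minimal surface of a space-form take a particularly rigid form, constraining $\bar\nabla\lambda$) should yield that $\lambda$, equivalently $K$, is constant along one of the two principal directions, and that this direction integrates to a geodesic of the ambient space. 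Once $\Sigma$ is known to be a complete geodesically ruled minimal surface in $M^3_\sigma$, I would cite the classification of such surfaces: in $\rr^3$ they are the plane and the right helicoid (Catalan); in $\sss^3$ the totally geodesic $\sss^2$ and the Clifford torus (the latter is ruled by great circles — see the cited Remark~\ref{Rmk_SphericalHelicoids}); in $\hh^3$ the totally geodesic $\hh^2$ and the one-parameter family of embedded minimal hyperbolic helicoids $X^\alpha$, $\alpha>0$, studied in \cite{dCD83, Mor81, Mor82, Wa19}. This gives exactly the list in Theorem~\ref{Thm_Main}.

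The implication $c)\Rightarrow a)$ is the easiest: each surface on the list admits an ambient involutive isometry fixing $\Sigma$ pointwise and exchanging the two components $\Omega_+$ and $\Omega_-$. For a totally geodesic hypersurface this is the geodesic reflection across it. For the right helicoid, the Clifford torus, and each hyperbolic helicoid $X^\alpha$, one exhibits such an involution explicitly from the given parametrizations (for the helicoids, a screw-motion composed with a reflection, or the reflection across the line/axis of symmetry; for the Clifford torus, the isometry $(z,w)\mapsto(w,z)$ of $\sss^3\subset\cc^2$). As noted in Section~\ref{Sec:SufficientConditions} and just after the definition of the $\frac12$-property in the introduction, the existence of such an involution immediately forces $u_C(t,\cdot)$ to equal $\frac12$ on $\bd\Omega$ for all $t>0$, by applying the isometry to problem \eqref{Prob:Cauchy} and using uniqueness of bounded solutions (guaranteed by stochastic completeness of $M^3_\sigma$).

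The main obstacle is the step $b)\Rightarrow c)$: extracting ``geodesically ruled'' from the single scalar equation ${\rm div}(S(\bar\nabla K))=0$. The difficulty is that this PDE is much weaker than, say, a pointwise identity, so one cannot proceed purely algebraically; one must combine it with the full force of the Codazzi and Gauss equations for minimal surfaces in space-forms and carefully analyze the structure of the nodal set $\{K=\sigma\}$ (the totally geodesic locus). A delicate point is ruling out exotic complete solutions that are neither totally geodesic nor globally ruled — or, more precisely, showing that the local ruled structure extends across the totally geodesic locus to give a genuine global helicoid rather than some pieced-together object; this is where completeness of $\Sigma$ and connectedness (both among our standing assumptions on $\Omega$) must be used essentially, and where the cited fine structure results on helicoidal surfaces in \cite{dCD83, Wa19} do the remaining work.
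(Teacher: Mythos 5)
Your overall cycle $a)\Rightarrow b)\Rightarrow c)\Rightarrow a)$ is the right structure, and the step $a)\Rightarrow b)$ matches the paper exactly (identity $u_D=2u_C-1$ via the uniqueness theorem, the heat-flow expansion, vanishing of even $\gamma_k$ by comparing $\Omega_+$ with $\Omega_-$, $\gamma_0=0$ giving minimality, $\gamma_4=0$ giving the divergence condition). The remaining two steps, however, contain one genuine route difference and one genuine gap.

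For $b)\Rightarrow c)$ you propose to work in a principal frame, diagonalize $S$, and use Codazzi to extract constancy of $\lambda$ ``along one of the two principal directions, and that this direction integrates to a geodesic of the ambient space.'' This last clause is geometrically off: on a minimal, non--totally--geodesic surface of a space-form, an embedded curve that is also an ambient geodesic is necessarily an \emph{asymptotic} line (zero second fundamental form along it), whereas along a principal direction the normal curvature is $\pm\lambda\neq 0$. The rulings you are hunting for live at $45^\circ$ to the principal directions. The paper avoids this tangle by passing directly to Nitsche's isothermal coordinates whose coordinate lines are asymptotic, where the first and second fundamental forms take the normal form $g=E\,I$, $l=\bigl(\begin{smallmatrix}0&1\\1&0\end{smallmatrix}\bigr)$, the divergence condition reduces to the single scalar identity $EE_{uv}-4E_uE_v=0$ (equivalently $(E^{-3})_{uv}=0$), and a Taylor-series argument (Theorem~\ref{Thm_oneofthevariables}) on the accompanying Liouville-type PDE forces $E$ to depend on one variable, whose level curves $v=\mathrm{const}$ are then seen to be both intrinsic geodesics and asymptotic lines, hence ambient geodesics. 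Your Codazzi-in-a-principal-frame route may be salvageable but, as written, it misidentifies which foliation becomes the ruling and leaves the key PDE manipulation unspecified; the paper's change of coordinates is what turns the problem into a tractable scalar ODE/PDE statement.

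For $c)\Rightarrow a)$ there is a real gap. You assert that ``each surface on the list admits an ambient involutive isometry fixing $\Sigma$ pointwise and exchanging $\Omega_+$ and $\Omega_-$,'' and invoke the simple reflection criterion. But the fixed-point set of a nontrivial isometry of $M^3_\sigma$ is totally geodesic, so no isometry fixes the right helicoid, the Clifford torus, or a hyperbolic helicoid pointwise; the involutions you name (rotation by $\pi$ about the helicoid axis, $(z,w)\mapsto(w,z)$ on $\sss^3$) fix only a one-dimensional subset of $\Sigma$, and the reflection argument only yields $u_C=\tfrac12$ at those special points. The correct mechanism (and the one the paper uses, Theorem~\ref{Lem_12_RototranslationsReflection}) is to combine the swapping isometry $\Psi$ with the one-parameter \emph{helicoidal} subgroup $\mathcal T$ of isometries preserving both sides: for each $x\in\Sigma$ one finds $T\in\mathcal T$ with $\Psi(x)=T(x)$, whence $u_+(t,x)=u_+(t,T(x))=u_+(t,\Psi(x))=u_-(t,x)$ and $u_+(t,x)=\tfrac12$. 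Without this group of helicoidal symmetries, your $c)\Rightarrow a)$ establishes the $\tfrac12$-property only on a curve in $\partial\Omega$, not on all of it.
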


Finally, the definition of $\frac 12$-uniform density extends to domains in space-forms. A posteriori, we remark the following fact, which reproves Nitsche result \cite{Ni95} and extends it to domains in space-forms. Its proof is postposed in Appendix \ref{Appendix:Nitsche}

\begin{theoremA}\label{Nitscheg} Let  $\Omega\subseteq M^3_{\sigma}$.  Then $\Omega$ is $\frac 12$-uniformly dense in $\bd\Omega$ if and only if its boundary is totally geodesic or an embedded minimal helicoid. 
\end{theoremA}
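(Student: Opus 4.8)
The plan is to deduce Theorem \ref{Nitscheg} from the classification already obtained in Theorem \ref{Thm_Main} (equivalently Theorem \ref{divergence}), by establishing, in each $3$-dimensional space-form, the equivalence between the $\frac12$-uniform density condition \eqref{densitycondition} and the $\frac12$-property. The only feature special to space-forms that enters is that $M^3_\sigma$ is two-point homogeneous: its heat kernel is radial, $p_\sigma(t,x,y)=p_\sigma(t,d(x,y))$, and the area $A_\sigma(r)=\abs{\bd B_r(x)}$ of a geodesic sphere of radius $r$ does not depend on the center $x$.

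First I would record the basic representation formula. Fix $x\in\bd\Omega$. Writing $u_C(t,x)=\int_M p_\sigma(t,d(x,y))\chi_\Omega(y)\dvol_y$ and applying the co-area formula to the $1$-Lipschitz function $y\mapsto d(x,y)$, whose gradient has unit norm a.e., one gets
\[
u_C(t,x)=\int_0^{\infty}p_\sigma(t,r)\,\abs{\Omega\cap\bd B_r(x)}\,dr=\int_0^{\infty}p_\sigma(t,r)\,A_\sigma(r)\,\sigma(r,x)\,dr .
\]
If $\Omega$ is $\frac12$-uniformly dense in $\bd\Omega$, so that $\sigma(r,x)=\frac12$ for a.e. $r>0$, this yields
\[
u_C(t,x)=\frac12\int_0^{\infty}p_\sigma(t,r)\,A_\sigma(r)\,dr=\frac12\int_M p_\sigma(t,d(x,y))\dvol_y=\frac12\qquad\text{for all }t>0,
\]
the last equality being the conservativeness $\int_M p_\sigma(t,\cdot)=1$ of the stochastically complete manifold $M^3_\sigma$. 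Hence $\Omega$ has the $\frac12$-property and, by Theorem \ref{Thm_Main}, $\bd\Omega$ is totally geodesic or an embedded minimal helicoid. This is the ``only if'' direction.

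For the converse I would argue by symmetry. If $\bd\Omega$ is one of the three surfaces in Theorem \ref{Thm_Main}, then for every $x\in\bd\Omega$ there is an isometry $\iota_x$ of $M^3_\sigma$ with $\iota_x(x)=x$ and $\iota_x(\Omega_+)=\Omega_-$: for a totally geodesic $\Sigma$ one takes the geodesic reflection across $\Sigma$; for the Clifford torus $\{\abs{z_1}=\abs{z_2}\}\subset\sss^3\subset\mathbb{C}^2$ one composes the factor exchange $(z_1,z_2)\mapsto(z_2,z_1)$ with a diagonal rotation chosen so that $x$ is fixed; for an embedded minimal helicoid one takes the rotation by angle $\pi$ about the ruling geodesic through $x$, using the geodesic ruling together with the screw-symmetry of these surfaces --- exactly the symmetries used to prove the implication c)$\Rightarrow$a) in Theorem \ref{divergence} (see also Section \ref{Sec:SufficientConditions}). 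Since $\iota_x$ fixes $x$ it maps each sphere $\bd B_r(x)$ onto itself, whence $\abs{\Omega_+\cap\bd B_r(x)}=\abs{\Omega_-\cap\bd B_r(x)}$ for all $r>0$; as $\bd B_r(x)$ is the disjoint union of $\Omega_+\cap\bd B_r(x)$, $\Omega_-\cap\bd B_r(x)$ and $\Sigma\cap\bd B_r(x)$, one has $\sigma(r,x)=\tfrac12-\tfrac{\abs{\Sigma\cap\bd B_r(x)}}{2A_\sigma(r)}$, and $\abs{\Sigma\cap\bd B_r(x)}=0$ for a.e. $r>0$ (an elementary consequence of the co-area formula on $\Sigma$, since $\Sigma\cap\bd B_r(x)$ has finite $1$-dimensional measure for a.e. $r$). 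Thus $\sigma(r,x)=\frac12$ for a.e. $r>0$; that is, $\Omega$ is $\frac12$-uniformly dense in $\bd\Omega$.

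I expect the only genuine point requiring care to be the verification of the symmetries $\iota_x$ for the hyperbolic helicoids $X^\alpha(\rr^2)$: one must check that rotation by $\pi$ about a ruling geodesic carries the surface onto itself and interchanges the two components of $\hh^3\setminus X^\alpha(\rr^2)$. This is cleanest in the hyperboloid model, where the one-parameter screw group and the rulings are read off directly from the parametrization $X^\alpha$; as a by-product it re-establishes that these surfaces do give the $\frac12$-property. An alternative route, bypassing Theorem \ref{Thm_Main}, would be to carry out a Nitsche-type analysis of the Taylor coefficients in the space-form expansion $\sigma(r,x)\sim\frac12+\sum_{k}\sigma^{(k)}(x)r^{k+1}$, showing that $\sigma^{(0)}$ is a nonzero multiple of the mean curvature and, modulo minimality, $\sigma^{(4)}$ a nonzero multiple of $\mathrm{div}(S(\bar\nabla K))$, and then invoking Theorem \ref{divergence}; but the heat-kernel bridge above is shorter, since it reuses the classification already in hand.
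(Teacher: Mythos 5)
Your proof is correct and follows essentially the same route as the paper's: the forward implication uses the radiality of the heat kernel on $M^3_\sigma$ together with conservativeness to reduce $\frac12$-uniform density to the $\frac12$-property and then invokes Theorem~\ref{Thm_Main}, while the converse uses the isometries from Section~\ref{Sec:SufficientConditions} that swap $\Omega_+$ and $\Omega_-$ and can be arranged to fix the given boundary point. The only cosmetic difference is that the paper phrases the converse in terms of ball volumes and then invokes Lebesgue differentiation, whereas you work directly with sphere areas and dispose of the $\Sigma\cap\bd B_r(x)$ contribution by a separate co-area remark.
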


\subsection{Related results on other overdetermined heat equations}
Previous work involves the study of the Dirichlet temperature function $u_D(t,x)$ defined in \ref{Eq:DirichletTemperature}.

A hypersurface $\Gamma\subseteq\Omega$ is called {\it isothermic at time $t_0$} if
$$
u_D(t_0,x)=c \quad\text{for all $x\in\Gamma$}
$$
where $c$ is a constant (possibly depending on time $c=c(t_0)$). $\Gamma$ is said to be {\it stationary isothermic} if it is isothermic at all times $t_0>0$. For bounded domains in $\mathbb R^n$, it was conjectured (\textit{Matzoh ball soup} \cite{Kl64, Za87}) that if all isothermic hypersurfaces are stationary, then $\Omega$ must be a ball. The conjecture was proved by Alessandrini in \cite{Al90}; 
a remarkable improvement was achieved by Magnanini and Sakaguchi \cite{MS02} where the same conclusion is obtained only assuming the existence of one stationary isothermic hypersurface. 

\smallskip

Let $N$ be the inner unit normal to the (bounded) domain $\Omega$ of an arbitrary Riemannian manifold $M$. The normal derivative of the Dirichlet temperature
$\derive{u_D}{N}(t,x)$
at a point $x\in\bd\Omega$ is called the {\it Dirichlet heat flow} at $x$: it measures the rate at which $\Omega$ is loosing heat near $x$. If this function is spatially constant on $\bd\Omega$  at every fixed time $t$
$$
\derive{u_D}{N}(t,x)=\psi(t)\quad\text{for all $x\in\bd\Omega$}
$$
for a smooth function $\psi=\psi(t)$, then we say that $\Omega$ has the {\it constant flow property}. The structure of domains (in an analytic manifold $M$) having the constant flow property has been clarified by the second author in \cite{Sa16} and \cite{Sa18}. It turns out that these domains are {\it isoparametric tubes}, in the sense that they admit a singular Riemannian foliation whose regular leaves are parallel hypersurfaces of constant mean curvature, whose singular variety (in fact, the hot spot of $u_D$) is a smooth minimal submanifold at constant distance to $\bd\Omega$. In proving that result, a crucial role is played by the asymptotic expansion of the Dirichlet heat flow as $t\to 0$, developped in \cite{Sa04},  which will be generalized in this paper to the unbounded case and used in the proof.

\smallskip

Concerning the Cauchy temperature $u_C$, extensive work when $M=\mathbb R^3$ was done in \cite{MPSS16} and \cite{CSU23}. In particular, classification of unbounded domains admitting stationary isothermic surfaces was done in the paper \cite{MPSS16}, where the results rely on fine properties of minimal surfaces of $\mathbb R^3$. 
Finally, interesting generalizations of the above problems to two-phase conductors has been done in \cite{CSU23, Sak20, Sak24}.

\color{black}

\subsection{Scheme of the paper}\label{scheme} In Section \ref{mpt} we state two important general results on the heat equation on unbounded smooth domains of complete manifolds, which will be used in the main theorem. In Section \ref{Sec:SufficientConditions} we prove that domains bounded by an embedded minimal helicoid (or a totally geodesic surface) do have the $\frac 12$-property. In Section \ref{Sec:Rigidity} we relate the Cauchy and Dirichlet temperatures of $\frac 12$-domains, and prove that in a stochastically complete manifold $\frac 12$-domains have minimal boundary.  Section \ref{Sec:divergencecondition} consists in the main step of the proof: by using the asymptotic expansion of the Dirichlet heat flow, one gets that the boundary of a $\frac 12$-domain must satisfy the divergence condition \eqref{divcondition}. Section \ref{rigiditydc} is devoted to the classification of minimal surfaces having the divergence condition, which concludes the proof of the main theorem. The Appendix is devoted to the proof of the theorems of Section \ref{mpt}.

\section{Main preparatory theorems}\label{mpt}

There are two key ingredients for proving the main theorem, which are of independent interest. The first is the asymptotic expansion of the pointwise Dirichlet heat flow of (possibly unbounded) smooth domains of geodesically complete Riemannian manifolds.

\begin{theoremA}\label{Thm:UnboundedSavoExpansion}
Let $(M,\scal{\cdot}{\cdot})$ be a geodesically complete Riemannian manifold without boundary and $\Omega\subset M$ a (possibly unbounded) smooth domain. Denote by $u_D$ the Dirichlet temperature of $\Omega$ with initial data $u_0\in C^\infty(\Omega)$, i.e. the solution to
\begin{align}\label{Eq:DirichletTemperature}
        \begin{cases}
            \left(\frac{\partial}{\partial t} + \Delta\right) u_D=0 & in\ (0,+\infty)\times \Omega\\
            u_D=1 & on\ \{0\}\times\Omega\\
            u_D=0 & on\ (0,+\infty)\times \partial \Omega.
        \end{cases}
\end{align}
Then, the following asymptotic expansion holds for every $x\in \partial \Omega$
\begin{align}\label{Eq:UnboundedSavoExpansion}
\frac{\partial u_D}{\partial N} (t,x) \sim \frac{1}{\sqrt{\pi}} t^{-\frac{1}{2}}+ \sum_{k\geq 0} \gamma_k(x) t^{\frac{k}{2}} \quad as\ t\to 0
\end{align}
for certain smooth invariants $\gamma_k\in C^{\infty}(\bd\Omega)$, which can be computed by an explicit recursive formula.
\end{theoremA}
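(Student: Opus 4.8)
The plan is to reduce the statement to the asymptotic expansion for \emph{bounded} domains of \cite{Sa04} by a localization argument, and then to read off the recursion for the $\gamma_k$ from the boundary-layer construction underlying it. Throughout, $u_D$ denotes the natural bounded solution of \eqref{Eq:DirichletTemperature}, namely $u_D(t,x)=\int_\Omega p^D_\Omega(t,x,y)\,\dvol_y$, where $p^D_\Omega$ is the Dirichlet heat kernel of $\Omega$ (the minimal heat kernel of the Friedrichs Laplacian). On a geodesically complete $M$ one has, on a fixed ball $B_r(x)$, the off-diagonal bound $p^D_\Omega(t,\cdot,\cdot)\le p_M(t,\cdot,\cdot)\le C\,t^{-n/2}e^{-d(\cdot,\cdot)^2/(5t)}$, by domain monotonicity together with the local volume, Poincaré and parabolic-regularity estimates that completeness provides. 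The crux is the following localization lemma: if two smooth domains $\Omega_1,\Omega_2$, of possibly different complete manifolds, are isometric on a neighbourhood $U$ of a common boundary point $x$, with $\Omega_1\cap U=\Omega_2\cap U$, then
\[\frac{\partial u_{D,1}}{\partial N}(t,x)-\frac{\partial u_{D,2}}{\partial N}(t,x)=O(t^{\infty})\qquad\text{as }t\to 0.\]

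To prove the lemma, set $w=u_{D,1}-u_{D,2}$; after extending it across $U$, $w$ solves the heat equation there with initial datum $0$ on $U\cap\Omega_1=U\cap\Omega_2$, and is globally bounded. Representing $w$ by Duhamel's formula against a cutoff supported in the region where the two configurations differ — a set at a fixed positive distance from $x$ — and invoking the Gaussian bound above yields $|w(t,x)|\le C_N t^{N}$ for every $N$; interior and boundary parabolic Schauder estimates then upgrade this to the same control for $\bar\nabla w$, which is what the normal derivative needs. Granting the lemma, one may modify $\Omega$ outside a small ball $B_r(x)$: smoothing $\Omega\cap B_r(x)$ near $\partial B_r(x)$ produces a relatively compact smooth domain $\Omega'\subset M$ with $\Omega'\cap B_{r/2}(x)=\Omega\cap B_{r/2}(x)$, and the lemma applies on $U=B_{r/4}(x)$. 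Since the expansion \eqref{Eq:UnboundedSavoExpansion} for the bounded domain $\Omega'$ at $x$ is \cite{Sa04}, and its coefficients are universal local invariants of the germ at $x$ of $\partial\Omega$ together with its shape operator and the ambient curvature, the same expansion, with the same $\gamma_k(x)$, holds for $\Omega$.

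For the explicit recursion I would recall the boundary-layer construction in Fermi coordinates $(y,\rho)$ along $\Sigma=\partial\Omega$, with $\rho$ the inner distance, in which the metric is $d\rho^2+g_\rho$ and $\Delta=-\partial_\rho^2-\tfrac12\,\partial_\rho\!\big(\log\det g_\rho\big)\partial_\rho+\Delta_{g_\rho}$, all coefficients being smooth in $\rho$ with Taylor coefficients built from the shape operator $S$, the Gaussian curvature $K$ and the curvature of $M$. One seeks a formal solution $u_D(t,y,\rho)\sim\sum_{k\ge0}t^{k/2}u_k(y,\xi)$ with $\xi=\rho/\sqrt t$; substituting and collecting powers of $t$ gives $\big(\partial_\xi^2+\tfrac{\xi}{2}\partial_\xi\big)u_k=F_k$, where $F_0=0$ and $F_k$ is an explicit expression in $u_0,\dots,u_{k-1}$ and finitely many metric Taylor coefficients, subject to $u_k(y,0)=0$ and $u_k(y,\xi)\to0$ (respectively $\to1$ when $k=0$) as $\xi\to+\infty$, the last condition being the matching to the bulk value $1$. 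Each of these ODEs has a unique admissible solution of the form $u_k(y,\xi)=P_k(y,\xi)\,e^{-\xi^2/4}+c_k(y)\,\mathrm{erf}(\xi/2)$ with $P_k$ a polynomial, all computable recursively; in particular $u_0=\mathrm{erf}(\xi/2)$. Differentiating termwise, $\frac{\partial u_D}{\partial N}(t,y,\rho)=\sum_k t^{(k-1)/2}\,\partial_\xi u_k(y,\rho/\sqrt t)$, and evaluating at $\rho=0$ gives the leading term $\tfrac{1}{\sqrt\pi}t^{-1/2}$ together with $\gamma_k(y)=\partial_\xi u_{k+1}(y,0)$ — the asserted explicit recursive formula; $\gamma_0$ comes out a nonzero multiple of the mean curvature of $\Sigma$.

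The main obstacle is exactly the passage from this \emph{formal} boundary-layer series to a genuine asymptotic expansion of the \emph{normal derivative} without the compactness of $\overline\Omega$: one must show that a truncation $\sum_{k\le2N}t^{k/2}u_k(y,\rho/\sqrt t)$, cut off away from the collar, solves \eqref{Eq:DirichletTemperature} up to an error that is $O(t^{N})$ in a norm strong enough to dominate one normal derivative at the boundary, and then close the estimate through the maximum principle and Duhamel's formula against the Gaussian heat-kernel bound. In \cite{Sa04} compactness makes all these estimates uniform for free; here that uniformity must instead come from geodesic completeness — local volume doubling and Poincaré inequalities, hence local parabolic Schauder estimates and Gaussian bounds — while the localization lemma confines every estimate to a fixed neighbourhood of $x$, so that no hypothesis on $\Omega$ beyond smoothness of its boundary is used.
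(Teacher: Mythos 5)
Your proposal follows essentially the same route as the paper: localize to a bounded domain near the given boundary point, invoke the bounded-domain expansion of \cite{Sa04}, and control the difference by Duhamel's principle together with rapid off-diagonal decay of the Dirichlet heat kernel. The paper's execution is a bit more economical in two respects. First, rather than constructing a new bounded domain $\Omega'$ and comparing two a priori unrelated solutions, the paper takes a bounded $V\subset\Omega$ whose boundary contains a piece of $\partial\Omega$ near $\bar x$, and applies the compact Duhamel principle on $V$ with $w=u_\Omega$ and the Dirichlet heat kernel $k_V$ of $V$. Since $k_V$ vanishes on $\partial V$ and $u_\Omega$ vanishes on the part of $\partial V$ that lies in $\partial\Omega$, the representation collapses to an exact identity
\begin{align}
u_\Omega(t,x)-u_V(t,x)=\int_0^t\int_A\frac{\partial k_V}{\partial N_y}(t-s,x,y)\,u_\Omega(s,y)\,\dsup_y\,\dint{s},
\end{align}
with $A=\overline{\Omega\cap\partial V}$ at a positive distance $c$ from $\bar x$. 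Second, because the integrand is smooth for $y\in A$ and $x$ near $\bar x$, one can differentiate directly under the integral sign in the normal direction at $\bar x$; the rapid off-diagonal decay of $k_V$ and the boundedness of $u_\Omega$ then give $O(t^m)$ for every $m$ without invoking boundary parabolic Schauder estimates as an intermediate upgrading step. Your "localization lemma" for two locally isometric domains is a more general packaging of the same idea, but the statement "after extending it across $U$, $w$ solves the heat equation" is imprecise as written and the Schauder step carries the extra burden of tracking $t$-dependence in the constants; the Duhamel identity sidesteps both. Finally, your boundary-layer construction in $\xi=\rho/\sqrt t$ is a legitimate account of where the recursion for the $\gamma_k$'s comes from, but the paper simply refers to Definition 2.5 of \cite{Sa04}, where they are defined as $\gamma_k=(1+k/2)D_{k+2}1|_{\partial\Omega}$ via the recursion for the operators $D_k$ built from $\mathcal N$ and $\Delta$ — so that part of your write-up is supplementary rather than required.
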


The theorem extends Theorem 2.1 in \cite{Sa04}, proved when $\Omega$ is bounded. The recursive formula computing the $\gamma_k$'s is given in Definition 2.5 in \cite{Sa04}. 
The  proof of Theorem \ref{Thm:UnboundedSavoExpansion} is postponed in Appendix \ref{Sec:AppendixC}. 

\smallskip

The second key ingredient for the proof of Theorem \ref{Thm_MinimalityBoundary} is a uniqueness result for the Dirichlet problem for the heat equation. This property will be necessary in order to compare the Dirichlet and Cauchy temperatures of a fixed $\frac{1}{2}$-domain. 

\begin{theoremA}\label{Thm_UniquenessDirichletAppendix}
    Let $(M,\scal{\cdot}{\cdot})$ be a stochastically complete Riemannian manifold and $\Omega\subseteq M$ a (possibly unbounded) smooth domain. Let $v\in C^0([0,+\infty)\times \Omega)$ be a bounded distributional solution to the following boundary value problem
    \begin{align}\label{Eq_UniquenessAppendix}
        \begin{cases}
            \left(\frac{\partial}{\partial t} + \Delta\right) v=0 & in\ (0,+\infty)\times \Omega\\
            v=0 & on\ \{0\}\times\Omega\\
            v=0 & on\ (0,+\infty)\times \partial \Omega.
        \end{cases}
    \end{align}
Then, $v$ is the constant null function.
\end{theoremA}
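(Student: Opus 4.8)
The plan is to deduce this from the parabolic maximum principle for bounded subsolutions, which is one of the standard equivalent characterizations of stochastic completeness (see \cite{Gr99,Gr09}). Since $-v$ satisfies the same hypotheses as $v$, it is enough to prove $v\le 0$, and it is cleanest to work directly with $|v|$. By parabolic hypoellipticity $v$ is smooth in the interior $(0,+\infty)\times\Omega$, so by Kato's inequality $|v|$ is a bounded distributional subsolution there, i.e. $\left(\frac{\partial}{\partial t}+\Delta\right)|v|\le 0$ on $(0,+\infty)\times\Omega$; moreover $|v|$ is continuous up to the parabolic boundary, nonnegative, and vanishes on $(\{0\}\times\Omega)\cup\big((0,+\infty)\times\Sigma\big)$, where $\Sigma=\partial\Omega$.

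Next I would extend $|v|$ by zero across $\Sigma$: set $w=|v|$ on $[0,+\infty)\times\overline\Omega$ and $w=0$ on $[0,+\infty)\times(M\setminus\Omega)$. By the continuity of $|v|$ and its vanishing on $\Sigma$, the function $w$ is continuous on $[0,+\infty)\times M$; it is bounded, nonnegative, and $w(0,\cdot)\equiv 0$ on $M$. The key point is that $w$ is a distributional subsolution of the heat equation on all of $(0,+\infty)\times M$. Indeed, for $0\le\varphi\in C^\infty_c\big((0,+\infty)\times M\big)$, integrating by parts in $\Omega$ and using Green's formula together with $|v|=0$ on $\Sigma$ yields
\[
\int_0^{+\infty}\!\!\int_M w\left(-\frac{\partial\varphi}{\partial t}+\Delta\varphi\right)
=\int_0^{+\infty}\!\!\int_\Omega\varphi\left(\frac{\partial}{\partial t}+\Delta\right)|v|
+\int_0^{+\infty}\!\!\int_{\Sigma}\varphi\,\frac{\partial|v|}{\partial\nu},
\]
where $\nu$ is the outward unit normal of $\Omega$ along $\Sigma$. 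The first term on the right is $\le 0$ since $|v|$ is a subsolution on $\Omega$; the second is $\le 0$ because $|v|\ge 0$ in $\Omega$ and $|v|=0$ on $\Sigma$, so that $\Sigma$ is a minimum set of $w$ and $\frac{\partial|v|}{\partial\nu}\le 0$ along $\Sigma$. Hence $\left(\frac{\partial}{\partial t}+\Delta\right)w\le 0$ on $(0,+\infty)\times M$ in the distributional sense.

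Finally, since $M$ is stochastically complete, the parabolic maximum principle applies to the bounded continuous subsolution $w$ and gives $w(t,x)\le \sup_M w(0,\cdot)=0$ for every $(t,x)\in(0,+\infty)\times M$. As $w\ge 0$ by construction, $w\equiv 0$, hence $v\equiv 0$ on $[0,+\infty)\times\Omega$.

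The main obstacle is getting the sign of the single-layer term $\int_{\Sigma}\varphi\,\partial|v|/\partial\nu$ right: this is exactly where the hypothesis $v=0$ on $\Sigma$ is used, forcing $\Sigma$ to be a minimum set of the nonnegative function $w$. One also needs the maximum principle quoted in the correct generality — bounded continuous weak subsolutions, with $\Sigma$ (and $M$) possibly noncompact — which is precisely the content of the stochastic completeness hypothesis. If one prefers to avoid Kato's inequality, the same argument can be run on the positive and negative parts $v^{\pm}$ separately, each of which is a subsolution on $\Omega$ vanishing on $\Sigma$ and hence extends by zero to a bounded subsolution on $M$.
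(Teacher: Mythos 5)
Your route is genuinely different from the one in the paper: the paper passes to the Laplace transform $V_\lambda(x)=\int_0^\infty e^{-\lambda t}v(t,x)\,dt$, shows $\Delta V_\lambda=-\lambda V_\lambda$ with zero boundary data, truncates to $(V_\lambda-\varepsilon)_+$, extends by zero, and invokes the \emph{elliptic} characterization of stochastic completeness (Theorem~\ref{Thm:SCUniqueness}); you instead stay entirely parabolic, take $|v|$ (or $v_\pm$), extend by zero, and appeal to a parabolic maximum principle for bounded subsolutions. This is a reasonable and arguably simpler plan, and it would also sidestep the technical double limit $(n,m\to\infty)$ in the paper's Step~1.

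However, as written the proof has a gap at the extension-by-zero step. To show that $w$ is a distributional subsolution on all of $(0,\infty)\times M$ you perform a Green's identity on $\Omega$ and produce the single-layer term $\int_\Sigma \varphi\,\partial|v|/\partial\nu$. This presupposes that $|v|$ (equivalently $v$) has a well-defined normal derivative on $\Sigma$, but the hypotheses only give $v\in C^0([0,\infty)\times\Omega)$ with a continuous vanishing boundary trace; interior parabolic hypoellipticity (which you do invoke) gives no control at $\partial\Omega$. Thus the boundary term in your identity is not yet defined, and the sign argument ``$\Sigma$ is a minimum set of $w$, hence $\partial_\nu|v|\le 0$'' cannot even be stated. (There is a second, minor issue: even if $v$ were $C^1$ up to $\Sigma$, $|v|$ is only Lipschitz across the zero set of $v$, so the integration by parts must be justified in a weak/BV sense.) The clean fix is exactly the device the paper uses for $V_\lambda$: replace $|v|$ by $(|v|-\varepsilon)_+$, which is still a bounded nonnegative subsolution, vanishes on a full neighborhood of $\Sigma$ (by continuity of $|v|$ up to $\partial\Omega$ with zero trace), so its zero extension $W_\varepsilon$ is trivially a distributional subsolution on $(0,\infty)\times M$ with $W_\varepsilon(0,\cdot)=0$. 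The maximum principle then gives $W_\varepsilon\equiv 0$, and letting $\varepsilon\to 0$ yields $|v|\equiv 0$. You should also state precisely which form of the parabolic maximum principle under stochastic completeness you are invoking — the standard references \cite{Gr99,Gr09} phrase it for bounded \emph{solutions} of the Cauchy problem, whereas you need it for bounded nonnegative \emph{subsolutions} with zero initial data; this is true, but it is a slightly stronger statement and deserves a careful citation (compare the elliptic analogue \cite[Theorem~2.14]{alias2016maximum} used in the paper).
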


To the best of our knowledge, in the general case of stochastically complete Riemannian manifolds, Theorem \ref{Thm_UniquenessDirichletAppendix} is new in the literature. Its proof involves the invertibility property of the Laplace transform over the class of continuous and bounded functions and a characterization of the stochastic completeness in terms of the uniqueness of the solution to a certain elliptic equation, see Theorem \ref{Thm:SCUniqueness}.

If the manifold is assumed to be geodesically complete, a similar result is obtained in \cite{VM15}. Here, the authors provided a uniqueness result in unbounded domains under some $L^2$-growth conditions involving the existence of a suitable \textit{T\"acklind function} (see also \cite[Theorem 1.1]{OR78} for the Euclidean case).




\section{Sufficient conditions for the $\frac 12$-property}\label{Sec:SufficientConditions}

The scope here is to prove:

\begin{theoremA}\label{Thm_SuffCondition}
Let $M=\rr^3, \sss^3$ or $\hh^3$ and $\Omega\subset M$ be a smooth domain whose boundary $\Sigma=\partial \Omega$ is a totally geodesic hypersurface or an embedded minimal helicoid. Then, $\Omega$ is a $\frac{1}{2}$-domain.
\end{theoremA}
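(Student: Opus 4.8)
The plan is to exhibit, for each of the three model surfaces, an involutive isometry of the ambient space-form $M$ that interchanges the two components $\Omega_+$ and $\Omega_-$ while fixing $\Sigma=\bd\Omega$ pointwise; the $\frac12$-property then follows from the symmetry argument already sketched in the introduction. Concretely, if $\iota\colon M\to M$ is such an isometry, then $\iota$ carries $\chi_{\Omega_+}$ to $\chi_{\Omega_-}=1-\chi_{\Omega_+}$ (up to the measure-zero set $\Sigma$), and since the heat operator $\heat$ commutes with isometries and $M$ is stochastically complete (so bounded solutions of \eqref{Prob:Cauchy} are unique), we get $u_C\circ\iota=1-u_C$ on all of $(0,\infty)\times M$. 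Restricting to a point $x\in\Sigma$, where $\iota(x)=x$, yields $u_C(t,x)=1-u_C(t,x)$, hence $u_C(t,x)=\frac12$ for all $t>0$. So the whole theorem reduces to producing the isometry $\iota$ in each case.

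For the totally geodesic surface this is immediate: geodesic reflection across a totally geodesic hypersurface is an involutive isometry of $\rr^3$, $\sss^3$, $\hh^3$ fixing the hypersurface. For the right helicoid in $\rr^3$ and the hyperbolic helicoids $X^\alpha$ in $\hh^3$, the key structural fact is that a (minimal) helicoid is invariant under a one-parameter screw-motion group $\{g_s\}$, and that rotation by angle $\pi$ about the helicoid's axis followed by the appropriate vertical translation is an involution $\iota=g_{s_0}$ that maps each ruling line to another ruling line of the \emph{same} surface but exchanges the two sides of $\Sigma$; more precisely, since each geodesic ruling passes through the axis, the half-turn about the axis fixes the axis pointwise and reverses each ruling, and one checks it fixes all of $\Sigma$ while swapping $\Omega_+\leftrightarrow\Omega_-$. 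I would verify this in coordinates: in $\rr^3$ with the helicoid $\{(\rho\cos\theta,\rho\sin\theta,a\theta):\rho\in\rr,\theta\in\rr\}$, the map $(x,y,z)\mapsto(-x,-y,z)$ does the job (it is rotation by $\pi$ about the $z$-axis, which lies on the surface). For the hyperbolic helicoid in the hyperboloid model, the analogous involution is $(x_0,x_1,x_2,x_3)\mapsto(x_0,x_1,-x_2,-x_3)$, which is a Lorentz isometry preserving $\hh^3$; from the parametrization $X^\alpha(u,v)$ one sees this sends $X^\alpha(u,v)\mapsto X^\alpha(-u,-v)$ up to the sign conventions, hence preserves the image, fixes the geodesic axis $\{(\cosh(\alpha u)\cosh v,\sinh(\alpha u)\cosh v,0,0)\}\cap(\{v=0\}\cup\dots)$ — here I would pin down exactly which subset of $\Sigma$ is fixed and argue by connectedness and the exchange of sides. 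For the Clifford torus in $\sss^3\subset\rr^4=\mathbb C^2$, written $\{(\tfrac1{\sqrt2}e^{i\phi},\tfrac1{\sqrt2}e^{i\psi})\}$, the involution $(\zeta,\omega)\mapsto(\omega,\zeta)$ is an isometry of $\sss^3$ fixing the torus pointwise and swapping the two solid-torus components; one checks it is involutive and orientation-swapping on the normal.

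The main obstacle I anticipate is \emph{not} the existence of the involution but the bookkeeping needed to confirm that $\iota$ genuinely \emph{exchanges} $\Omega_+$ and $\Omega_-$ rather than preserving each — equivalently, that $\iota$ reverses the unit normal $N$ along $\Sigma$ — and that it fixes $\Sigma$ \emph{pointwise}, not just setwise. For the geodesic surface and the Clifford torus this is transparent. For the helicoids it requires care: the half-turn about the axis fixes the axis pointwise but a priori only maps the surface to itself setwise, so I must check that the restriction of $\iota$ to $\Sigma$ is the identity — this uses that each point of $\Sigma$ lies on a unique ruling, that ruling meets the axis orthogonally, and the half-turn reverses that ruling through its intersection point with the axis, hence is \emph{not} the identity on a generic ruling. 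Thus the naive half-turn is the wrong involution; the correct $\iota$ is the half-turn about the axis composed with the screw-translation that undoes the rotation, i.e. the unique element of the screw group of order two, and I would spell out that this composite does fix $\Sigma$ pointwise while still reversing $N$. Once the right $\iota$ is identified in each case, the remaining argument — invoking stochastic completeness, uniqueness of bounded solutions, and the commutation of $\heat$ with isometries — is routine, and I would present it once as a general lemma ("if an involutive isometry of $M$ exchanges the two components of $M\setminus\Sigma$ and fixes $\Sigma$, then $\Omega$ has the $\frac12$-property") and then apply it three times.
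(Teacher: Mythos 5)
Your strategy requires an involutive isometry $\iota$ of $M$ that fixes $\Sigma$ \emph{pointwise} while exchanging $\Omega_+$ and $\Omega_-$. This can work for the totally geodesic case, but it is geometrically impossible for the three non-totally-geodesic surfaces, and this is a fatal gap. Indeed, if an isometry $\iota$ fixes a hypersurface $\Sigma$ pointwise and reverses the unit normal $N$, then comparing $\iota_*(\nabla_XY)=\nabla_{\iota_*X}\iota_*Y$ with the normal decomposition of $\nabla_XY$ along $\Sigma$ forces the second fundamental form to vanish: such a $\Sigma$ must be totally geodesic. The right helicoid, the hyperbolic helicoids, and the Clifford torus are minimal but not totally geodesic, so no pointwise-fixing, side-swapping isometry exists for them. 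Your specific candidates confirm this: the half-turn $(x,y,z)\mapsto(-x,-y,z)$ in $\rr^3$ sends the helicoid point with parameters $(\rho,\theta)$ to the one with $(-\rho,\theta)$, fixing only the axis; the coordinate swap $(\zeta,\omega)\mapsto(\omega,\zeta)$ on $\sss^3$ sends $(\tfrac1{\sqrt2}e^{i\phi},\tfrac1{\sqrt2}e^{i\psi})$ to $(\tfrac1{\sqrt2}e^{i\psi},\tfrac1{\sqrt2}e^{i\phi})$, fixing only the diagonal circle $\phi=\psi$, not the whole torus; and the ``half-turn composed with a screw-translation'' you propose as a repair is never an involution (its square is a nontrivial screw motion), so it cannot play the role of $\iota$ either.

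The missing idea is precisely the relaxation that the paper formulates as Theorem~\ref{Lem_12_RototranslationsReflection}: one does \emph{not} need $\Psi$ to fix $\Sigma$ pointwise. It suffices to have a subgroup $\mathcal T\leq\isom M$ each of whose elements preserves $\Omega_+$ and $\Omega_-$, together with a single side-swapping isometry $\Psi$, such that for every $x\in\Sigma$ there exists $T\in\mathcal T$ with $\Psi(x)=T(x)$. Then $u_+\circ T=u_+$ and $u_+\circ\Psi=u_-$ by uniqueness of bounded solutions on the stochastically complete $M$, whence $u_+(t,x)=u_+(t,T(x))=u_+(t,\Psi(x))=u_-(t,x)$ and \eqref{sumone} gives $u_+(t,x)=\tfrac12$. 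For the Euclidean helicoid $\mathcal T$ is the one-parameter screw group $T_\beta$ and $\Psi$ a half-turn about the $x$-axis, with $\Psi(X(u,v))=X(-u,v)=T_{-2u}(X(u,v))$; for the hyperbolic helicoids $\mathcal T$ is the analogous one-parameter group $T^\alpha_\beta$; and for the Clifford torus $\mathcal T={\rm SO}(2)\times{\rm SO}(2)$ acts transitively on $\Sigma$, which trivially supplies the required $T$. Your Lemma-form of the argument (your final paragraph) is correct as far as it goes, but it is only the special case $\mathcal T=\{1\}$, which covers the totally geodesic case and nothing else; without the group $\mathcal T$ the remaining three cases cannot be handled.
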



\subsection{General criteria}
We give  sufficient criteria for having the $\frac 12$-property based on the existence of a suitable families of isometries of $M$.  We then apply this criteria to our cases. 

\smallskip

In what follows, we assume that $\Omega$ is a smooth domain in a stochastically complete manifold of arbitrary dimension $n$ and we adopt the following notation
\begin{align}
    \Omega_+ \doteq \Omega, \quad \quad \Omega_-\doteq M\setminus \overline{\Omega}\quad \quad \andd \quad \quad \Sigma\doteq\partial \Omega.
\end{align}
The strategy of the proof is similar to the one adopted in \cite[Appendix A]{CSU23} and it is based on the existence of enough isometries of the ambient manifold $M$ that preserve or invert $\Omega_+$ and $\Omega_-$. We repeatedly use the fact that, by stochastic completeness, there is a unique (bounded) solution of the heat equation on $M$ with given (bounded) initial data (see Theorem \ref{Thm_StandardSCUniqueness} in the Appendix). 

We let $u_+$ (resp. $u_-$) be the solution of the heat equation on $M$ with initial data $\chi_{\Omega_+}$ (resp. $\chi_{\Omega_-}$). Since $\chi_{\Omega_+}+\chi_{\Omega_-}=1$ a.e. we see, by uniqueness,
\begin{equation}\label{sumone}
u_+(t,x)+u_-(t,x)=1
\end{equation}
for all $t>0$ and $x\in M$. The first criterium will be applied to domains in space-forms bounded by a totally geodesic hypersurface.

\begin{lemma} \label{reflection} Assume that $\Psi$ is an isometry of $M$ taking $\Omega_+$ to 
$\Omega_-$ and fixing $\Sigma$ (that is, $\Psi(x)=x$ for all $x\in\Sigma$). Then $\Omega$ has the $\frac 12$-property. 
\end{lemma}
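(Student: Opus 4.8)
The plan is to use the uniqueness of bounded solutions to the Cauchy heat problem on $M$, which is guaranteed by stochastic completeness, to show that the temperature function $u_C=u_+$ is symmetric under $\Psi$ in a way that pins its boundary value to $\frac12$. First I would observe that since $\Psi$ is an isometry, it commutes with the heat semigroup: if $u$ solves $\heat u=0$ on $(0,\infty)\times M$ with initial data $u_0$, then $u\circ\Psi$ (meaning $(t,x)\mapsto u(t,\Psi(x))$) solves the same equation with initial data $u_0\circ\Psi$. Applying this with $u=u_+$, whose initial data is $\chi_{\Omega_+}$, and using that $\Psi(\Omega_+)=\Omega_-$ so that $\chi_{\Omega_+}\circ\Psi=\chi_{\Psi^{-1}(\Omega_+)}=\chi_{\Omega_-}$ a.e.\ (here one should note $\Psi$ is an involution or at least that $\Psi^{-1}$ also swaps the two sides; since $\Psi$ fixes $\Sigma$ pointwise and swaps the two components of $M\setminus\Sigma$, $\Psi^{-1}$ does likewise), we get that $u_+\circ\Psi$ is the bounded solution with initial data $\chi_{\Omega_-}$. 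By the uniqueness statement (Theorem \ref{Thm_StandardSCUniqueness}) this forces
\begin{equation}\label{uplusinvariance}
u_+(t,\Psi(x))=u_-(t,x)\qquad\text{for all }t>0,\ x\in M.
\end{equation}

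Next I would evaluate \eqref{uplusinvariance} on the boundary $\Sigma$. Since $\Psi(x)=x$ for every $x\in\Sigma$, the left-hand side of \eqref{uplusinvariance} becomes simply $u_+(t,x)$, so we obtain $u_+(t,x)=u_-(t,x)$ for all $t>0$ and $x\in\Sigma$. Combining this with the identity \eqref{sumone}, namely $u_+(t,x)+u_-(t,x)=1$, yields $2u_+(t,x)=1$, i.e.\ $u_+(t,x)=\frac12$ for all $t>0$ and $x\in\Sigma$. Since $u_C=u_+$ is precisely the Cauchy temperature of $\Omega$, this is exactly the $\frac12$-property \eqref{half}, and the lemma follows.

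The only genuinely delicate point, and the one I would be most careful about, is the justification that $\chi_{\Omega_+}\circ\Psi=\chi_{\Omega_-}$ holds almost everywhere and that $u_+\circ\Psi$ is again a \emph{bounded} solution of the heat equation in the distributional/classical sense on all of $M$ — this is where stochastic completeness enters twice: once to know $u_+$ and $u_-$ exist and are unique, and once to identify $u_+\circ\Psi$ with $u_-$. The invariance of the Laplace--Beltrami operator under isometries and the fact that composition with an isometry preserves boundedness make this routine, but it must be stated. A minor related subtlety is that we do not actually need $\Psi$ to be an involution: it suffices that $\Psi$ maps $\Omega_+$ onto $\Omega_-$ (hence, being a bijective isometry fixing $\Sigma$, also maps $\Omega_-$ onto $\Omega_+$), so that the initial datum transforms correctly. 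Once these measure-theoretic and functional-analytic points are in place, the argument is just the two displayed lines above.
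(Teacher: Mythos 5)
Your proposal is correct and follows essentially the same route as the paper: show $u_+\circ\Psi$ is the unique bounded solution with initial data $\chi_{\Omega_-}$, hence equals $u_-$, then evaluate on $\Sigma$ where $\Psi$ is the identity and invoke $u_++u_-=1$. You are a bit more careful than the paper in justifying that $\Psi^{-1}(\Omega_+)=\Omega_-$ (the paper's phrase ``since $\Psi$ takes $\Omega_+$ to $\Omega_-$, its initial data is $\chi_{\Omega_-}$'' implicitly uses that $\Psi$, being a bijection fixing $\Sigma$ pointwise, must also map $\Omega_-$ onto $\Omega_+$), but this added care does not change the argument.
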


\begin{proof} Since $\Psi$ is an isometry, the function $u_+\circ\Psi$ is still a solution of the heat equation and, since $\Psi$ takes $\Omega_+$ to $\Omega_-$, its initial data is $\chi_{\Omega_-}$: by uniqueness, $u_+\circ\Psi=u_-$. Pick a point $x\in\Sigma$, so that $\Psi(x)=x$. Then, by \eqref{sumone} and what we have said
$$
\begin{aligned}
1&=u_+(t,x)+u_-(t,x)\\
&=u_+(t,x)+u_+(t,\Psi(x))\\
&=2u_+(t,x)
\end{aligned}
$$
which shows the assertion. 
\end{proof}

The next criterium is more general, and will be applied when the boundary is a minimal helicoid. 

\begin{theorem}\label{Lem_12_RototranslationsReflection}
    Let $M$ be a stochastically complete Riemannian manifold and $\Omega\subset M$ a smooth domain. Suppose there exist a subgroup of isometries $\mathcal{T}\leq \isom M$ and an isometry $\Psi\in \isom M$ such that
 \begin{enumerate}
        \item Every $T\in\mathcal{T}$ preserves both $\Omega_+$ and $\Omega_-$, hence $T(\Sigma)=\Sigma$;
        \item $\Psi$ sends $\Omega_+$ to $\Omega_-$ and viceversa;
        \item\label{3_Lem_12_RototranslationsReflection} for every $x\in \Sigma$ there exists $T\in \mathcal{T}$ such that $\Psi(x)=T(x)$.
    \end{enumerate}
    Then, $\Omega$ is a $\frac{1}{2}$-domain. Note that (3) holds in particular if $\mathcal{T}$ is transitive on $\Sigma$.
\end{theorem}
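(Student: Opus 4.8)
The plan is to imitate the proof of Lemma~\ref{reflection}, but now exploiting the family $\mathcal T$ to ``move'' an arbitrary boundary point to where the single isometry $\Psi$ can be used. As before, let $u_+$ and $u_-$ denote the bounded solutions of the heat equation on $M$ with initial data $\chi_{\Omega_+}$ and $\chi_{\Omega_-}$ respectively; by stochastic completeness these are unique, and by \eqref{sumone} we have $u_+(t,x)+u_-(t,x)=1$ for all $(t,x)\in(0,\infty)\times M$. The Cauchy temperature of $\Omega=\Omega_+$ is exactly $u_C=u_+$, so the $\frac12$-property is precisely the statement that $u_+(t,x)=\frac12$ for all $t>0$ and all $x\in\Sigma$.

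First I would record the two consequences of hypotheses (1) and (2). If $T\in\mathcal T$, then $u_+\circ T^{-1}$ solves the heat equation (composition with an isometry commutes with $\Delta$ and with $\partial_t$) and has initial data $\chi_{\Omega_+}\circ T^{-1}=\chi_{T(\Omega_+)}=\chi_{\Omega_+}$, since $T$ preserves $\Omega_+$; by uniqueness $u_+\circ T^{-1}=u_+$, i.e.\ $u_+(t,Tx)=u_+(t,x)$ for all $x\in M$. Thus $u_+$ is $\mathcal T$-invariant, and likewise $u_-$. Similarly, arguing as in Lemma~\ref{reflection}, the function $u_+\circ\Psi$ solves the heat equation with initial data $\chi_{\Psi^{-1}(\Omega_+)}$; since $\Psi$ sends $\Omega_+$ to $\Omega_-$ and vice versa, $\Psi$ is (up to the behaviour on the null set $\Sigma$) an involution swapping $\Omega_+$ and $\Omega_-$, so $\Psi^{-1}(\Omega_+)=\Omega_-$ up to measure zero, and by uniqueness $u_+\circ\Psi=u_-$, that is $u_+(t,\Psi x)=u_-(t,x)$.

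Now fix $x\in\Sigma$ and $t>0$. By hypothesis (3) pick $T\in\mathcal T$ with $\Psi(x)=T(x)$. Then
\begin{align}
1 &= u_+(t,x)+u_-(t,x)\notag\\
&= u_+(t,x)+u_+(t,\Psi(x))\notag\\
&= u_+(t,x)+u_+(t,T(x))\notag\\
&= u_+(t,x)+u_+(t,x)=2u_+(t,x),\notag
\end{align}
using first \eqref{sumone}, then $u_-=u_+\circ\Psi$, then $\Psi(x)=T(x)$, and finally the $\mathcal T$-invariance of $u_+$. Hence $u_+(t,x)=\frac12$ for every $x\in\Sigma$ and $t>0$, which is the $\frac12$-property. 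Finally, if $\mathcal T$ acts transitively on $\Sigma$, then (3) is automatic: for $x\in\Sigma$ we have $\Psi(x)\in\Psi(\Sigma)$, and $\Psi(\Sigma)=\Sigma$ because $\Psi$ swaps $\Omega_+$ and $\Omega_-$ and therefore fixes their common topological boundary; so by transitivity there is $T\in\mathcal T$ with $T(x)=\Psi(x)$.

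The only genuinely delicate point is the identification $u_+\circ\Psi=u_-$: one must be slightly careful that ``$\Psi$ sends $\Omega_+$ to $\Omega_-$ and vice versa'' gives $\Psi^{-1}(\Omega_+)=\Omega_-$ at least up to a set of measure zero (which is all that matters for the initial data of the heat equation), and similarly that composing a heat solution with $\Psi$ or with $T^{-1}$ again solves the heat equation and has bounded initial data so that the uniqueness theorem (Theorem~\ref{Thm_StandardSCUniqueness}) applies. Everything else is a direct three-line computation modelled on Lemma~\ref{reflection}.
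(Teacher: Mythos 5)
Your proof is correct and follows essentially the same argument as the paper: derive $u_+\circ T = u_+$ and $u_+\circ\Psi = u_-$ from uniqueness of bounded heat solutions, then chain these identities at a boundary point $x$ with $T(x)=\Psi(x)$ and invoke $u_++u_-=1$. The only cosmetic difference is that you phrase the $\mathcal T$-invariance via $T^{-1}$ and add the (unnecessary but harmless) remark about $\Psi$ being an involution; the substance is identical.
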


\begin{remark}
Note that taking $\mathcal{T}=\{1\}$ in Theorem \ref{Lem_12_RototranslationsReflection} we recover Lemma \ref{reflection}.
\end{remark}

\begin{proof}  Let $T\in\mathcal {T}$. As $T$ is an isometry, the function 
$u_+\circ T$ is still a solution of the heat equation on $M$. Since $T$ preserves $\Omega_+$,  $u_+\circ T$ has initial data $\chi_{\Omega_+}$. By uniqueness:
$$
u_+\circ T=u_+
$$
Similarly, our assumptions in (2) imply: 
$$
u_+\circ\Psi=u_-.
$$
Fix $t>0$ and $x\in\Sigma$; by assumption (3) there exists $T\in\mathcal{T}$ such that $T(x)=\Psi(x)$. 
Hence:
$$
u_+(t,x)=u_+(t, T(x))=u_+(t,\Psi(x))=u_-(t,x),
$$
and, from \eqref{sumone}, conclude immediately that $u_+(t,x)=\frac 12$ for all $t>0$ and $x\in\Sigma$. 
\end{proof}

\subsection{Proof of Theorem \ref{Thm_SuffCondition}}\label{Subsec:DensitySigmaConstant}
Assume first that $\Sigma$ is totally geodesic, and take $\Psi$ to be the reflection across $\Sigma$. Then $\Psi$ satisfies the conditions in Lemma \ref{reflection} and the assertion follows. If $\Sigma$ is an embedded minimal helicoid in $M_{\sigma}$ we will apply Theorem \ref{Lem_12_RototranslationsReflection} in each case $\sigma=-1,0,1$, as explained in the next subsections. This will complete the proof. 

\subsubsection{Right helicoid in $\mathbb R^3$}\label{Subsubsec:EucBallSplit}
It is the surface $\Sigma$ defined as
\begin{align}\label{Eq:DefEuclideanHelicoid}
\Sigma\doteq\{(x,y,z)\in \rr^3\ :\ x\sin(z)=y\cos(z)\},
\end{align}
which splits $\rr^3\setminus \Sigma$ in the two following disjoint open sets
\begin{equation}\label{Eq:DefEuclideanDomains}
\begin{split}
&\Omega_+\doteq\{(x,y,z)\in \rr^3\ :\ x\sin(z)>y\cos(z)\}\\
&\Omega_-\doteq\{(x,y,z)\in \rr^3\ :\ x\sin(z)<y\cos(z)\}.
\end{split}
\end{equation}
A parametrization of $\Sigma$ is given by $X:\rr\times \rr \to \rr^3$ so that
\begin{align}
X:(u,v)\mapsto \left(\begin{array}{c}
v \cos(u)\\
v \sin(u)\\
u
\end{array} \right).
\end{align}
It's easy to see that for any $\beta\in \rr$ the roto-translation
\begin{align}
T_\beta(x,y,z)\doteq \left[\begin{array}{ccc}
\cos(\beta) & -\sin(\beta) & 0\\
\sin(\beta) & \cos(\beta) & 0\\
0 & 0 & 1
\end{array} \right] \left( \begin{array}{c}
x \\ y \\ z 
\end{array}\right) + \left(\begin{array}{c}
0 \\ 0 \\ \beta
\end{array} \right) \in \isom{\rr^3}
\end{align}
acts by isometry on the helicoid $\Sigma$. Moreover, every $T_\beta$ preserves $\Omega_+$ and $\Omega_-$ and acts on $\Sigma$ as
\begin{align}
    T_\beta (X(u,v))=X(u+\beta,v).
\end{align}
Now consider $\Psi \in \textnormal{Isom}(\rr^3)$ given by the rotation of $\pi$ around the $x$-axis, $\Psi(x,y,z)=(x,-y,-z)$. One can see that $\Psi(\Omega_\pm)=\Omega_\mp$ and for every $w=X(u,v)\in\Sigma$:
\begin{align}
    \Psi(w)=X(-u,v)=T_{-2u}(w).
\end{align}


\subsubsection{Hyperbolic helicoids in $\mathbb H^3$}\label{Subsubsec:HypBallSplit}
Fix $\alpha>0$. As already observed, in the hyperboloid model of the hyperbolic space the helicoids $\Sigma^\alpha$ are minimal embeddes surfaces which can be parametrized by $
X^\alpha:\rr^2 \to \hh^3$ (see \cite{dCD83, Mor82, Wa19})
\begin{align}
X^\alpha:(u,v) &\mapsto \left(\begin{array}{c} \cosh(\alpha u) \cosh(v)\\ \sinh(\alpha u) \cosh(v)\\ \cos(u) \sinh(v)\\ \sin(u) \sinh(v) \end{array} \right).
\end{align}
As in the Euclidean case, every helicoid $\Sigma^\alpha$ of $\hh^3$ splits the space in two disjoint connected open sets $\Omega_+$ and $\Omega_-$ with common boundary $\Sigma^{\alpha}$, since $\Sigma^{\alpha}$ is complete (hence closed) in $\mathbb H^3$ (see \cite[Theorem 1]{Mor82}). For $\beta\in\mathbb R$, consider the isometry
\begin{align}
T_\beta^\alpha\doteq\left[\begin{array}{cccc}
\cosh(\alpha \beta) & \sinh(\alpha \beta) & 0 & 0\\
\sinh(\alpha \beta) & \cosh(\alpha \beta) & 0 & 0\\
0 & 0 & \cos(\beta) & -\sin(\beta)\\
0 & 0 & \sin(\beta) & \cos(\beta)
\end{array} \right] \in \isom{\hh^3},
\end{align}
which preserves the orientation of the helicoid (hence also the domains $\Omega_\pm$) and the isometry $\Psi(w,x,y,z)=(w,-x,y,-z)$, which inverts $\Omega_+$ and $\Omega_-$. One verifies that for every $\beta\in \rr$ and for every $(u,v)\in \rr^2$
\begin{align}
    T_\beta^\alpha(X^\alpha(u,v))=X^\alpha(u+\beta,v) \quad \andd \quad \Psi(X^\alpha(u,v))=X^\alpha(-u,v).
\end{align}
Hence, for every $x=X^\alpha(u,v)\in \Sigma^\alpha$ one has
\begin{align}
\Psi(x)=X^\alpha(-u,v)=T^\alpha_{-2u}(x).
\end{align}

\subsubsection{Clifford torus in $\mathbb S^3$}\label{Subsubsec:SphBallSplit}
Let us consider the following parametrization of the Clifford torus $X:[0,2\pi)\times[0,2\pi) \to \sss^3$
\begin{align}
X:(u,v) & \mapsto \left(\begin{array}{c}
\cos(u) \cos(v)\\ \sin(u) \cos(v)\\ \cos(u) \sin(v) \\ \sin(u) \sin(v)\end{array} \right).
\end{align}
We denote by $\Omega_+$ and $\Omega_-$ the two connected components of $\sss^3\setminus \Sigma$. In particular, observing that
\begin{align}\label{Eq:DefSphericalHelicoid}
\Sigma\doteq \{(x_1,x_2,x_3,x_4)\in \sss^3\ :\ x_1 x_4=x_2 x_3\},
\end{align}
we have
\begin{equation}\label{Eq:DefSphericalDomains}
\begin{split}
&\Omega_+\doteq \{(x_1,x_2,x_3,x_4)\in \sss^3\ :\ x_1 x_4>x_2 x_3\}\\
&\Omega_-\doteq \{(x_1,x_2,x_3,x_4)\in \sss^3\ :\ x_1 x_4<x_2 x_3\}.
\end{split}
\end{equation}
We observe that $\mathcal{T}=\textnormal{SO}(2)\times \textnormal{SO}(2)< \isom{\sss^3}$ acts transitively on $\Sigma$ and preserves $\Omega_+$ and $\Omega_-$. Moreover, the map $\Psi(x_1,x_2,x_3,x_4)=(x_3,x_4,x_1,x_2)$, which is an isometry of $\sss^3$, inverts $\Omega_+$ and $\Omega_-$. Finally,  since the action of $\mathcal{T}$ on $\Sigma$ is transitive, for every $x\in \Sigma$ there exists $T\in \mathcal{T}$ such that $\Psi(x)=T(x)$.
\begin{remark}\label{Rmk_SphericalHelicoids}
We observe that the Clifford torus can be seen as an element of the family of the \textit{spherical helicoids}, that is, a one-parameter family of immersed hypersurfaces of  $\Sigma^\alpha$. For more details see Remark \ref{Rmk_CliffordTorus}.
\end{remark}


\section{Rigidity of the $\frac 12$-property: the heat flow invariants}\label{Sec:Rigidity}

In this section we consider the general case of a smooth domain $\Omega$ in a stochastically complete manifold $M$, and prove Theorem \ref{Thm_MinimalityBoundary}.
We use in a crucial way the asymptotic expansion of the Dirichlet heat flow, as defined in \eqref{Eq:UnboundedSavoExpansion}, and which we recall here.


\subsection{Description of the heat flow invariants} Let $u_D=u_D(t,x)$ be the Dirichlet temperature function as defined in \eqref{Eq:DirichletTemperature}. The heat flow of the domain $\Omega$ is the function on $(0,\infty)\times\bd\Omega$ given by the normal derivative
$
\derive{u_D}{N}(t,x).
$
Recall (see \eqref{Eq:UnboundedSavoExpansion}) that it admits an asymptotic series, as $t\to 0$:
$$
\derive{u_D}{N}(t,x)\sim\dfrac{1}{\sqrt{4\pi t}}+\sum_{k=0}^{\infty}\gamma_k(x)t^{k/2}
$$
for certain invariants $\gamma_k\in C^{\infty}(\bd\Omega)$ described as follows. Fix a small neighborhood $U\subset\Omega$ of a given point of the boundary $\Sigma$ where the distance function to the boundary:
$$
\rho(x)\doteq d(x,\bd\Omega)
$$
is smooth. Then, there exists a sequence of differential operators $\{D_n\}_{n\in \mathbb N}$ acting on $C^{\infty}(U)$ such that
\eqref{Eq:UnboundedSavoExpansion} can be written as
\begin{align}\label{Eq_Gamma_k}
\gamma_k(x)=\left( 1+\frac{k}{2}\right) D_{k+2}1(x)
\end{align}
for all $x\in\bd\Omega$: in other words, $\gamma_k$ is obtained by applying the operator $(1+\frac k2)D_{k+2}$ to the constant function $1$ and then restricting this function to $\bd\Omega$. 

\smallskip

To define these operators, consider the first order operator $\mathcal{N}$ which acts on the function $\phi\in C^{\infty}(U)$ as follows:
$$
\mathcal{N}\phi=2\scal{\nabla\phi}{\nabla\rho}-\phi\Delta\rho.
$$
Then, the operator $D_k$ is a homogenous polynomial of degree $k-1$ in the operator $\mathcal{N}$ (of degree one) and the Laplacian $\Delta$ of $\Omega$ (of degree two). 
The sequence $\{D_k\}$ is explicitly computable by a recursive formula (see Definition 2.5 in \cite{Sa04}). 
We list below the first few operators of even degree, which will be used later in this section (see \cite[Table 2.6]{Sa04})
\begin{align}\label{Eq_D2D4D6}
D_2 &= \frac{1}{2}\mathcal{N}  \nonumber \\
D_4 &=-\frac{1}{16}\left(\Delta \mathcal{N} + 3 \mathcal{N} \Delta\right)\\
D_6 &= \frac{1}{768} \left(\Delta \mathcal{N}^3 - \mathcal{N}^3\Delta + \mathcal{N} \Delta \mathcal{N}^2 - \mathcal{N}^2 \Delta \mathcal{N}\right.\\
& \quad \quad \quad \quad \left. + 40 \mathcal{N} \Delta^2 + 8 \Delta^2 \mathcal{N} + 16 \Delta \mathcal{N} \Delta \right). \nonumber
\end{align}
In what follows, we recall the shape operator of $\Sigma=\bd\Omega$, which is the symmetric endomorphism of $T\Sigma$ defined by the formula:
$$
S(X)=-\nabla_XN 
$$
for all $X\in T\Sigma$. Its trace $\eta={\rm tr}(S)$ will be called the (unnormalized) {\it mean curvature} of $\Sigma$. Note that the mean curvature depends on the choice of the unit normal vector. Finally, it is a classical fact that, on $\Sigma$, one has $\nabla\rho=N$ and $\Delta\rho=\eta$. In particular, since $D_21=\frac 12\mathcal{N}1=-\frac 12\Delta\rho$, we have
\begin{equation}\label{gammazero}
\gamma_0=-\frac 12\eta
\end{equation}
and $\gamma_0$ vanishes if and only if $\Sigma$ is minimal. In general, if $x\in U$, consider the hypersurface $\Sigma_x$ parallel to $\Sigma$ and passing through $x$, that is, 
$$
\Sigma_x=\rho^{-1}(\rho(x)).
$$
Then $\nabla\rho(x)$ is the inner unit normal to $\Sigma_x$ and $\Delta\rho(x)$ is the mean curvature of $\Sigma_x$ at $x$.


\subsection{Relation between $u_C$ and $u_D$ when $\Omega$ has the $\frac 12$-property} Now assume that $\Omega$ has the $\frac 12$-property, so that $u_C(t,x)=\frac 12$ for all $t>0$ and $x\in\bd\Omega$. Consider the functions on $\Omega$:
$$
u_D(t,x), \quad w(t,x)\doteq 2u_C(t,x)-1.
$$
They are both solutions of the heat equation on $\Omega$, they have the same initial data
(constant, equal to $1$), and both have Dirichlet boundary data. Hence the function
$$
v(t,x)=u_D(t,x)-w(t,x)
$$
is a solution to the Dirichlet boundary value problem on $\Omega$ with zero initial data. By the regularity of $v$ up to $t=0$ (see \cite[Theorem 7.16]{Gr09}), we can apply the uniqueness result of Theorem \ref{Thm_UniquenessDirichletAppendix} obtaining that that $v(t,x)\equiv 0$. That is, one has:
$$
u_D(t,x)=2u_C(t,x)-1
$$
for all $t>0$ and $x\in\Omega$. Recall the splitting:
$$
\Omega_+\doteq\Omega, \quad \Omega_-\doteq M\setminus\bar\Omega, \quad \Sigma\doteq \bd\Omega
$$
and let $u_D^+$ (resp. $u_D^-$) denote the Dirichlet temperature of $\Omega_+$ (resp. $\Omega_-$). Let $N_+$ (resp. $N_-$) be the respective inner unit normal vectors; clearly $N_-=-N_+$. Therefore, denoting $\eta_+$ (resp. $\eta_-$) the mean curvature of $\Sigma$ as boundary of $\Omega_+$ (resp. $\Omega_-$) we have clearly $\eta_-=-\eta_+$ on $\Sigma$.

Then we have the following 

\begin{lemma} Assume that $\Omega$ has the $\frac 12$-property and let $u_D^+$ and $u_D^-$ be as above. Then the heat flow functions of $\Omega_+$ and $\Omega_-$ coincide on $\Sigma=\bd\Omega_+=\bd\Omega_-$:
$$
\derive{u_D^+}{N_+}(t,x)=\derive{u_D^-}{N_-}(t,x)
$$
for all $t>0$ and $x\in\Sigma$. In particular, the respective heat flow invariants coincide:
$$
\gamma_k^+(x)=\gamma_k^-(x)
$$
for all $x\in\Sigma$ and $k\geq 0$.
\end{lemma}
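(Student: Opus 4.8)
The plan is to deduce the Lemma from the identity $u_D=2u_C-1$ just established, applied symmetrically to $\Omega_+$ and $\Omega_-$. First I would check that $\Omega_-$ is itself a $\frac 12$-domain: writing $u_C^+=u_+$ and $u_C^-=u_-$ for the Cauchy temperatures of $\Omega_+$ and $\Omega_-$, the identity \eqref{sumone} gives $u_C^-=1-u_C^+$ on all of $(0,\infty)\times M$, and restricting to $(0,\infty)\times\Sigma$, where $u_C^+=\frac 12$ by hypothesis, yields $u_C^-=\frac 12$ on $\Sigma$. Since $\Omega_-$ is again a smooth domain of the stochastically complete manifold $M$ with boundary $\Sigma$, the argument of the previous subsection (which used only stochastic completeness, regularity of the relevant solutions up to $t=0$, and the uniqueness Theorem \ref{Thm_UniquenessDirichletAppendix}) applies verbatim and gives $u_D^-=2u_C^--1$ on $(0,\infty)\times\Omega_-$, just as $u_D^+=2u_C^+-1$ on $(0,\infty)\times\Omega_+$.

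Next I would set $w\doteq 2u_C^+-1$ on $(0,\infty)\times M$. Since $M$ has empty boundary and $u_C^+$ solves the heat equation there, $w$ is smooth on $(0,\infty)\times M$ by interior parabolic regularity, and $w=0$ on $(0,\infty)\times\Sigma$. By the two identities above, $w=u_D^+$ on $\Omega_+$, whereas on $\Omega_-$, using $u_C^+=1-u_C^-$,
$$
w = 2(1-u_C^-)-1 = -(2u_C^--1) = -u_D^-.
$$
Fixing $t>0$ and $x\in\Sigma$ and differentiating along $N_+$: because $w$ is smooth across $\Sigma$, the one-sided normal derivatives from $\Omega_+$ and from $\Omega_-$ both agree with the ambient derivative $\derive{w}{N_+}(t,x)$, so
$$
\derive{u_D^+}{N_+}(t,x) = \derive{w}{N_+}(t,x), \qquad \derive{u_D^-}{N_-}(t,x) = -\derive{w}{N_-}(t,x) = \derive{w}{N_+}(t,x),
$$
where the last equality uses $N_-=-N_+$. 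This is the asserted equality of heat flow functions on $(0,\infty)\times\Sigma$.

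For the statement on invariants I would apply Theorem \ref{Thm:UnboundedSavoExpansion} to each of $\Omega_+$ and $\Omega_-$: the heat flows $\derive{u_D^\pm}{N_\pm}(t,\cdot)$ admit the expansion \eqref{Eq:UnboundedSavoExpansion} with coefficients $\gamma_k^\pm\in C^\infty(\Sigma)$. Since an asymptotic expansion in powers of $t^{1/2}$ has uniquely determined coefficients, equality of the two heat flow functions on $(0,\infty)\times\Sigma$ forces $\gamma_k^+=\gamma_k^-$ on $\Sigma$ for every $k\geq 0$.

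There is no serious obstacle here; the content is just the $\Omega_+\leftrightarrow\Omega_-$ symmetry combined with the already-proven identity $u_D=2u_C-1$. The only points needing care are verifying that $\Omega_-$ also has the $\frac 12$-property (so that the identity may be invoked for it) and the sign bookkeeping for the normals, where the flip $N_-=-N_+$ precisely compensates the flip $w=-u_D^-$ on $\Omega_-$.
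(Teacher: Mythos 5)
Your proof is correct and follows essentially the same route as the paper: both rest on the identity $u_D=2u_C-1$ applied on either side of $\Sigma$, the smoothness of the ambient Cauchy temperature across $\Sigma$, and the sign flip $N_-=-N_+$. The only cosmetic difference is that the paper works directly with the single function $u_C$ (observing that $u_C(0,\cdot)=0$ on $\Omega_-$, so $u_D^-=1-2u_C$ there by uniqueness), while you re-derive the same fact by first checking that $\Omega_-$ is itself a $\tfrac 12$-domain and passing through $u_C^-=1-u_C^+$; the two bookkeepings are equivalent.
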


\begin{proof} We have already observed that on $\Omega^+$ we have:
$$
u_D^+=2u_C-1.
$$
Observe that, $u_C(0,x)=0$ for all $x\in\Omega_-$. Then, same argument shows that on $\Omega_-$ we have:
$$
u_D^-=1-2u_C.
$$
Therefore we conclude that, for all $t>0$ and $x\in\Sigma$:
$$
\derive{u_D^+}{N_+}(t,x)=2\derive{u_C}{N_+}(t,x)=-2\derive{u_C}{N_-}(t,x)=\derive{u_D^-}{N_-}(t,x),
$$
the second equality following because $u_C(t,\cdot)$ is smooth on $M$ and $N_-=-N_+$.
\end{proof}


\subsection{Proof of Theorem \ref{Thm_MinimalityBoundary}}

We can now prove:
\begin{theorem}
Let $\Omega$ be a (possibly unbounded) domain in a stochastically and geodesically complete $n$-dimensional Riemannian manifold $(M,\scal{\cdot}{\cdot})$. If $\Omega$ has the $\frac{1}{2}$-property, then $\partial\Omega$ is minimal. 
\end{theorem}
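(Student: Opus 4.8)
The plan is to combine the two preparatory results established so far: the comparison between the Cauchy and Dirichlet temperatures on a $\frac 12$-domain, and the asymptotic expansion of the Dirichlet heat flow with its identification of the leading invariant $\gamma_0$.

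\textbf{Step 1: Reduce to a statement about the heat flow invariants.} Suppose $\Omega$ has the $\frac 12$-property. By the discussion preceding the theorem (using the uniqueness result Theorem~\ref{Thm_UniquenessDirichletAppendix} together with the regularity of $v$ up to $t=0$), we have $u_D^+ = 2u_C - 1$ on $\Omega_+$ and $u_D^- = 1 - 2u_C$ on $\Omega_-$, and hence, by the Lemma just proved, the heat flow invariants of $\Omega_+$ and of $\Omega_-$ agree on $\Sigma$:
$$
\gamma_k^+(x) = \gamma_k^-(x) \qquad \text{for all } x \in \Sigma,\ k \geq 0.
$$
In particular this holds for $k=0$.

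\textbf{Step 2: Use $\gamma_0 = -\tfrac12 \eta$ to force minimality.} By \eqref{gammazero} applied to each side, $\gamma_0^+ = -\tfrac12 \eta_+$ and $\gamma_0^- = -\tfrac12 \eta_-$, where $\eta_\pm$ is the mean curvature of $\Sigma$ computed with respect to the inner unit normal $N_\pm$ of $\Omega_\pm$. Since $N_- = -N_+$, the shape operator changes sign, so $\eta_- = -\eta_+$ on $\Sigma$. Combining with $\gamma_0^+ = \gamma_0^-$ gives $-\tfrac12\eta_+ = -\tfrac12\eta_- = \tfrac12\eta_+$, hence $\eta_+ = 0$ identically on $\Sigma$. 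Therefore $\Sigma = \partial\Omega$ is minimal, which is the assertion.

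\textbf{Main obstacle.} The conceptual content and the real work have already been absorbed into the two cited preparatory theorems — namely the validity of the asymptotic expansion \eqref{Eq:UnboundedSavoExpansion} for possibly unbounded domains of geodesically complete manifolds (Theorem~\ref{Thm:UnboundedSavoExpansion}), and the uniqueness for the Dirichlet heat problem on such domains in the stochastically complete setting (Theorem~\ref{Thm_UniquenessDirichletAppendix}), which is what licenses the passage $u_D = 2u_C - 1$. Granting those, the proof of the theorem itself is essentially a two-line parity argument. The only point requiring a small amount of care is the bookkeeping of signs: one must apply the expansion to \emph{both} $\Omega_+$ and $\Omega_-$, note that the two inner normals are opposite so that $\eta$ flips sign, and observe that the $\frac 12$-property is what makes the two heat flows coincide on $\Sigma$ — their difference being an odd-under-normal-reversal quantity that is simultaneously forced to be even, hence zero. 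No curvature hypothesis on $M$ beyond geodesic and stochastic completeness is needed, precisely because only the universal (dimension-independent) leading term $\gamma_0$ is used.
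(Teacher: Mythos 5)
Your proof is correct and follows the paper's argument exactly: both rely on the lemma equating the heat flow invariants $\gamma_k^+ = \gamma_k^-$ on $\Sigma$, then exploit $\gamma_0^\pm = -\tfrac12\eta_\pm$ together with $\eta_- = -\eta_+$ to force $\eta_\pm \equiv 0$. Your concluding remarks about the sign bookkeeping and the role of the two preparatory theorems accurately describe where the real work lies.
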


\begin{proof} Recall from the previous lemma that $\gamma_0^+=\gamma_0^-$. On the other hand we observed in \eqref{gammazero}
$$
\gamma_0^+=-\frac 12\eta_+=\frac 12\eta_-=-\gamma_0^-.
$$
Hence $\gamma_0^+=\gamma_0^-=0$, that is, $\eta_+=\eta_-=0$ and the boundary is minimal. 
\end{proof}


\section{The divergence condition}\label{Sec:divergencecondition}

In the present section we always assume $\Omega$ to be a smooth domain in a $3$-dimensional space form $M^3_{\sigma}$ of constant curvature $\sigma\in \{-1,0,1\}$. 

\smallskip

We have the following calculation. Here $\gamma_k$ is the $k$-th coefficient in the asymptotic expansion \eqref{Eq:UnboundedSavoExpansion}. 

\begin{theorem}\label{Thm_Gamma4_Divergence}  Assume that $\Sigma=\bd\Omega$ is minimal. Then $\gamma_0=\gamma_2=0$ and 
$$
\gamma_4=\dfrac{5}{16}{\rm div}(S(\bar\nabla K))
$$
where ${\rm div}$ and $\bar\nabla$ denote the divergence  and gradient in $\Sigma$, and $K$ the gaussian curvature of $\Sigma$.
\end{theorem}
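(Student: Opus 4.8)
The plan is to compute $\gamma_4$ directly from the recursive machinery recalled in Section \ref{Sec:Rigidity}, namely $\gamma_4 = 3\,D_6 1|_\Sigma$, using the explicit expression \eqref{Eq_D2D4D6} for $D_6$ as a polynomial in $\mathcal N$ and $\Delta$. The key simplification is that in a space-form of constant curvature $\sigma$ the relevant quantities — the mean curvature $\Delta\rho$ of the parallel hypersurfaces $\Sigma_x$, and its derivatives along $\nabla\rho$ — satisfy Riccati-type ODEs with constant-curvature coefficients, so every term can be pushed down to $\Sigma$ and expressed through the intrinsic/extrinsic data of $\Sigma$ alone. I would organize the computation around the function $\phi = \Delta\rho$ (mean curvature of the level sets) and its normal derivatives: along the normal geodesics one has $\nabla_{\nabla\rho}\phi = -|S_\rho|^2 - \sigma(n-1)$ where $S_\rho$ is the shape operator of $\Sigma_x$, and more generally all the $\mathcal N$- and $\Delta$-iterates applied to $1$ reduce, on $\Sigma$, to polynomial expressions in $\eta$, $|S|^2$, $\sigma$, and the intrinsic Laplacian of these. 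Setting $\eta = 0$ at the end (minimality), most terms collapse.

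First I would record, in the minimal case and in dimension $3$, the needed normal-derivative identities: with $S$ the shape operator of $\Sigma$, $\eta = \operatorname{tr}S = 0$, one has $|S|^2 = -2\det S = -2(K-\sigma)$ by Gauss's equation, so $|S|^2$ is controlled by the Gaussian curvature $K$; the traceless condition forces $S$ to have eigenvalues $\pm\lambda$ with $\lambda^2 = K - \sigma$ (note $K \le \sigma$ on a minimal surface). Second, I would apply $D_2 = \tfrac12\mathcal N$, $D_4$, $D_6$ successively to $1$: $\mathcal N 1 = -\phi$, then iterate, carefully tracking that $\Delta$ here is the ambient Laplacian of $\Omega$ which, restricted near $\Sigma$, splits as $-\partial_\rho^2 + \phi\,\partial_\rho + \bar\Delta_{\Sigma_\rho}$ plus lower order, so that commutators $[\Delta,\mathcal N]$ generate exactly the divergence-type terms. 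Third, I would collect the $\rho$-independent part on $\Sigma$; by the structure of $D_6$ (it is built so that $\gamma_{2n}$ are the obstruction invariants), after imposing $\eta = 0$ and $\gamma_0 = \gamma_2 = 0$ the surviving contribution must be a second-order intrinsic operator applied to curvature, and matching coefficients yields $\gamma_4 = \tfrac{5}{16}\operatorname{div}(S(\bar\nabla K))$. The identity $\gamma_2 = 0$ on minimal surfaces of space-forms should fall out of the same computation applied to $D_4$: $D_4 1|_\Sigma$ is a multiple of $\bar\Delta\eta + (\text{terms vanishing when }\eta=0)$, hence zero.

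The main obstacle will be the bookkeeping of the non-commuting operators $\mathcal N$ and $\Delta$ in $D_6$: the expression \eqref{Eq_D2D4D6} has eight terms, each a product of up to four operators, and $\mathcal N$ does not commute with $\Delta$ because $\rho$ is not harmonic ($\Delta\rho = \phi \neq 0$) and because the level sets $\Sigma_\rho$ are not totally geodesic, so one must keep the full Riccati evolution of $S_\rho$ rather than freeze it at $\rho=0$. Concretely, one needs the first two $\rho$-derivatives of $\phi = \operatorname{tr}S_\rho$ and of $|S_\rho|^2$, and the interchange of $\partial_\rho$ with the intrinsic Laplacian $\bar\Delta_{\Sigma_\rho}$, which introduces terms in $\bar\nabla\phi$ and in the second fundamental form — it is precisely these that assemble into $\operatorname{div}(S(\bar\nabla K))$. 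A clean way to control this is to work in Fermi (normal) coordinates based on $\Sigma$ and expand the metric to the needed order in $\rho$ using constant curvature, so that every coefficient is an explicit polynomial in $\rho$, $\sigma$, and the principal curvatures of $\Sigma$; then the restriction to $\Sigma$ is just evaluation at $\rho = 0$. I expect that once minimality is imposed early, the number of surviving monomials is small enough that the coefficient $\tfrac{5}{16}$ emerges without excessive pain, and that the vanishing of $\gamma_0$ and $\gamma_2$ serves as a useful consistency check along the way.
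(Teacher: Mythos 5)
Your plan is essentially the paper's proof: both start from $\gamma_4=3D_61|_\Sigma$ with $D_6$ the explicit polynomial in $\mathcal N$ and $\Delta$, split $\Delta=\Delta_R+\bar\Delta$, use the Riccati equation $\nabla_N S=S^2+\sigma I$ together with the fact that a traceless $2\times2$ shape operator has all odd traces vanishing (so all even normal derivatives of $\eta$ vanish on $\Sigma$), and extract the divergence term from the commutators $[\partial_N,\bar\Delta]$ and $[\partial_N,\bar\delta]$ applied to $\eta$. Two small sign slips you should fix before running the computation: with the paper's conventions one has $\lambda^2=\sigma-K$ (not $K-\sigma$, which would be negative when $K\le\sigma$), and the Riccati trace identity reads $\partial_N\eta=\mathrm{tr}(S^2)+(n-1)\sigma$ with a plus sign on both terms; with these corrected, the term-by-term reduction you describe does produce $\gamma_0=\gamma_2=0$ and $\gamma_4=\tfrac{5}{16}\,\mathrm{div}(S(\bar\nabla K))$ exactly as in the paper.
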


The proof is given below.

\begin{cor} Assume  that $\Omega$ has the $\frac 12$-property. Then $\Sigma$ is minimal and 
$$
{\rm div}(S(\bar\nabla K))=0
$$
holds everywhere on $\Sigma$.
\end{cor}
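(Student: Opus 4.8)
The corollary follows almost immediately from Theorem \ref{Thm_Gamma4_Divergence} once we invoke the vanishing of even heat-flow invariants established in Section \ref{Sec:Rigidity}. Here is the plan.

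\textbf{Step 1: Minimality.} If $\Omega$ has the $\frac12$-property, then by Theorem \ref{Thm_MinimalityBoundary} (proved in the previous section in the stochastically and geodesically complete setting, which covers the space-forms $M^3_\sigma$) the boundary $\Sigma=\bd\Omega$ is minimal. In particular $\eta_+=0$ on $\Sigma$.

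\textbf{Step 2: Vanishing of the even invariants.} Recall from the Lemma in Section \ref{Sec:Rigidity} that, for a $\frac12$-domain, the heat-flow invariants of $\Omega_+$ and $\Omega_-$ agree on $\Sigma$: $\gamma_k^+=\gamma_k^-$ for all $k\geq 0$. On the other hand, passing from $\Omega_+$ to $\Omega_-$ reverses the unit normal, $N_-=-N_+$, and the operator $\mathcal N$ in the recursive description of the invariants is \emph{odd} in $\nabla\rho=N$ (it changes sign when $N$ is reversed, since $\mathcal N\phi=2\scal{\nabla\phi}{\nabla\rho}-\phi\,\Delta\rho$ and $\Delta\rho=\eta$ also changes sign). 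Hence $D_k$, being a homogeneous polynomial of degree $k-1$ in $\mathcal N$ and $\Delta$, satisfies $D_k^-1 = (-1)^{k-1}D_k^+1$ on $\Sigma$, so $\gamma_k^- = (-1)^{k-1}\gamma_k^+$. Combining with $\gamma_k^+=\gamma_k^-$ forces $\gamma_k^+=0$ for every \emph{even} $k$; in particular $\gamma_4=0$ on $\Sigma$. (Alternatively, one may simply quote this as already established in Section \ref{Sec:Rigidity}, where it is the mechanism behind Theorem \ref{Thm_MinimalityBoundary}.)

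\textbf{Step 3: Conclusion.} Since $\Sigma$ is minimal, Theorem \ref{Thm_Gamma4_Divergence} applies and gives
$$
0=\gamma_4=\frac{5}{16}\,{\rm div}\bigl(S(\bar\nabla K)\bigr)
$$
at every point of $\Sigma$. As $\frac{5}{16}\neq 0$, we conclude that ${\rm div}(S(\bar\nabla K))=0$ holds everywhere on $\Sigma$, which is the divergence condition \eqref{divcondition}.

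The only real content beyond bookkeeping is the parity argument in Step 2 (or equivalently the appeal to the already-proved fact that even invariants vanish) together with the computation in Theorem \ref{Thm_Gamma4_Divergence}; the latter is the genuinely hard input, but it is assumed here. Thus the corollary is a direct consequence, and no further obstacle arises.
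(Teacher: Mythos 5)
Your proof is correct and follows essentially the same route as the paper's: invoke minimality from Theorem \ref{Thm_MinimalityBoundary}, use the Lemma stating $\gamma_4^+=\gamma_4^-$, observe that $\gamma_4$ changes sign under reversal of the inner normal, and then read off the divergence condition from the explicit formula in Theorem \ref{Thm_Gamma4_Divergence}. The one organizational difference is in how the sign flip $\gamma_4^-=-\gamma_4^+$ is established: the paper does it concretely by noting that in the formula $\gamma_4=\frac{5}{16}\,{\rm div}(S(\bar\nabla K))$ the shape operator satisfies $S^-=-S^+$ while $K$ is intrinsic, whereas you derive $\gamma_k^-=(-1)^{k-1}\gamma_k^+$ for \emph{all} $k$ from the parity of the number of $\mathcal{N}$ factors in the homogeneous polynomial $D_{k+2}$, using that $\mathcal{N}$ is odd and $\Delta$ is even under $N\mapsto-N$. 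Your version is marginally more general (it shows all even invariants vanish a priori, independently of any explicit computation), and it cleanly separates the vanishing of $\gamma_4$ from the geometric interpretation of $\gamma_4$; the paper's version is more concrete and slightly shorter, using the explicit formula for both purposes at once. Either way the substance is the same, and your argument is sound.
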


\begin{proof} The minimality has been established in the previous section. In the notation of the previous section $\Omega_+\doteq\Omega, \Omega_- \doteq M\setminus\bar\Omega_+, \Sigma \doteq \bd\Omega_+=\bd\Omega_-$ we know that $\gamma_4^+=\gamma_4^-$.  Let $S^+$ (resp. $S^-$) be the shape operator of $\Sigma$ as boundary of $\Omega^+$ (resp. $\Omega_-$). Hence $S^-=-S^+$; as $K$ does not depend on the choice of the normal, we have:
$$
\gamma_4^+=\dfrac{5}{16}{\rm div}(S^+(\bar\nabla K))=-\dfrac{5}{16}{\rm div}(S^-(\bar\nabla K))=-\gamma^-_4.
$$
This gives $\gamma_4^+=\gamma_4^-=0$ as asserted. 
\end{proof}


\subsection{Preparatory results for Theorem \ref{Thm_Gamma4_Divergence}}

As a preparatory step for the divergence condition, we introduce some formalism and prove some commutation formulae. 

We will consider the foliation of a small tubular neighborhood $U$ of $\partial\Omega$ by the level sets of the distance function $\rho=d(\cdot,\partial\Omega)$. The vector field $N\doteq\nabla\rho$ is smooth on $U$, has unit length, and is everywhere normal to the leaves. A vector field $X$ on $U$ is {\it tangential} if $\scal{X}{N}=0$ on $U$, so that $X$ is everywhere tangent to the leaves. One can always split a vector field $X$ as 
$$
X=\bar X+\scal{X}{N}N
$$
with $\bar X$ tangential. In particular, we denote $\bar\nabla\phi$ the tangential gradient of $\phi$.
The shape operator of $\partial\Omega$ extends to a field of endomorphisms, acting on tangential fields,  by setting $S(\bar X)=-\nabla_{\bar X}N$: at $p\in\rho^{-1}(r)$ it is clearly the shape operator of the parallel hypersurface $\rho^{-1}(r)$ and its trace:
$$
\eta\doteq \tr S
$$ is the mean curvature of $\rho^{-1}(r)$. 
Let $\bar\delta$ denote the tangential divergence: if $(e_1, e_2)$ is a local orthonormal frame for the leaf $\rho^{-1}(r)$ through a given point $p\in U$, one has, at $p$:
$$
\bar\delta X=-\sum_{j=1}^n\scal{\bar\nabla_{e_j}\bar X}{e_j}.
$$
One has the usual formula
for the tangential Laplacian $\bar\Delta\phi=\bar\delta\bar\nabla\phi$. According to this definition, one has for $\phi\in C^{\infty}(U)$ the splitting
$$
\Delta\phi=\bar\Delta\phi+\Delta_{R}\phi,
$$
where:
$$
\Delta_{R}\phi=-\deriven{2}{\phi}{N}+\eta\derive{\phi}.{N}
$$
For the following commutation rules, we refer to page 431 of \cite{Sa01}. For all tangential vector fields $\bar X$ on $U$:
\begin{equation}\label{bardelta}
\derive{}{N}\bar\delta\bar X=\bar\delta(\nabla_N\bar X)+\scal{\bar 
X}{\bar\nabla\eta}+\bar\delta(S(\bar X)).
\end{equation}
Moreover for all $\phi\in C^{\infty}(U)$ one has, on $U$:
\begin{align}\label{bardeltatwo}
\twosystem{\nabla_N\bar\nabla\phi=\bar\nabla(\derive{\phi}{N})+S(\bar\nabla\phi)}
{\derive{}{N}\bar\Delta\phi=\bar\Delta\Big(\derive{\phi}{N}\Big)+\scal{\bar\nabla\phi}{\bar\nabla\eta}+2\bar\delta(S(\bar\nabla\phi)).}
\end{align}

Having said that, we prove the following

\begin{lemma}\label{Thm_2NormalDerivativeLaplacian}
Let $\Omega$ be a smooth domain in a  Riemannian manifold and let $\phi\in C^{\infty}(U)$, where $U$ is a neighborhood of $\partial\Omega$ as above. If both $\phi$ and $\eta$ are constant on $\partial\Omega$, then one has, on $\partial\Omega$:
\begin{align}
\frac{\partial^2}{\partial N^2} \bar\Delta\phi=\bar\Delta \left(\frac{\partial^2 \phi}{\partial N^2} \right)+4 \bar\delta \left(S\left(\bar\nabla \left(\frac{\partial \phi}{\partial N} \right) \right) \right).
\end{align}
\end{lemma}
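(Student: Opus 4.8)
The plan is to differentiate the second commutation formula in \eqref{bardeltatwo} once more along the normal direction and then restrict to $\partial\Omega$, where the two hypotheses force the tangential gradients $\bar\nabla\phi$ and $\bar\nabla\eta$ to vanish. Concretely, I would start from the identity (valid on all of $U$, for every smooth $\phi$)
$$
\derive{}{N}\bar\Delta\phi=\bar\Delta\Big(\derive{\phi}{N}\Big)+\scal{\bar\nabla\phi}{\bar\nabla\eta}+2\bar\delta\big(S(\bar\nabla\phi)\big),
$$
apply $\derive{}{N}$ to both sides, and treat the three resulting terms on the right separately.

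For the first term I would re-apply the same identity with $\phi$ replaced by $\derive{\phi}{N}$, obtaining $\derive{}{N}\bar\Delta(\derive{\phi}{N})=\bar\Delta(\deriven{2}{\phi}{N})+\scal{\bar\nabla(\derive{\phi}{N})}{\bar\nabla\eta}+2\bar\delta(S(\bar\nabla(\derive{\phi}{N})))$. For the second term I would use metric compatibility, $\derive{}{N}\scal{\bar\nabla\phi}{\bar\nabla\eta}=\scal{\nabla_N\bar\nabla\phi}{\bar\nabla\eta}+\scal{\bar\nabla\phi}{\nabla_N\bar\nabla\eta}$. For the third term I would invoke \eqref{bardelta} with $\bar X=S(\bar\nabla\phi)$, so that $\derive{}{N}\bar\delta(S(\bar\nabla\phi))=\bar\delta(\nabla_N(S(\bar\nabla\phi)))+\scal{S(\bar\nabla\phi)}{\bar\nabla\eta}+\bar\delta(S(S(\bar\nabla\phi)))$, and then expand $\nabla_N(S(\bar\nabla\phi))=(\nabla_N S)(\bar\nabla\phi)+S(\nabla_N\bar\nabla\phi)$, rewriting $\nabla_N\bar\nabla\phi$ via the first line of \eqref{bardeltatwo} as $\bar\nabla(\derive{\phi}{N})+S(\bar\nabla\phi)$.

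Now I would restrict everything to $\partial\Omega$. Since $\phi$ and $\eta$ are constant there, their tangential gradients $\bar\nabla\phi$ and $\bar\nabla\eta$ vanish identically on $\partial\Omega$, so every summand carrying such a factor drops: the middle term of the first piece, all of the second piece, and the summands $\scal{S(\bar\nabla\phi)}{\bar\nabla\eta}$, $(\nabla_N S)(\bar\nabla\phi)$ and $S(S(\bar\nabla\phi))$ of the third. The one point needing care is $\bar\delta(\nabla_N(S(\bar\nabla\phi)))$: here I would use that $\bar\delta$ of a vector field, evaluated at a point of $\partial\Omega$, depends only on the restriction of its tangential part to the leaf $\partial\Omega$ — immediate from $\bar\delta X=-\sum_j\scal{\bar\nabla_{e_j}\bar X}{e_j}$, since the $e_j$ are tangent to the leaf. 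On $\partial\Omega$ one has $\nabla_N(S(\bar\nabla\phi))=S(\bar\nabla(\derive{\phi}{N}))$ (the two extra summands vanishing there as above), hence $\bar\delta(\nabla_N(S(\bar\nabla\phi)))=\bar\delta(S(\bar\nabla(\derive{\phi}{N})))$ on $\partial\Omega$; similarly $\bar\delta(S(S(\bar\nabla\phi)))=0$ on $\partial\Omega$ since $S(S(\bar\nabla\phi))$ vanishes along $\partial\Omega$. Collecting the survivors yields $\bar\Delta(\deriven{2}{\phi}{N})+2\bar\delta(S(\bar\nabla(\derive{\phi}{N})))$ from the first piece plus another $2\bar\delta(S(\bar\nabla(\derive{\phi}{N})))$ from the third, which is precisely the asserted identity.

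The calculation is mechanical once \eqref{bardelta} and \eqref{bardeltatwo} are in hand; the only genuinely substantive point is the \emph{locality along the leaf} of the tangential divergence, which is what licenses replacing $\nabla_N(S(\bar\nabla\phi))$ inside $\bar\delta$ by its boundary value $S(\bar\nabla(\derive{\phi}{N}))$ and discarding $\bar\delta(S(S(\bar\nabla\phi)))$. That, together with careful bookkeeping of which summands contain a factor $\bar\nabla\phi$ or $\bar\nabla\eta$, is the main (modest) obstacle.
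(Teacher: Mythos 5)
Your proof is correct and follows essentially the same route as the paper's: differentiate the second commutation formula of \eqref{bardeltatwo} once more in the normal direction, apply it again to $\derive{\phi}{N}$ for the first term, use \eqref{bardelta} with $\bar X=S(\bar\nabla\phi)$ for the third, expand $\nabla_N(S(\bar\nabla\phi))$ via $\nabla_N S$ and the first line of \eqref{bardeltatwo}, and discard everything carrying a factor $\bar\nabla\phi$ or $\bar\nabla\eta$ on $\partial\Omega$. The only difference is presentational: you make explicit the locality of $\bar\delta$ along the leaf (needed to replace $\nabla_N(S(\bar\nabla\phi))$ by its boundary value inside $\bar\delta$ and to kill $\bar\delta(S^2(\bar\nabla\phi))$), which the paper uses but leaves implicit.
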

\begin{proof}
We start by observing that by \eqref{bardeltatwo} we have on $\partial\Omega$:
\begin{align}\label{first}
\deriven{2}{}{N}\bar\Delta\phi=\derive{}{N}\bar\Delta(\derive{\phi}{N})+\derive{}{N}\scal{\bar\nabla\phi}{\bar\nabla\eta}+2\derive{}{N}\bar\delta(S(\bar\nabla\phi))
\end{align}
Now by \eqref{bardeltatwo} and the fact that $\eta$ is constant on $\partial\Omega$:
\begin{align}\label{second}
\derive{}{N}\bar\Delta(\derive{\phi}{N})&=\bar\Delta \left(\frac{\partial^2 \phi}{\partial N^2} \right)+\scal{\bar\nabla\derive{\phi}{N}}{\bar\nabla\eta}+2\bar\delta\left(S(\bar\nabla(\derive{\phi}{N})\right)\\
&=\bar\Delta \left(\frac{\partial^2 \phi}{\partial N^2} \right)+
2\bar\delta\left(S(\bar\nabla(\derive{\phi}{N})\right)\\
\end{align}
Since $\phi$ and $\eta$ are both constant on $\partial\Omega$:
\begin{equation}\label{third}
\derive{}{N}\scal{\bar\nabla\phi}{\bar\nabla\eta}=0
\end{equation}
and moreover, by \eqref{bardelta} applied to $\bar X=S(\bar\nabla\phi)$:
\begin{align}\label{fourth}
2\derive{}{N}\bar\delta(S(\bar\nabla\phi))=&
2\bar\delta\left(\nabla_N(S(\bar\nabla\phi))\right)+2\scal{S(\bar\nabla\phi)}{\bar\nabla\eta}\\
&+2\bar\delta\left(S^2(\bar\nabla\phi)\right)\\
=&2\bar\delta\left(\nabla_N(S(\bar\nabla\phi))\right)\\
=&2\bar\delta\left(S(\bar\nabla(\derive{\phi}{N})\right)
\end{align}
We need to justify the very last passage. By definition of covariant derivative, we have, on $\partial\Omega$
\begin{align}\label{fifth}
\nabla_N(S(\bar\nabla\phi))&=\nabla_NS(\bar\nabla\phi)+S\left(\nabla_N(\bar\nabla\phi)\right)\\
&=S^2(\bar\nabla\phi))+R_N(\bar\nabla\phi)+S\left(\bar\nabla(\derive{\phi}{N})+S(\bar\nabla\phi)\right)\\
&=S\left(\bar\nabla(\derive{\phi}{N})\right)
\end{align}
and then
$$
2\bar\delta\left(\nabla_N(S(\bar\nabla\phi))\right)=2\bar\delta\left(S\left(\bar\nabla(\derive{\phi}{N})\right)\right)
$$
which proves \eqref{fourth}. 

We now add \eqref{second}, \eqref{third} and \eqref{fourth} and substitute in \eqref{first}. The proof is complete.
\end{proof}

We now specialize to the $3$-dimensional situation. We first observe: 

\begin{lemma}\label{Lem_EvenNormalDerivativesEta}
Let $\Omega$ be a smooth domain with minimal boundary in $M_{\sigma}$. Then, all even normal derivatives of $\eta$ vanish on $\partial \Omega$.
\end{lemma}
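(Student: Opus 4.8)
The plan is to argue by induction on $n$ that $\frac{\partial^{2n}\eta}{\partial N^{2n}}=0$ on $\partial\Omega$ for every $n\geq 0$. The base case $n=0$ is exactly the minimality hypothesis $\eta=0$ on $\partial\Omega$. For the inductive step the natural idea is to exploit the special geometry of a space-form: in $M^3_\sigma$ the ambient sectional curvature is constant, so a Riccati-type equation governs how the shape operator $S$ of the parallel hypersurfaces $\rho^{-1}(r)$ evolves along the normal geodesics. Concretely, one has $\nabla_N S = S^2 + \sigma\,\mathrm{Id}$ (acting on tangential fields), and tracing gives the scalar Riccati equation
$$
\derive{\eta}{N}=\abs{S}^2+2\sigma=\eta^2-2K_\Sigma+2\sigma,
$$
using in dimension three that $\abs{S}^2=\eta^2-2\det S$ and the Gauss equation $\det S=K_\Sigma-\sigma$, where $K_\Sigma$ is the Gaussian curvature of $\rho^{-1}(r)$. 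Here the essential simplification is that $\derive{\eta}{N}$ is expressed purely in terms of $\eta$ and the intrinsic curvature of the leaf, both of which are tangential quantities.

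Next I would differentiate this Riccati relation repeatedly in the normal direction and evaluate on $\partial\Omega$, where $\eta=0$. At the first step, $\derive{\eta}{N}=-2K_\Sigma+2\sigma$ on $\partial\Omega$, which need not vanish — so the claim really is only about \emph{even} derivatives. Differentiating once more, $\deriven{2}{\eta}{N}=2\eta\,\derive{\eta}{N}-2\derive{}{N}K_\Sigma$; the first term vanishes on $\partial\Omega$ since $\eta=0$ there, and for the second one needs the evolution of the Gaussian curvature of the parallel leaves. This is where the minimality of $\partial\Omega$ combined with the structure of the $\gamma_k$ expansion (or a direct computation with the Codazzi equation) should be used: along a minimal hypersurface the normal derivative of $K_\Sigma$ is itself controlled, and the pattern that emerges is that every term in $\deriven{2n}{\eta}{N}$, after expanding by Leibniz, carries at least one factor that is an \emph{odd}-order normal derivative of $\eta$ paired against another odd-order normal derivative of $\eta$ (or a factor $\eta$ itself) — so that on $\partial\Omega$, where the odd-order data alternates in a controlled way and $\eta=0$, everything cancels. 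The cleanest way to organize this is a parity bookkeeping: show that $\deriven{k}{\eta}{N}\big|_{\partial\Omega}$ can be written as a universal polynomial in $\eta$ and the covariant derivatives of $K_\Sigma$ and the curvature of $M$, and that each monomial appearing in the expression for even $k$ contains a factor forcing it to vanish when $\eta=0$.

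The main obstacle I expect is controlling the normal derivatives of $K_\Sigma$ (equivalently $\det S$) along the foliation, since $K_\Sigma$ is an intrinsic quantity of the leaf and its normal variation mixes in the full second fundamental form and the ambient curvature via the Codazzi equations. In dimension three this is manageable because $S$ has only two eigenvalues $\kappa_1,\kappa_2$ with $\kappa_1+\kappa_2=\eta$ and $\kappa_1\kappa_2=\det S$, so everything reduces to two scalar Riccati equations $\derive{\kappa_i}{N}=\kappa_i^2+\sigma$; on $\partial\Omega$ minimality gives $\kappa_2=-\kappa_1$, and one can then track parities directly. So the concrete strategy is: (i) write the pair of scalar Riccati equations for $\kappa_1,\kappa_2$; (ii) impose $\kappa_1+\kappa_2=0$ on $\partial\Omega$; (iii) show by induction that $\frac{\partial^{2n}}{\partial N^{2n}}(\kappa_1+\kappa_2)$ vanishes on $\partial\Omega$ because it is an odd polynomial (under $\kappa_1\leftrightarrow-\kappa_2$, $N\mapsto$ same) in the $\kappa_i$'s that is forced to be antisymmetric and hence zero at the symmetric point $\kappa_2=-\kappa_1$. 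Differentiating the Riccati equations and checking this antisymmetry is the only real computation, and it is short.
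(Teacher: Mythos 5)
Your ``concrete strategy'' at the end (step (i)--(iii): scalar Riccati equations $\kappa_i' = \kappa_i^2 + \sigma$, imposing $\kappa_1 = -\kappa_2$ on $\partial\Omega$, and the parity/antisymmetry argument) is essentially the same as the paper's proof: the paper differentiates the matrix Riccati equation $\nabla_N S = S^2 + \sigma I$ and shows by induction that $\eta^{(2k)}$ is a linear combination of $\tr(S^{2j+1})$, which is exactly your ``odd polynomial in $\kappa_1,\kappa_2$'' after writing $\tr(S^m) = \kappa_1^m + \kappa_2^m$; both then conclude using $\kappa_2 = -\kappa_1$. The earlier part of your plan, trying to track normal derivatives of the intrinsic curvature $K_\Sigma$ of the leaves, is a detour that you correctly identify as problematic and then abandon -- the paper avoids it entirely by never leaving the extrinsic quantities $\tr(S^m)$. (Minor slip: with $\det S = K_\Sigma - \sigma$ one gets $\partial_N\eta = \eta^2 - 2K_\Sigma + 4\sigma$, not $+2\sigma$, but this occurs only in the discarded first approach.)
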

\begin{proof}  A well-known fact gives $\nabla_NS=S^2+R_N$, where $R_N(X)\doteq R(N,X)N$ (here $R$ is the Riemann tensor of $M^3_{\sigma})$. Since $R_N(X)=\sigma X$, we have
\begin{equation}\label{normalshape}
\nabla_NS=S^2+\sigma I.
\end{equation}
Recall that by definition $\eta=\tr{S}$; as $\derive{}{N}{\rm tr}=\tr{\nabla_N}$ we see
$$
\derive{\eta}{N}=\tr{\nabla_N S}=\textnormal{tr}(S^2+\sigma)=\textnormal{tr}(S^2)+2
\sigma.
$$
Successive application of \eqref{normalshape} gives:
$$
\dfrac{\partial^2\eta}{\partial N^2}=2{\rm tr}(S^3)+2\sigma{\rm tr}(S).
$$
Writing for simplicity $\eta^{(k)}\doteq\dfrac{\partial^k\eta}{\partial N^k}$, standard induction shows that there are real coefficients $a_j,b_j$ such that, on $\partial\Omega$:
\begin{align}\label{Eq_Derivatives_eta}
    \eta^{(2k)}=\sum_{j=0}^{k} a_j {\rm tr}(S^{2j+1}) \quad \andd \quad \eta^{(2k+1)}=\sum_{j=0}^{k+1} b_j {\rm tr}(S^{2j}).
\end{align}
If $k_1$ and $k_2$ are the principal curvatures, we have 
${\rm tr}(S^n)=k_1^n+k_2^n$, and then ${\rm tr}(S^n)=0$ for all $n$ odd, because $k_2=-k_1$. Then $\eta^{(2k)}=0$ for all $k$, as asserted. 
\end{proof}

As a consequence, we have

\begin{theorem}\label{Thm_TechnicalDivergenceConditionMinimalHypersurfaces}
Let $\Omega$ be a domain with minimal boundary in $M^3_{\sigma}$. Then, at every point of $\partial\Omega$ we have:
\begin{align}
\frac{\partial^2}{\partial N^2} \bar\Delta\eta=-8\bar\delta \left( S\left( \bar\nabla K\right) \right),
\end{align}
where $K$ is the Gaussian curvature of $\Sigma=\partial \Omega$.
\end{theorem}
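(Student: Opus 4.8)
The plan is to apply Lemma~\ref{Thm_2NormalDerivativeLaplacian} directly to the mean curvature function itself, i.e.\ to $\phi=\eta$. Since $\Sigma=\partial\Omega$ is minimal, $\eta$ vanishes identically on $\partial\Omega$, so in particular both $\phi=\eta$ and $\eta$ are constant there; hence the hypotheses of that lemma are met and it gives, on $\partial\Omega$,
$$
\frac{\partial^2}{\partial N^2}\bar\Delta\eta=\bar\Delta\left(\frac{\partial^2\eta}{\partial N^2}\right)+4\bar\delta\left(S\left(\bar\nabla\left(\derive{\eta}{N}\right)\right)\right).
$$

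The next step is to dispose of the first term on the right-hand side. By Lemma~\ref{Lem_EvenNormalDerivativesEta} the even normal derivatives of $\eta$ vanish on $\partial\Omega$; in particular $\frac{\partial^2\eta}{\partial N^2}=0$ identically along $\partial\Omega$. Since $\bar\Delta=\bar\delta\bar\nabla$ is a purely tangential second-order operator, its value at a point of $\partial\Omega$ depends only on the restriction of the function to a neighbourhood in $\partial\Omega$; hence $\bar\Delta\bigl(\frac{\partial^2\eta}{\partial N^2}\bigr)=0$ on $\partial\Omega$ and the identity reduces to
$$
\frac{\partial^2}{\partial N^2}\bar\Delta\eta=4\bar\delta\left(S\left(\bar\nabla\left(\derive{\eta}{N}\right)\right)\right).
$$

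It then remains to rewrite $\bar\nabla\bigl(\derive{\eta}{N}\bigr)$ in terms of $\bar\nabla K$. Differentiating $\eta=\textnormal{tr}(S)$ and using the Riccati-type identity $\nabla_NS=S^2+\sigma I$ in $M^3_\sigma$ (as in the proof of Lemma~\ref{Lem_EvenNormalDerivativesEta}) gives $\derive{\eta}{N}=\textnormal{tr}(S^2)+2\sigma$ on $\partial\Omega$. The Gauss equation for the surface $\Sigma\subset M^3_\sigma$ reads $K=\sigma+\det S$, and minimality forces $\textnormal{tr}(S)=0$, whence $\textnormal{tr}(S^2)=(\textnormal{tr}\,S)^2-2\det S=-2\det S=2\sigma-2K$. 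Therefore $\derive{\eta}{N}=4\sigma-2K$ on $\partial\Omega$, and since $\sigma$ is constant, $\bar\nabla\bigl(\derive{\eta}{N}\bigr)=-2\bar\nabla K$. Substituting into the last display gives $\frac{\partial^2}{\partial N^2}\bar\Delta\eta=-8\bar\delta\bigl(S(\bar\nabla K)\bigr)$, which is the asserted identity.

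The argument is short once the two preparatory lemmas are in hand, so there is no serious analytic obstacle; the points that require genuine care are (i) the justification that $\bar\Delta\bigl(\frac{\partial^2\eta}{\partial N^2}\bigr)=0$, for which one must use that the second normal derivative of $\eta$ vanishes \emph{identically} on $\partial\Omega$ — not merely at the point under consideration — so that applying the tangential Laplacian still yields zero, and (ii) the correct bookkeeping of signs, both in the Gauss equation relating $K$ to $\det S$ for a minimal surface and in the convention $\bar\delta X=-\sum_j\scal{\bar\nabla_{e_j}\bar X}{e_j}$ adopted here.
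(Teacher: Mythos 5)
Your proof is correct and follows the same route as the paper's: apply Lemma~\ref{Thm_2NormalDerivativeLaplacian} to $\phi=\eta$, use Lemma~\ref{Lem_EvenNormalDerivativesEta} to kill the $\bar\Delta\bigl(\frac{\partial^2\eta}{\partial N^2}\bigr)$ term, and rewrite $\bar\nabla\bigl(\derive{\eta}{N}\bigr)=-2\bar\nabla K$ via the Gauss equation. Your algebra proceeds through $\det S$ where the paper works with the principal curvatures $k_1=-k_2$, but these are the same computation; the remark that $\frac{\partial^2\eta}{\partial N^2}$ must vanish identically on $\partial\Omega$ (not just at one point) for the tangential Laplacian to annihilate it is a worthwhile explicit observation, and it is indeed supplied by Lemma~\ref{Lem_EvenNormalDerivativesEta}.
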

\begin{proof} Denote by $k_1$ and $k_2$ the principal curvatures of $\Sigma$, so that $k_1=-k_2\doteq k$. Since ${\rm{tr}}(S^2)=2k^2$,
by what we have seen in the proof of Lemma \ref{Lem_EvenNormalDerivativesEta}
\begin{align}
\frac{\partial\eta}{\partial N}=\textnormal{tr}(S^2)+2\sigma=2k^2+2\sigma\quad \quad \textnormal{and} \quad \quad \frac{\partial^2 \eta}{\partial N^2}=0.
\end{align}
By the Gauss' Lemma, $K=\sigma-k^2$ and therefore
\begin{align}
\bar\nabla\left( \frac{\partial \eta}{\partial N}\right)=2\bar\nabla(k^2)=-2\bar\nabla K.
\end{align}
Applying Theorem \ref{Thm_2NormalDerivativeLaplacian} to the function $\phi=\eta$, we obtain
\begin{align}
\frac{\partial^2}{\partial N^2} \bar\Delta\eta & =\bar\Delta \left(\frac{\partial^2 \eta}{\partial N^2}\right)+ 4\bar\delta\Bigg( S \left( \bar\nabla \left( \frac{\partial \eta}{\partial N}\right) \right) \Bigg)=-8 \bar\delta\left( S\left( \bar\nabla K\right) \right).
\end{align}
\end{proof}


\subsection{Proof of Theorem \ref{Thm_Gamma4_Divergence}} Recall that $\gamma_4(x)=3D_61(x)$ for all $x\in\Sigma$, where (recall \eqref{Eq_D2D4D6}) 
\begin{align}\label{Eq_AppD6}
D_61= \frac{1}{768} \left(-\Delta \mathcal{N}^2 \eta - \mathcal{N} \Delta \mathcal{N} \eta + \mathcal{N}^2 \Delta \eta - 8 \Delta^2 \eta \right).
\end{align}
The proof of Theorem \ref{Thm_Gamma4_Divergence} will follow from the identity
\begin{equation}\label{dsixone}
D_6 1=-\frac{80}{768} \bar\delta\left( S\left( \bar\nabla K\right) \right),
\end{equation}
where $K$ is the Gaussian curvature of $\Sigma=\partial\Omega$.

\smallskip

In what follows we set for simplicity of notation:
$$
\deriven{k}{\phi}{N}\doteq \phi^{(k)}
$$
and recall the splitting 
\begin{align}
\Delta=\Delta_R + \bar\Delta,
\end{align}
where the operator $\Delta_R$ acts on $C^{\infty}(U)$ by:
\begin{align}
\Delta_R \phi=-\frac{\partial^2 \phi}{\partial N^2}+\eta \frac{\partial \phi}{\partial N}= -\phi'' + \eta \phi'.
\end{align}
We proceed by examining each term of \eqref{Eq_AppD6}:

{\bf First term: $\Delta \mathcal{N}^2 \eta$.}
\begin{align}
\mathcal{N}^2 \phi = 4\phi'' - 4\eta \phi' - 2\eta' \phi + \eta^2 \phi.
\end{align}
Recall in what follows that, by Lemma \ref{Lem_EvenNormalDerivativesEta}, all even normal derivatives of $\eta$ vanish on $\Sigma$. Hence, on $\Sigma$ we have $\mathcal{N}^2\eta = 0$ and so
\begin{align}
\bar\Delta\mathcal{N}^2 \eta =0.
\end{align}
Moreover, on $U$:
\begin{align}
\Delta_R \mathcal{N}^2 \eta &=-(\mathcal{N}^2 \eta)''\\
&=-4\eta^{(4)}+6 \eta \eta^{(3)} + 18 \eta' \eta'' - 6 \eta (\eta')^2 - 3 \eta^2 \eta''
\end{align}
so that, $\Delta_R \mathcal{N}^2 \eta=0$ on $\Sigma$, implying
\begin{align}
\Delta \mathcal{N}^2 \eta =0.
\end{align}


{\bf Second term: $\mathcal{N} \Delta \mathcal{N} \eta$.}
\begin{align}
\phi'=2\eta''-2\eta\eta',
\end{align}
it follows that $\phi'=0$ on $\Sigma$ and hence $\bar\Delta \phi'=0$. Using that $\Sigma$ is minimal, by \eqref{bardeltatwo} we get
\begin{equation}\label{Eq_App2_Thm_MainDivergenceConditionMinimalHypersurfaces}
\begin{split}
\mathcal{N}\bar\Delta \phi &=2 \frac{\partial}{\partial N} \bar\Delta \phi\\
&=2 \left( \bar\Delta \phi' + 2 \bar\delta (S(\bar\nabla\phi))\right)\\
&=4 \bar\delta(S(\bar\nabla \phi)).
\end{split}
\end{equation}
On $\Sigma$ one has $\phi=2\eta'-\eta=2\eta'$ and hence $\bar\nabla \phi=2 \bar\nabla \eta'$. As seen in the proof of Theorem \ref{Thm_TechnicalDivergenceConditionMinimalHypersurfaces} we have $\bar\nabla \eta'=-2 \bar\nabla K$ and so $\bar\nabla \phi=-4 \bar\nabla K$, that implies
\begin{align}\label{Eq_App3_Thm_MainDivergenceConditionMinimalHypersurfaces}
\mathcal{N}\bar\Delta \mathcal{N} \eta=-16 \delta(S(\bar\nabla K))
\end{align}
thanks to \eqref{Eq_App2_Thm_MainDivergenceConditionMinimalHypersurfaces}. Lastly, observe that on $\Sigma$
\begin{align}
\mathcal{N}\Delta_R \mathcal{N}\eta &=2(\Delta_R \mathcal{N}\eta)'\\
&=-4 \eta^{(4)} + 16 \eta' \eta'' + 8 \eta \eta^{(3)} - 8 \eta (\eta')^2 - 4 \eta^2 \eta'',
\end{align}
so that, $\mathcal{N}\Delta_R \mathcal{N}\eta =0$ on $\Sigma$ and hence, together with \eqref{Eq_App3_Thm_MainDivergenceConditionMinimalHypersurfaces},
\begin{align}
\mathcal{N} \Delta \mathcal{N} \eta &=\mathcal{N} \bar\Delta \mathcal{N} \eta+\mathcal{N} \Delta_R \mathcal{N} \eta = - 16 \bar\delta(S(\bar\nabla K)).
\end{align}


{\bf Third term: $\mathcal{N}^2 \Delta \eta$}
\begin{align}
\mathcal{N}^2 \Delta_R \eta
&=-4\eta^{(4)} + 14 \eta' \eta'' + 8 \eta \eta^{(3)} - 6 \eta (\eta')^2 - 5 \eta^2 \eta'' + \eta^3 \eta'
\end{align}
which implies $\mathcal{N}^2 \Delta_R \eta=0$ on $\Sigma$. If we set $\phi=\bar\Delta \eta$, then on $\Sigma$ we have $\phi=0$ and so
\begin{align}
\mathcal{N}^2 \phi= 4 \phi'' = -32 \bar\delta(S(\bar\nabla K))
\end{align}
by Theorem \ref{Thm_TechnicalDivergenceConditionMinimalHypersurfaces}. This, together with previous calculation, implies
\begin{align}
\mathcal{N}^2 \Delta \eta &=\mathcal{N}^2 \bar\Delta \eta =-32 \bar\delta(S(\bar\nabla K)).
\end{align}


{\bf Fourth term: $\Delta^2 \eta$}
\begin{align}
\Delta^2 \eta=\Delta_R \bar\Delta \eta+(\bar\Delta)^2 \eta + \Delta_R^2 \eta +\bar\Delta \Delta_R \eta.
\end{align}
Clearly $(\bar\Delta)^2\eta=0$. Now $\Delta_R\eta=-\eta''+\eta\eta'=0$ on $\Sigma$ and hence the fourth term vanishes. Since
\begin{align}
\Delta_R^2 \eta 
&=\eta^{(4)} -2 \eta \eta^{(3)} - 3 \eta' \eta''+ \eta^2 \eta'' + \eta (\eta')^2=0
\end{align}
on $\Sigma$, we are left with
\begin{align}
\Delta^2\eta &=\Delta_R \bar\Delta \eta=-(\bar\Delta \eta)''=8\bar\delta(S(\bar\nabla K)).
\end{align}
by Theorem \ref{Thm_TechnicalDivergenceConditionMinimalHypersurfaces}.

\smallskip 
Collecting all the contributions, the claim \eqref{dsixone} follows.

\section{Rigidity of the divergence condition}\label{rigiditydc}

The scope of this section is to prove the following result.

\begin{theorem}\label{dc} Let $\Sigma$ be minimal surface in $M_{\sigma}$ for which
$$
{\rm div}(S(\bar\nabla K))=0.
$$
Then $\Sigma$ is geodesically ruled, that is, it is foliated by geodesics of the ambient manifold. 
\end{theorem}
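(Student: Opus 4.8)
The plan is to work pointwise on the minimal surface $\Sigma\subseteq M^3_\sigma$ and exploit the fact that the shape operator $S$ has eigenvalues $\pm k$ with $k\geq 0$, and that by the Gauss equation $K=\sigma-k^2$, so that $\bar\nabla K=-2k\,\bar\nabla k$ and everything can be written in terms of the single function $k$. On the open set $\{k>0\}$ where $\Sigma$ is not totally geodesic, $k$ is smooth and we may choose a local orthonormal frame $(e_1,e_2)$ of principal directions, $S e_1=k e_1$, $S e_2=-k e_2$. The idea is to expand ${\rm div}(S(\bar\nabla K))=0$ in this frame, using the Codazzi equations (which in a space-form say $(\bar\nabla_X S)Y$ is symmetric in $X,Y$) to control the derivatives of the frame and of $k$. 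Codazzi, together with $\operatorname{tr}S=0$, forces strong relations: writing $\bar\nabla_{e_i}e_j$ in terms of the connection coefficients, the off-diagonal Codazzi identities express the rotation coefficients of the principal frame in terms of $e_1(k)$ and $e_2(k)$.

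First I would record these Codazzi consequences explicitly: if $\nabla_{e_1}e_1 = b\, e_2$ and $\nabla_{e_2}e_2 = a\, e_1$ (the other coefficients being determined by orthonormality), then Codazzi applied to the pair $(e_1,e_2)$ yields something like $e_2(k) = -2k b$ and $e_1(k) = 2k a$ (up to signs/constants I would pin down in the computation). Then I would substitute $S(\bar\nabla K) = -2k\,S(\bar\nabla k) = -2k\big(k\,e_1(k)e_1 - k\,e_2(k)e_2\big) = -2k^2\big(e_1(k)e_1 - e_2(k)e_2\big)$ and compute its divergence $\bar\delta$ in the frame, i.e. $-\sum_i \langle \bar\nabla_{e_i}(S\bar\nabla K), e_i\rangle$. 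This produces a second-order expression in $k$ whose coefficients involve $a,b$ and hence, after eliminating $a,b$ via Codazzi, only $k$ and its first derivatives. The vanishing of this expression is then a single scalar PDE for $k$ on $\{k>0\}$.

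The key geometric step: I claim this PDE forces $k$ to be constant along the principal direction $e_2$ with eigenvalue $-k$ — equivalently, that the integral curves of $e_2$ (or $e_1$) are geodesics of $\Sigma$ which, because they are asymptotic lines of a minimal surface, are geodesics of the ambient $M^3_\sigma$. Concretely, one of the principal curvature directions of a minimal surface is an asymptotic direction, and an asymptotic curve along which the surface is "flat enough" is a geodesic of the ambient space precisely when its geodesic curvature in $\Sigma$ vanishes, i.e. when the relevant connection coefficient ($a$ or $b$) vanishes; and the Codazzi relations tie that coefficient to a directional derivative of $k$. So the divergence condition, after the substitutions above, should reduce to the statement that $e_1(k)=0$ (or $e_2(k)=0$) identically on $\{k>0\}$, whence that family of curves is geodesic in $\Sigma$, hence — being asymptotic — geodesic in $M^3_\sigma$, so $\Sigma$ is geodesically ruled there. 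On the complementary closed set $\{k=0\}$, $\Sigma$ is totally geodesic, trivially ruled; a short real-analyticity argument (minimal surfaces in space-forms are real-analytic) closes the gap between the two sets, so $\Sigma$ is geodesically ruled everywhere.

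The main obstacle I anticipate is bookkeeping the Codazzi equations and the divergence computation without sign errors, and in particular making sure that the scalar PDE coming out of ${\rm div}(S(\bar\nabla K))=0$ really does decouple into "$k$ is constant along one principal direction" rather than some weaker mixed condition; it is conceivable the raw PDE only gives this after combining it with the contracted Codazzi identity (which for minimal surfaces relates $\bar\Delta k$, $|\bar\nabla k|^2$ and $k$). I would therefore keep the contracted Codazzi identity at hand as a second equation and use the two together. A secondary technical point is the passage across $\{k=0\}$: I would invoke analyticity of $\Sigma$ and of $k^2$ to argue that if $\Sigma$ is ruled on the dense open set $\{k>0\}$ (when that set is nonempty) the ruling extends, and otherwise $\Sigma$ is totally geodesic; the degenerate case where $\{k>0\}$ is empty but $\Sigma$ is not globally totally geodesic cannot occur by connectedness and analyticity.
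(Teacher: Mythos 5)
There is a genuine error in the geometric setup that the rest of the argument inherits. You work in a principal orthonormal frame $(e_1,e_2)$ with $Se_1=ke_1$, $Se_2=-ke_2$, and later assert that ``one of the principal curvature directions of a minimal surface is an asymptotic direction.'' This is false: for a minimal surface the two asymptotic directions are the bisectors of the principal directions (they are the null directions of $l(\cdot,\cdot)$, i.e.\ $\pm(e_1+e_2)/\sqrt{2}$ and $\pm(e_1-e_2)/\sqrt{2}$), and they coincide with principal directions only at umbilics, which you have excluded. Consequently, even if your computation did produce $e_1(k)=0$ on $\{k>0\}$ and made the $e_2$-curves geodesics of $\Sigma$, those curves would be curvature lines with normal curvature $-k\neq 0$ and hence would \emph{not} be geodesics of the ambient space, so you would not obtain a geodesic ruling. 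Worse, the intermediate claim is already contradicted by the model example: on the right helicoid in $\mathbb R^3$ (which does satisfy ${\rm div}(S(\bar\nabla K))=0$), in isothermal asymptotic coordinates $(u,v)$ one has $E=\cosh^2 v$, so $K$ depends only on $v$ and $\bar\nabla K$ is tangent to the ruling direction $\partial_v$, i.e.\ to an \emph{asymptotic} direction; since the principal directions are $\partial_u\pm\partial_v$, both $e_1(K)$ and $e_2(K)$ are nonzero there. So the divergence condition cannot force $e_1(k)=0$ or $e_2(k)=0$ in the principal frame.

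The correct frame to use is the orthonormal asymptotic frame (equivalently, the paper's isothermal coordinates with asymptotic coordinate lines, in which $g=E\,{\rm id}$ and $l=\left(\begin{smallmatrix}0&1\\1&0\end{smallmatrix}\right)$). But even after switching frames, your plan underestimates the remaining difficulty. In that frame the divergence condition reduces (Lemma~\ref{Lem_Identity_Derivatives_E}) to the second-order PDE $EE_{uv}-4E_uE_v=0$, equivalently $Q_{uv}=0$ for $Q=E^{-3}$, which by itself does not decouple into ``$E$ depends on one variable.'' The paper must couple this with the Gauss equation $\Delta\log E = 2\sigma E - 2E^{-1}$ and run the Taylor-coefficient unique-continuation argument of Theorem~\ref{Thm_General_oneofthevariables} to conclude $E=E(u)$ or $E=E(v)$; the hedge in your proposal about ``combining with the contracted Codazzi identity'' gestures in this direction but does not supply that argument. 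So the proposal both starts from the wrong frame and omits the central analytic step; as written it would not close.
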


\noindent For the proof we use an isothermal parametrization of a special type.

\subsection{Isothermal parametrization with asymptotic coordinate lines} It is well-known that the umbilical points of a minimal surface are either isolated or make up the whole space $\Sigma$. In the second case $\Sigma$ is totally geodesic and we are done. In the first case, we fix any point and  consider an open neighborhood $A$ which is free of umbilical points and admits an isothermal parametrization with coordinates $(u,v)$. 

Following Nietsche \cite{Ni89} we can make a conformal change of coordinates so that the coordinates lines $u=c$ and $v=c$ are asymptotic lines, that is, if $\gamma=\gamma(t)$ is a parametrization of one such line, we have, if $l$ is the second fundamental form,
$$
l(\gamma'(t),\gamma'(t))=0
$$
for all $t$. This change of coordinates is found as follows: if the matrix of the second fundamental form is $\twomatrix LMMN$, then the Codazzi-Mainardi identity implies that 
the map $\Phi=L-iM$ is holomorphic; then, the conformal change is suitably constructed from $\Phi$ (see section 172 of \cite{Ni89}). This is valid for minimal surfaces of $\mathbb R^3$, but continues to hold in all space-forms because the Codazzi-Mainardi identity extends without change in those ambient spaces. 

The conclusion is that in a neighborhood of a non-umbilical point we have coordinates $(u,v)$ so that the matrices of the first and second fundamental form are, respectively,
\begin{equation}\label{isoc}
g=E\twomatrix 1001 \quad \textnormal{and}\quad l=\twomatrix 0110,
\end{equation}
where $E=E(u,v)$ is the conformal factor. Then the principal curvatures are $\frac 1E$ and $-\frac 1E$, and the Gauss lemma says that
\begin{equation}\label{gausslemma}
K=\sigma-\dfrac{1}{E^2}.
\end{equation}

\begin{lemma}\label{Lem_Identity_Derivatives_E} Assume that $\Sigma$ satisfies the divergence condition as in Theorem \ref{dc}.  In isothermal coordinates $(u,v)$ as in \eqref{isoc} one has the identity: 
$$
EE_{uv}-4E_uE_v=0.
$$
Hence, if $Q=E^{-3}$ we have $Q_{uv}=0$
identically on $\Omega$.
\end{lemma}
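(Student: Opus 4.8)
The plan is to start from the divergence condition and write everything out in the isothermal asymptotic coordinates $(u,v)$ of \eqref{isoc}, where the first fundamental form is $E\,(du^2+dv^2)$, the shape operator $S$ has eigenvalues $\pm 1/E$, and the Gaussian curvature is $K=\sigma-E^{-2}$ by \eqref{gausslemma}. Since $\sigma$ is constant, $\bar\nabla K$ is (up to sign) the tangential gradient of $E^{-2}$; explicitly, in the frame $e_1=E^{-1/2}\partial_u$, $e_2=E^{-1/2}\partial_v$, we have $\bar\nabla(E^{-2})= -2E^{-3}(e_1E_u/\sqrt E + e_2 E_v/\sqrt E)$ — I will just compute the components of $\bar\nabla K$ in the coordinate basis, namely $\bar\nabla K = g^{-1}\,dK$, whose components are $E^{-1}K_u$ and $E^{-1}K_v$, with $K_u = 2E^{-3}E_u$ and $K_v=2E^{-3}E_v$.

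Next I would apply the shape operator. The subtlety is that $S$ is diagonal with eigenvalues $1/E,-1/E$ \emph{in the principal frame}, but the asymptotic coordinate lines are \emph{not} principal directions — in asymptotic coordinates the matrix of $l$ is $\begin{pmatrix}0&1\\1&0\end{pmatrix}$, so the matrix of $S=g^{-1}l$ in the $(u,v)$ basis is $E^{-1}\begin{pmatrix}0&1\\1&0\end{pmatrix}$. Hence $S(\bar\nabla K)$ has coordinate components obtained by swapping: its $\partial_u$-component is $E^{-1}\cdot(E^{-1}K_v)$ and its $\partial_v$-component is $E^{-1}\cdot(E^{-1}K_u)$. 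Then I would expand the tangential divergence ${\rm div}(W) = \frac{1}{\sqrt{\det g}}\big(\partial_u(\sqrt{\det g}\,W^u)+\partial_v(\sqrt{\det g}\,W^v)\big)$ with $\sqrt{\det g}=E$, substitute $W^u=E^{-2}K_v$, $W^v=E^{-2}K_u$, and also substitute $K_u=2E^{-3}E_u$, $K_v=2E^{-3}E_v$. After multiplying through by $E$ and clearing common powers of $E$, the condition ${\rm div}(S(\bar\nabla K))=0$ should collapse to a second-order PDE in $E$; a short computation of the mixed partials will show the first-derivative-in-$u$-and-$v$ terms combine so that the whole thing is proportional to $EE_{uv}-4E_uE_v$, giving the stated identity $EE_{uv}-4E_uE_v=0$.

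Finally, for the last assertion: set $Q=E^{-3}$. Then $Q_u = -3E^{-4}E_u$ and differentiating again, $Q_{uv} = -3E^{-4}E_{uv} + 12 E^{-5}E_uE_v = -3E^{-5}\big(EE_{uv} - 4E_uE_v\big)$, which vanishes identically by the identity just derived. Hence $Q_{uv}\equiv 0$ on the umbilic-free neighborhood, as claimed.

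The main obstacle I anticipate is purely bookkeeping: correctly accounting for the fact that in asymptotic (rather than principal) coordinates the shape operator is off-diagonal, so $S$ acts by swapping the $u$- and $v$-components of $\bar\nabla K$ (weighted by $E^{-1}$), and then carefully tracking the powers of $E$ through the divergence formula so that everything reduces cleanly to the single combination $EE_{uv}-4E_uE_v$. There is no conceptual difficulty beyond keeping the raising/lowering of indices and the $\sqrt{\det g}$ factor straight; one must also note that the constant ambient curvature $\sigma$ drops out at the very first step because $\bar\nabla K=-\bar\nabla(E^{-2})$, which is why the same computation works uniformly for $M^3_\sigma$ with $\sigma\in\{-1,0,1\}$.
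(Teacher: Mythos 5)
Your proposal is correct and follows essentially the same route as the paper: compute $\bar\nabla K$ from $K=\sigma-E^{-2}$, apply the off-diagonal shape operator $S=g^{-1}l$ which swaps the $u$- and $v$-components with a factor $E^{-1}$, expand the Riemannian divergence with $\sqrt{\det g}=E$, and observe that everything reduces to $\frac{4}{E^6}(EE_{uv}-4E_uE_v)$; the $Q=E^{-3}$ step is then the identical one-line computation. Your explicit remark that the asymptotic coordinate lines are not principal directions (so $S$ acts by swapping rather than scaling) is a useful clarification that the paper leaves implicit.
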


\begin{proof} We compute ${\rm div}(S(\bar\nabla K))$ in local coordinates. Now, setting $\bd_1\doteq\bd_u$ and $\bd_2\doteq \bd_v$ we have:
$$
\bar\nabla K=\dfrac{1}{E}\Big(K_u\bd_1+K_v\bd_2\Big)
$$
Now $S(\bd_1)=\frac 1E\bd_2$ and $S(\bd_2)=\frac 1E\bd_1$ by \eqref{isoc},  hence
$$
S(\bar\nabla K)=\dfrac{1}{E^2}\Big(K_v\bd_1+K_u\bd_2\Big)
$$
so that
$$
{\rm div}(S(\bar\nabla K))=\dfrac{1}{E}\Big(\bd_1(K_vE^{-1})+\bd_2(K_uE^{-1})\Big)
$$
From \eqref{gausslemma} we see $K_u=2E^{-3}E_u$ and $K_v=2E^{-3}E_v$
$$
\begin{aligned}
\bd_1(K_vE^{-1})&=2\bd_1(E^{-4}E_v)\\
&=-8E^{-5}E_uE_v+2E^{-4}E_{uv}.
\end{aligned}
$$
and similarly for the second term. We end-up with:
$$
{\rm div}(S(\bar\nabla K))=\dfrac{4}{E^6}\Big(EE_{uv}-4E_uE_v\Big)
$$
and the first identity follows. The second is a straightforward verification. 
\end{proof}
In any isothermal parametrization with conformal factor $E$ the Gauss curvature is expressed as
$$
K=\dfrac 1{2E}\Delta\log E
$$
(here $\Delta f=-f_{uu}-f_{vv}$). Using \eqref{gausslemma} we see
$$
\Delta\log E=2\sigma E-\frac{2}{E}.
$$
Set $Q=E^{-3}$ so that $E=Q^{-1/3}$ (we know that $Q_{uv}=0$). The PDE becomes:
$$
\Delta\log Q=-6\sigma Q^{-1/3}+6Q^{1/3}.
$$
It will be useful to normalize $Q$ so that it takes the value $1$ at $(0,0)$; we end-up with the following consequence of Lemma \ref{Lem_Identity_Derivatives_E}.

\begin{lemma}\label{sessantatre} Let $P=\frac{1}{Q(0,0)}Q$ and $k=Q(0,0)^{\frac 13}>0$. Then $P$ is analytic, positive on $A$ and satisfies
$$
\Delta\log P=\phi(P), \quad P(0,0)=1, \quad P_{uv}=0\quad\text{on}\quad A,
$$
where we set $\phi(x)=6kx^{1/3}-\dfrac{6\sigma}{k}x^{
-1/3}$.
\end{lemma}

The following fact will be proved in the appendix; its proof is inspired by a similar technical lemma in Nitsche (p. 242 in \cite{Ni95}). 

\begin{theorem}\label{Thm_oneofthevariables} An analytic function $P=P(u,v)$ satisfying the properties in Lemma \ref{sessantatre} depends only on one of the variables, that is, either $P=P(u)$ or $P=P(v)$. 
\end{theorem}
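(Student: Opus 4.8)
The plan is to exploit the constraint $P_{uv}=0$: it forces $P$ to separate as $P(u,v)=f(u)+g(v)$, and then the PDE of Lemma~\ref{sessantatre} turns into a functional equation which --- if both $f$ and $g$ were non-constant --- would force the fourth derivative of $R(x):=x^{2}\phi(x)=6kx^{7/3}-\tfrac{6\sigma}{k}x^{5/3}$ to vanish identically on an interval; but $R^{(4)}$ has at most one zero on $(0,\infty)$, a contradiction.

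First I would record the separation. On any coordinate rectangle $A'\subseteq A$, the relation $P_{uv}=0$ gives $P(u,v)=f(u)+g(v)$ with $f,g$ real-analytic; substituting into $\Delta\log P=\phi(P)$ (with $\Delta=-\partial_{uu}-\partial_{vv}$) and clearing the denominator $(f+g)^{2}$ yields
\begin{equation}
R\bigl(f(u)+g(v)\bigr)=f'(u)^{2}+g'(v)^{2}-\bigl(f''(u)+g''(v)\bigr)\bigl(f(u)+g(v)\bigr).
\end{equation}
Assume, for contradiction, that neither $f$ nor $g$ is constant. Then, being analytic, $f'$ and $g'$ have only isolated zeros, so there is a point $(u_{0},v_{0})$ at which $f'(u_{0})\neq0$ and $g'(v_{0})\neq0$.

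Next I would change variables. Near $u_{0}$ the map $u\mapsto s:=f(u)$ is an analytic diffeomorphism with inverse $u=u(s)$; putting $p(s):=f'(u(s))^{2}$ one checks $p'(s)=2f''(u(s))$, hence $f''(u)=\tfrac12 p'(f(u))$, and symmetrically $q(t):=g'(v(t))^{2}$ obeys $g''(v)=\tfrac12 q'(g(v))$. Because $(u,v)\mapsto(s,t)=(f(u),g(v))$ is an analytic diffeomorphism near $(u_{0},v_{0})$, the displayed identity becomes a genuine identity in the \emph{independent} variables $(s,t)$ on an open set:
\begin{equation}
R(s+t)=p(s)+q(t)-\tfrac12(s+t)\bigl(p'(s)+q'(t)\bigr).
\end{equation}
Applying $\partial_{t}\partial_{s}$ gives $p''(s)+q''(t)=-2R''(s+t)$; applying $\partial_{s}$ and then $\partial_{t}$ to this annihilates the left-hand side and leaves $R^{(4)}(s+t)\equiv0$ on an open interval of values $s+t>0$. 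But a direct computation gives
\begin{equation}
R^{(4)}(x)=-\frac{16}{27k}\,x^{-7/3}\bigl(7k^{2}x^{2/3}+5\sigma\bigr),
\end{equation}
and since $x\mapsto x^{2/3}$ is strictly monotone on $(0,\infty)$ the bracket vanishes for at most one $x>0$ (for no $x>0$ if $\sigma\ge0$); hence $R^{(4)}\not\equiv0$ on any interval. This contradiction shows that $f$ or $g$ is constant, i.e. $P=P(u)$ or $P=P(v)$ on $A'$, and by analyticity of $P$ the same holds on all of $A$.

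The step I expect to demand the most care is the change of variables $s=f(u)$, $t=g(v)$: one has to check it is a legitimate analytic reparametrization near a non-critical point so that $s,t$ may be differentiated as independent variables, and verify the identity $p'(s)=2f''(u(s))$ that repackages the second-order data of $f$ as a function of $s$ (this device is taken from Nitsche, p.~242 of \cite{Ni95}). Everything after that is the short computation above, and the only structural fact used is that $R=x^{2}\phi$ is not a polynomial of degree $\le3$ --- the cube roots in $\phi$ make $R^{(4)}\neq0$ --- which is exactly what produces the contradiction.
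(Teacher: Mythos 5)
Your proof is correct, and it takes a genuinely different route from the paper's. The paper proves a general theorem (Theorem \ref{Thm_General_oneofthevariables}) by Taylor-expanding $f$ and $g$ at the origin and manipulating the resulting algebraic relations among the coefficients $a_j, b_j$; this requires a pointwise nondegeneracy condition $\psi''(1)-2\psi'(1)\neq0$, which for the specific $\psi$ of Lemma \ref{sessantatre} can fail when $\sigma=1$ at one particular value of $E(0,0)$, so the paper must finish with a separate argument (``move the origin to a point where the equality fails''). You instead work with the functional equation on an open set: after the Nitsche-style reparametrization $s=f(u)$, $t=g(v)$ with $p(s)=f'(u(s))^2$, $q(t)=g'(v(t))^2$ (which correctly converts $f''$ into $\tfrac12 p'(f)$), you get a clean identity in independent variables, and three mixed derivatives kill the left side outright, leaving $R^{(4)}(s+t)\equiv 0$ on an interval. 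The factorization
\begin{equation}
R^{(4)}(x)=-\frac{16}{27k}\,x^{-7/3}\bigl(7k^{2}x^{2/3}+5\sigma\bigr)
\end{equation}
then shows this is impossible for any $\sigma\in\{-1,0,1\}$, with no exceptional case to patch. What your argument buys is exactly this robustness: instead of a condition at the single value $P(0,0)=1$ you use the global fact that $R$ is not a cubic polynomial, which holds uniformly; the trade-off is that you genuinely need the functional identity on an open set (hence the passage to a non-critical point of both $f$ and $g$), whereas the paper's coefficient computation is confined to a single jet. One small stylistic remark: since you absorb the additive constant $1$ into $f+g$ rather than writing $P=1+f+g$ as the paper does, it is worth stating explicitly that ``constant'' in the dichotomy means $f$ or $g$ is a constant (not necessarily zero), which is what you actually conclude and what the theorem requires.
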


It follows that the same holds for the conformal factor $E$. We summarize:

\begin{cor} Let $\Sigma$ be a minimal surface in $M_{\sigma}$ satisfying the divergence condition \eqref{divcondition}. Assume that $\Sigma$ is not totally geodesic. Then in an isothermal parametrization with asymptotic line coordinates the conformal factor $E$ depends only on one of the variables, that is, either $E=E(u)$ or $E=E(v)$. 
\end{cor}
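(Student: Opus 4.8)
The plan is to obtain the statement by simply assembling the preceding results. Since $\Sigma$ is assumed not to be totally geodesic, its umbilical locus is discrete, so around any non-umbilical point of $\Sigma$ we can fix the isothermal parametrization with asymptotic coordinate lines $(u,v)$ constructed above, in which the first and second fundamental forms take the normal form \eqref{isoc} with conformal factor $E=E(u,v)$; the corollary is a local statement about such a parametrization. Lemma \ref{Lem_Identity_Derivatives_E} --- which is exactly the place where the divergence condition \eqref{divcondition} enters --- then gives $Q_{uv}=0$ on this neighborhood, where $Q\doteq E^{-3}$.

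Next I would feed in the Gauss equation. Combining the standard expression $K=\frac{1}{2E}\Delta\log E$ for the Gauss curvature in isothermal coordinates with the Gauss lemma \eqref{gausslemma}, $K=\sigma-E^{-2}$, yields $\Delta\log E = 2\sigma E - 2E^{-1}$, and substituting $E=Q^{-1/3}$ turns this into $\Delta\log Q = -6\sigma Q^{-1/3} + 6Q^{1/3}$. Rescaling $Q$ by its value at the base point, i.e. setting $P=Q/Q(0,0)$ and $k=Q(0,0)^{1/3}>0$, puts us precisely in the hypotheses of Lemma \ref{sessantatre}: $P$ is analytic and positive, $P(0,0)=1$, $P_{uv}=0$, and $\Delta\log P=\phi(P)$ with $\phi(x)=6kx^{1/3}-6\sigma k^{-1}x^{-1/3}$. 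Theorem \ref{Thm_oneofthevariables} then applies and forces $P$ to depend only on one of $u,v$; since $E=k^{-1}P^{-1/3}$ is a function of $P$ alone, $E$ depends on the same single variable, which is the assertion.

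The actual content, and the step I expect to be the main obstacle, is Theorem \ref{Thm_oneofthevariables}, whose proof is deferred to the appendix. The route I would take there is: from $P_{uv}=0$ and analyticity write $P(u,v)=f(u)+g(v)$; substitute into $\Delta\log P=\phi(P)$ and clear denominators to get $-(f''+g'')(f+g)+(f')^2+(g')^2=\psi(f+g)$ with $\psi(x)\doteq x^2\phi(x)=6kx^{7/3}-6\sigma k^{-1}x^{5/3}$; then differentiate. Applying $\partial_u\partial_v$ once and dividing by $f'g'$ gives $-f'''/f' - g'''/g' = \psi''(f+g)$, whose left-hand side is a sum of a function of $u$ and a function of $v$; applying $\partial_u\partial_v$ again therefore leaves $\psi^{(4)}(f+g)\,f'g'=0$. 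If neither $f'$ nor $g'$ vanished identically, then by analyticity $f'g'\ne 0$ on an open dense set whose image under $f+g$ contains an interval, forcing $\psi^{(4)}\equiv 0$ there and hence (again by analyticity) on all of $(0,\infty)$; but $\psi^{(4)}(x)$ has a nonvanishing $x^{-5/3}$ term since $k>0$, a contradiction. Hence $f'\equiv 0$ or $g'\equiv 0$, i.e.\ $P$ depends on one variable only. Carrying this out carefully --- tracking the fractional exponents and dealing with the discrete zero sets of $f'$ and $g'$ --- is the delicate part, and it parallels the technical lemma on p.~242 of \cite{Ni95}.
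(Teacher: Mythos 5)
Your assembly of the corollary from Lemma \ref{Lem_Identity_Derivatives_E}, the Gauss equation, Lemma \ref{sessantatre} and Theorem \ref{Thm_oneofthevariables} is exactly the paper's route and is correct: once $P$ depends on one variable, so does $E=k^{-1}P^{-1/3}$. Where you diverge is in the proposed proof of Theorem \ref{Thm_oneofthevariables}. The paper proves a more general statement (Theorem \ref{Thm_General_oneofthevariables}) by expanding $f$, $g$ and $\psi(1+f+g)$ in Taylor series at the origin, matching coefficients through degree four, and extracting the algebraic relations \eqref{idone}--\eqref{idfour}; this only requires the pointwise nondegeneracy $\psi''(1)-2\psi'(1)\neq 0$, which for the specific $\psi$ translates into avoiding $E(0,0)^2=6/5$ when $\sigma=1$, hence the paper's remark about moving the base point. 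You instead apply $\partial_u\partial_v$ twice to reach $\psi^{(4)}(f+g)\,f'g'=0$ and exploit that $\psi(x)=6kx^{7/3}-6\sigma k^{-1}x^{5/3}$ has $\psi^{(4)}\not\equiv 0$ for every $k>0$; this bypasses the coefficient bookkeeping and the choice-of-basepoint caveat altogether, which is a genuine simplification for the case at hand. Note that the two nondegeneracy hypotheses are not comparable: $\psi(x)=x^2$ satisfies the paper's condition but not yours, so the paper's General version is not a consequence of your lemma, though both apply here (and both fail in the paper's counterexample $\psi\equiv 4$). One point worth making explicit in your argument: from $\psi^{(4)}(f+g)=0$ on the open dense set where $f'g'\neq 0$ you pass to an interval of zeros of $\psi^{(4)}$ because $f+g$ is continuous and non-constant on a connected neighborhood, so its image is a nondegenerate interval, and $\psi^{(4)}(f+g)\equiv 0$ on the whole neighborhood by continuity.
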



\begin{lemma} In the hypothesis of the previous corollary, every non-umbilical point has a neighborhood which is geodesically ruled. Since $\Sigma$ is complete the whole surface is foliated by geodesics of $M_{\sigma}$, hence, it is geodesically ruled. 
\end{lemma}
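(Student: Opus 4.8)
The plan is to read off the ruling directly from the normal form of the two fundamental forms in the asymptotic isothermal coordinates furnished by the previous corollary, and then to propagate the resulting local foliation across the (isolated) umbilical points by a completeness argument. The basic observation I will use is that a regular curve $\beta$ lying on $\Sigma\subseteq M_\sigma$ is, up to reparametrization, a geodesic of $M_\sigma$ exactly when it is at the same time an asymptotic line of $\Sigma$ — so that the normal component $l(\beta',\beta')N$ of its ambient acceleration vanishes — and an intrinsic pregeodesic of $\Sigma$ — so that the tangential component $\bar\nabla_{\beta'}\beta'$ is proportional to $\beta'$; reparametrizing by ambient arclength then produces an honest geodesic of $M_\sigma$. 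Hence the whole task reduces to exhibiting, near every non-umbilical point, a one-parameter family of curves that are simultaneously asymptotic lines and intrinsic pregeodesics and that foliate a neighbourhood.

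To do this I would fix a non-umbilical point and the coordinates $(u,v)$ on a neighbourhood $A$ as in \eqref{isoc}, with conformal factor $E$; by the previous corollary, after possibly interchanging $u$ and $v$, I may assume $E=E(u)$, so that $E_v\equiv 0$ on $A$. Since the second fundamental form in these coordinates is $l=\twomatrix 0110$, both coordinate families $\{u=\mathrm{const}\}$ and $\{v=\mathrm{const}\}$ are asymptotic lines of $\Sigma$. On the other hand, a short computation of the Levi-Civita connection of the metric $E(du^2+dv^2)$ gives $\bar\nabla_{\partial_u}\partial_u=\frac{E_u}{2E}\,\partial_u-\frac{E_v}{2E}\,\partial_v$, which, because $E_v=0$, reduces to $\frac{E_u}{2E}\,\partial_u$ and is therefore proportional to $\partial_u$; so each coordinate curve $\{v=\mathrm{const}\}$ is an intrinsic pregeodesic of $\Sigma$. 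Being also an asymptotic line, it is — after reparametrization by ambient arclength — a geodesic of $M_\sigma$, and these curves foliate $A$. This proves that every non-umbilical point has a geodesically ruled neighbourhood.

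Finally I would globalise. As $\Sigma$ is not totally geodesic its umbilical set is discrete, hence the set $\Sigma^{\circ}$ of non-umbilical points is open, dense and connected, and on it the construction above provides a foliation by arcs of geodesics of $M_\sigma$. Each such arc extends to a maximal geodesic $\gamma\colon\rr\to M_\sigma$ (by geodesic completeness of $M_\sigma$), and I claim $\gamma(\rr)\subseteq\Sigma$: the set $\{t:\gamma(t)\in\Sigma\}$ is closed because $\Sigma=\bd\Omega$ is a closed subset of $M_\sigma$, and it is relatively open because at one of its points the geodesic is forced to stay in $\Sigma$ — at a non-umbilical point via the local foliation just built, and at an isolated umbilical point by passing to the limit of the ambient geodesics through nearby non-umbilical points, using continuous dependence of geodesics on initial data together with the closedness of $\Sigma$. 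Consequently every point of $\Sigma$ lies on a complete geodesic of $M_\sigma$ contained in $\Sigma$, i.e. $\Sigma$ is geodesically ruled. I expect the main obstacle to be exactly this last step: near the isolated umbilical points the asymptotic line field degenerates, and one must check that the local rulings fit together into a genuine global foliation and that no leaf escapes $\Sigma$ there.
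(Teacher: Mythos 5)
Your local argument coincides with the paper's: in the same isothermal asymptotic coordinates with $E_v=0$, you check that the $v=\mathrm{const}$ curves are simultaneously intrinsic (pre)geodesics of $\Sigma$ (your computation $\bar\nabla_{\partial_u}\partial_u=\frac{E_u}{2E}\partial_u$ is the same information as the paper's $\scal{\bar\nabla_{e_1}e_1}{e_2}=(E^{-1/2})_v=0$) and asymptotic lines, hence geodesics of the ambient space-form. Your globalization by the open--closed argument on the maximal ambient geodesic is spelled out in more detail than the paper's one-line appeal to completeness, but it is the same idea, and you correctly identify the only delicate point, the extension of the ruling past the isolated umbilics.
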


\begin{proof} Assume that $E=E(u)$, so that $E_v=0$. We claim that the lines $v=c$ are geodesics of $\Sigma$; since they are also asymptotic lines, they are necessarily geodesics also of the ambient $M_{\sigma}$. This gives the claimed foliation by geodesics. We now prove the claim. 
Consider the local orthonormal frame $(e_1,e_2)$ obtained by normalizing $(\bd_1,\bd_2)$:
$$
\twosystem
{e_1=\dfrac{1}{\sqrt{E}}\bd_1}
{e_2=\dfrac{1}{\sqrt{E}}\bd_2}
$$
Now,  the integral curves of $e_1$ are the asymptotic lines $v=c$, so that if $\gamma(t)$ is a unit speed parametrization of $v=c$, we have $\gamma'(t)=e_1$ for all $t$. Then, $v=c$ is a geodesic if and only if $\bar\nabla_{e_1}{e_1}=0$. Clearly, it is enough to show that
$$
\scal{\bar\nabla_{e_1}e_1}{e_2}=0.
$$
The Christoffel symbols of the parametrization are:
$$
\begin{aligned}
\Gamma^1_{11}&=\Gamma^2_{12}&=\dfrac{E_u}{2E}\quad \Gamma^2_{11}=-\dfrac{E_v}{2E}\\
\Gamma^2_{22}&=\Gamma^1_{12}&=\dfrac{E_v}{2E}\quad \Gamma^1_{22}=-\dfrac{E_u}{2E}
\end{aligned}
$$
and a straightforward verification gives:
$$
\scal{\bar\nabla_{e_1}e_1}{e_2}=\Big(\dfrac{1}{\sqrt{E}}\Big)_v=0
$$
as claimed.
\end{proof}

\begin{remark}
We observe that a similar calculation shows that the geodesic curvature (up to sign) of the line $u=c$ is given by
$$
\scal{\bar\nabla_{e_2}e_2}{e_1}=\Big(\dfrac{1}{\sqrt{E}}\Big)_u
$$
and hence it is constant along $u=c$ because $E_v=E_{uv}=0$.
\end{remark}


\section{End of proof}\label{Sec:Final}

Summarizing, let $\Omega$ be a domain with the $\frac 12$-property. Then its boundary is a minimal surface satisfying the divergence condition, and by the previous section it is geodesically ruled. To  complete the proof,
we give below the classification of the embedded, minimal, geodesically ruled surfaces in space-forms $M_{\sigma}$ with $\sigma\in\{0,1,-1\}$. Besides the totally geodesic surfaces, we have:

\begin{itemize}
\item in $\mathbb R^3$: the right helicoid (Catalan \cite{Ca42});

\item in $\mathbb H^3$: do Carmo and Dajczer \cite{dCD83}: we have the one-parameter family of minimal helicoids described in the introduction;

\item in $\mathbb S^3$: the Clifford torus. We explain  the argument in the next remark. 
\end{itemize}

\begin{remark}\label{Rmk_CliffordTorus}
In $\mathbb S^3$ the classification is due to Lawson \cite{La70}: there he shows that any ruled minimal surface is a {\it spherical helicoid}, a surface indexed by $\alpha>0$ and parametrized as follows:
\begin{align}
X^\alpha:(u,v) & \mapsto \left(\begin{array}{c}
\cos(\alpha u) \cos(v)\\ \sin(\alpha u) \cos(v)\\ \cos(u) \sin(v)\\ \sin(u) \sin(v)
\end{array} \right).
\end{align}
The Clifford torus corresponds to $\alpha=1$. Now, if $\alpha$ is irrational, then $X^{\alpha}$ is easily seen to be an injective immersion; to be proper,  the image must be  closed, hence compact. But then $X^{\alpha}$ can't be a homeomorphism onto its image.  Hence the immersion is not proper and the image is not the boundary of any domain. If $\alpha$ is rational then we recover the family of immersed minimal tori and $X^{\alpha}$ is an embedding only if $\alpha=1$, which corresponds to the Clifford torus.
\end{remark}
 
With this, the proof of the main theorem is complete.  

\smallskip


\bigskip

\textbf{Acknowledgments.} The authors would like to thank A. Grigor'yan, J. Masamune, M. Muratori and S. Pigola for useful suggestions and discussions about uniqueness results for the heat equation in stochastically complete Riemannian manifolds. The first author is a member of the Gruppo Nazionale per l'Analisi Matematica, la Probabilità e le loro Applicazioni (GNAMPA) of the Istituto Nazionale di Alta Matematica (INdAM). The second author is a member of the Gruppo Nazionale per le Strutture Algebriche, Geometriche e le loro Applicazioni
(GNSAGA) of the Istituto Nazionale di Alta Matematica (INdAM).


\appendix



\section{Dirichlet flow expansion in unbounded smooth domains}\label{Sec:AppendixC}

Let $(M,\scal{\cdot}{\cdot})$ be a geodesically complete Riemannian manifolds and $\Omega\subseteq M$ a smooth bounded domain. In \cite{Sa04} the second author managed to prove the following asymptotic expansion of the heat flow in the context of compact smooth domains.

\begin{theorem}\label{Thm:SavoExpansion}
Assume $\overline{\Omega}$ is compact. Let $u_0\in C^\infty(\overline \Omega)$ and consider the Dirichlet temperature $u_D$ on $\Omega$ with initial data $u_0$, as in \eqref{Eq:DirichletTemperature}.
For any $x\in \partial \Omega$ it holds
\begin{align}
\frac{\partial u_D}{\partial N} (t,x) \sim \frac{u_0(x)}{\sqrt{\pi}} t^{-\frac{1}{2}}+ \sum_{k=0}^{\infty} \gamma_k(x) t^{\frac{k}{2}} \quad as\ t\to 0
\end{align}
for certain invariants $\gamma_k\in C^{\infty}(\Omega)$ that depend on $u_0$ and the extrinsic geometry of the boundary.
\end{theorem}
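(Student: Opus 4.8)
The plan is to reconstruct, in outline, the heat‑parametrix argument of \cite{Sa04}, which localizes the problem to a boundary collar. First I would fix $x_0\in\partial\Omega$ and note that, since $\overline{\Omega}$ is compact and $u_D=e^{-t\Delta}u_0$ for the Dirichlet Laplacian, off‑diagonal Gaussian estimates for the Dirichlet heat kernel imply that $\frac{\partial u_D}{\partial N}(t,x_0)$ is, modulo an error that is $O(t^{\infty})$ as $t\to 0^{+}$, determined by the restriction of $u_0$ to an arbitrarily thin tubular neighborhood $U=\{\rho<\varepsilon\}$ of $\partial\Omega$ and by the Riemannian geometry of $U$; here $\rho=d(\cdot,\partial\Omega)$ is smooth on $U$. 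Hence it suffices to produce, for each fixed $N$, an approximate solution on $U$ whose inward normal flux at the boundary reproduces the claimed partial sum up to an error $o(t^{N/2})$.

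For this I would work in Fermi coordinates $(\rho,y)$, $y\in\partial\Omega$, in which $N=\nabla\rho$ and $\Delta=\Delta_R+\bar\Delta_\rho$ with $\Delta_R=-\partial_\rho^{2}+\eta\,\partial_\rho$, $\eta=\Delta\rho$ the mean curvature of the parallel hypersurface $\{\rho=\mathrm{const}\}$ and $\bar\Delta_\rho$ its intrinsic Laplacian. I would seek a formal solution of $\heat u=0$ with $u|_{\rho=0}=0$ and $u|_{t=0}=u_0$ of the self‑similar form $u(t,\rho,y)\sim\sum_{j\geq 0}t^{j/2}\Phi_j(\rho/\sqrt t,\rho,y)$, where each profile $\Phi_j(s,\rho,y)$ is a finite combination of $e^{-s^{2}/4}$ times a polynomial in $s$ and of the error‑function profile $\int_0^{s}e^{-\tau^{2}/4}\,d\tau$, with coefficients depending smoothly on $(\rho,y)$. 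Substituting and matching powers of $t$ and the two profile types yields a triangular recursion: $\Phi_0$ is the flat half‑line solution carrying the Dirichlet datum $u_0(0,y)$, and each $\Phi_j$ solves a linear ODE in $s$ whose forcing is a differential expression in $(\rho,y)$ — built from $\partial_\rho$, $\eta$ and $\bar\Delta_\rho$ — applied to $\Phi_0,\dots,\Phi_{j-1}$. Differentiating in $\rho$ and setting $\rho=0$ produces the boundary flux $\frac{u_0(x_0)}{\sqrt\pi}t^{-1/2}+\sum_{k=0}^{N}\gamma_k(x_0)t^{k/2}+o(t^{N/2})$, and a bookkeeping of the recursion identifies $\gamma_k=(1+\tfrac{k}{2})D_{k+2}u_0|_{\partial\Omega}$, where $D_m$ is the degree‑$(m-1)$ homogeneous polynomial in $\mathcal N=2\langle\nabla\cdot,\nabla\rho\rangle-(\cdot)\Delta\rho$ and $\Delta$ specified in Definition 2.5 of \cite{Sa04}; in particular $\gamma_0=D_2u_0|_{\partial\Omega}$ recovers \eqref{gammazero} when $u_0\equiv 1$.

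The genuinely analytic step is the remainder estimate. Truncating the formal series and cutting it off away from $\partial\Omega$ gives $\tilde u_N$; then $v_N:=u_D-\tilde u_N$ solves a Dirichlet problem on $\Omega$ with vanishing boundary data, small initial data, and a right‑hand side that is $O(t^{(N-c)/2})$ in sup norm, $c$ depending only on $\dim M$. Duhamel's formula together with the maximum principle (or an energy inequality) gives $v_N=O(t^{(N-c)/2+1})$ uniformly on $\Omega$, and parabolic boundary regularity up to $\{\rho=0\}$ — where compactness of $\overline{\Omega}$ and smoothness of $u_0$ and $\partial\Omega$ are used — upgrades this to an estimate for $\partial v_N/\partial N$ on $\partial\Omega$ at the same rate. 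Letting $N\to\infty$ yields the asymptotic expansion, with $\gamma_k\in C^{\infty}(\partial\Omega)$ as described.

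I expect the main obstacle to be precisely this last transfer: passing from sup‑norm smallness of the interior remainder to smallness of its boundary normal derivative, uniformly as $t\to 0^{+}$, which requires Schauder‑type parabolic estimates in a boundary layer of controllable (shrinking) width rather than in a fixed collar. A secondary, purely algebraic nuisance is verifying that the recursion of the second step reproduces exactly the operators $D_m$ of \cite{Sa04} and the numerical prefactor $1+\tfrac{k}{2}$, rather than some other normalization.
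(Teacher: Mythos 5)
The paper does not prove this statement: it is Theorem 2.1 of \cite{Sa04}, quoted verbatim, and the content of Appendix~\ref{Sec:AppendixC} is only the reduction of the unbounded case to this compact one, so there is no internal proof to compare against. Your outline is a reasonable parametrix scheme of the same general type: localize near a boundary point, match a formal boundary-layer expansion order by order in $\sqrt t$, then estimate the remainder by Duhamel/comparison. What the paper does disclose about \cite{Sa04} --- namely $\gamma_k = (1+\tfrac k2)\,D_{k+2}u_0|_{\partial\Omega}$ with each $D_m$ a homogeneous degree-$(m-1)$ polynomial in $\mathcal N$ and $\Delta$, where $\mathcal N\phi = 2\langle\nabla\phi,\nabla\rho\rangle - \phi\,\Delta\rho$ --- points to a more structured ansatz than yours: a sum of separated products $\sum_j\phi_j(x)\,F_j(t,\rho(x))$ with one-dimensional profiles $F_j$. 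This is because $\Delta\bigl(\phi\,F(\rho)\bigr) = (\Delta\phi)F - (\mathcal N\phi)\,F' - \phi\,F''$, so under a product ansatz the order-matching recursion is automatically a polynomial in $\mathcal N$ and $\Delta$, which is exactly the content of the theorem's formula. Your more general profiles $\Phi_j(\rho/\sqrt t,\rho,y)$ are not wrong, but a nontrivial further algebraic reduction is then required to land on this normal form; without it one only obtains ``some smooth local invariants,'' not the computable operators $D_{k+2}$ that the statement (and its use in Section~\ref{Sec:divergencecondition}) actually requires.

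You also correctly isolate the analytically hard step: passing from sup-norm smallness of $v_N = u_D - \tilde u_N$ on $\Omega$ to a bound on $\partial v_N/\partial N$ on $\partial\Omega$ that is uniform as $t\to 0^+$. The maximum principle only gives interior control; what is needed is a parabolic boundary gradient estimate in a layer of width comparable to $\sqrt t$ (equivalently, Gaussian decay of $\nabla_x k_D(t,x,y)$ up to the boundary). This is doable in the compact case but is precisely the step this paper does not revisit and \cite{Sa04} must carry out, so as written your remainder estimate remains a plan rather than a proof.
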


In this appendix we extend the previous result to the setting of noncompact domains. We give an idea of the proof. First, we observe that the expansion is purely local, that is, it depends 
only on the behavior of $u_0$ and the geometry near the given point $x$. In fact, as recalled in \eqref{Eq_Gamma_k}, one has, for all $k$:
$$
\gamma_k(x)=\left(1+\frac k2 \right)D_{k+2}u_0(x),
$$
where $D_{k+2}$ is a certain differential operator acting on $C^{\infty}(U)$, where $U$ is an arbitrary small neighborhood of $x$ in $\Omega$ (we refer to \cite{Sa04} for the recursive formula computing the operators $D_n$). The second observation is that the asymptotic development is polynomial in $\sqrt t$: hence, any change in the domain which occours far from a given point $\bar x$ and which produces only an exponentially decreasing effect as $t\to 0$ will not affect the expansion at $\bar x$. 

\smallskip

We proceed as follows. Let $\Omega$ be a (possibly unbounded) smooth domain, fix $\bar x\in\bd\Omega$ and consider a bounded open set $V\subseteq\Omega$ with smooth boundary $\bd V$ such that:

\begin{enumerate}
\item[1.] $\bd V\cap\bd\Omega$ contains an open neighbourhood $\Gamma$ of $\bar x$ in $\bd\Omega$,
\item[2.] $d^M(\bar x,\bd V\cap\Omega)\geq c$ for a given positive constant $c$.
\end{enumerate}
\begin{figure}[h!]
\centering
\includegraphics[width=13cm]{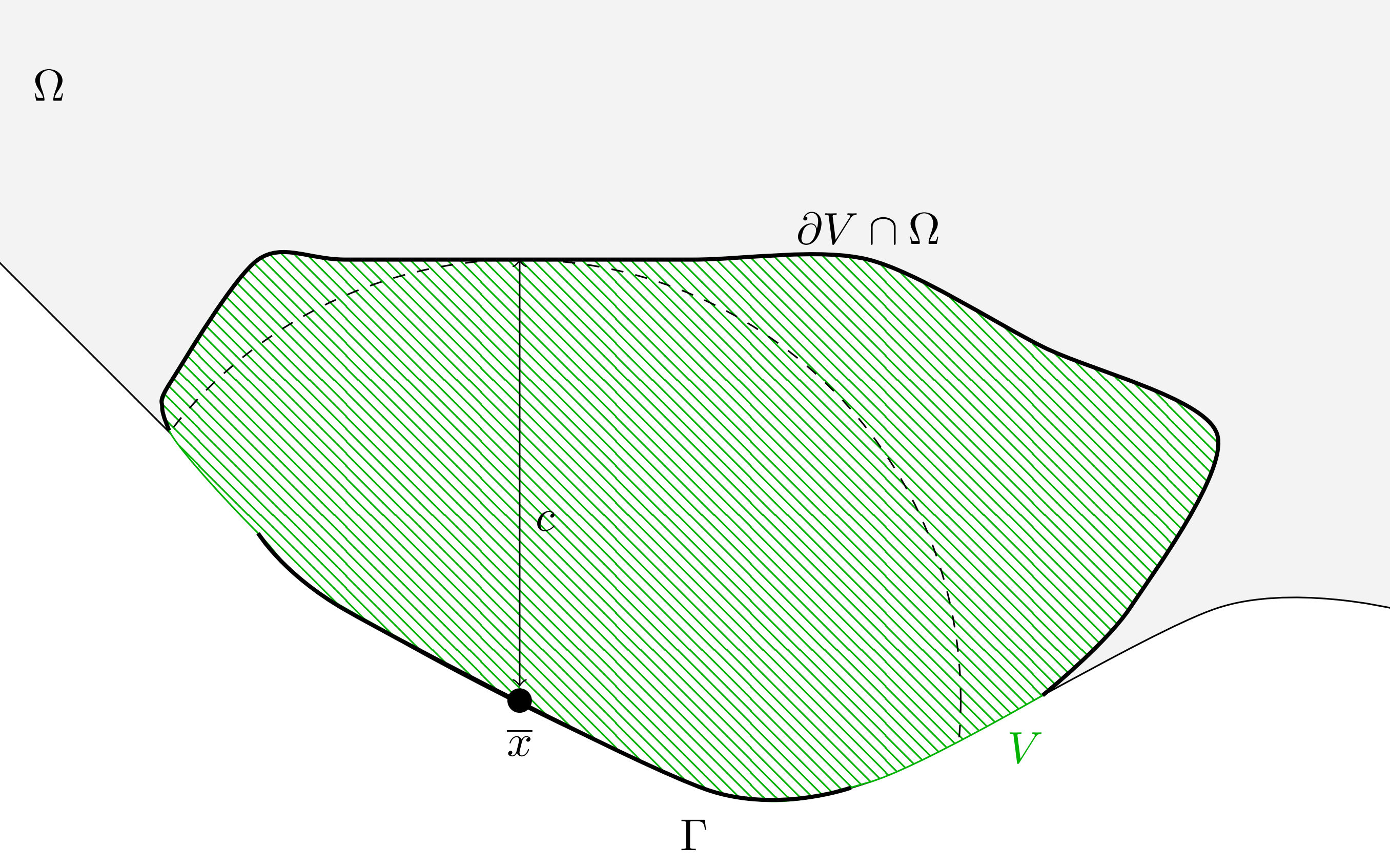}
\end{figure}
Let  $u_{\Omega}(t,x)$ (resp. $u_V(t,x)$) be the solution of the heat equation on $\Omega$ (resp. on $V$) with Dirichlet boundary conditions and initial data $u_0\in C^{\infty}(\Omega)$ (resp. 
${u_0}_{\vert V}\in C^{\infty}(V)$). The scope is to show that
\begin{equation}\label{otemme}
\derive{u_{\Omega}}{N}(t,\bar x)-\derive{u_{V}}{N}(t,\bar x)=O(t^m)\quad\text{for all $m\in\mathbb N$}
\end{equation}
Once this is proved, we see that $\derive{u_{\Omega}}{N}(t,\bar x)$ admits an asymptotic expansions which coincides, at any point $\bar x\in\bd\Omega$,  with that of $\derive{u_{V}}{N}(t,\bar x)$, and the proof is complete.

\medskip

We then show \eqref{otemme}. We first recall Duhamel principle, which provides a representation formula for smooth functions defined over a compact manifold in terms of any heat kernel (fundamental solution of the heat equation).

\begin{proposition}[Compact Duhamel Principle]\label{Prop_CompactDuhamelPrinciple}
Let $M$ be a compact Riemannian manifold with nonempty boundary $\bd M$, $p(t,x,y)$ a heat kernel on $M$ and $w(t,x)$ a smooth function on $[0,\infty)\times M$.

Then, for every $(t,x)\in [0,\infty)\times M$
\begin{align}
\begin{split}
w(t,x)= & \int_M p(t,x,y)\ w(0,y) \dvol_y -\int_0^t \int_{\partial M} p(t-\tau, x, y)\ \frac{\partial w}{\partial N_y}(\tau,y) \dsup_y \dint{\tau}\\
& + \int_0^t \int_{\partial M} \frac{\partial p}{\partial N_y}(t-s, x, y)\ w(s,y) \dsup_y \dint{s}\\
& + \int_0^t \int_M p(t-s,x,y)\ \left( \frac{\partial}{\partial t}+\Delta\right)w(s,y) \dvol_y \dint{s},
\end{split}
\end{align}
where $N$ is the inner normal field to $\partial M$. 

\end{proposition}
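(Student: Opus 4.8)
The plan is to obtain the identity as a space--time Green formula, i.e.\ via the classical computation underlying Duhamel's principle. The case $t=0$ is immediate (only the first term survives and equals $w(0,x)$), so I fix $t>0$ and $x\in M$ and introduce the auxiliary function
\[
I(s)\doteq\int_M p(t-s,x,y)\,w(s,y)\dvol_y,\qquad s\in(0,t).
\]
Because $M$ is compact, for $s<t$ the function $y\mapsto p(t-s,x,y)$ is smooth on $M$ and $w$ is smooth on $[0,\infty)\times M$; hence $I$ is $C^1$ on $(0,t)$ and may be differentiated under the integral sign. The proposition will then follow by writing $w(t,x)=I(t^-)=I(0^+)+\int_0^t I'(s)\dint s$, once $I'$ is computed and the two endpoint values of $I$ are identified.

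To compute $I'(s)$ I differentiate in $s$, obtaining a term with $\partial_t p$ and a term with $\partial_s w$. Since the heat kernel of a compact manifold is symmetric in its space variables and solves the heat equation in each of them, one has $\partial_t p(t-s,x,y)=-\Delta_y p(t-s,x,y)$, which replaces the first term by $\Delta_y p$. I then apply Green's second identity on $M$ to $f=p(t-s,x,\cdot)$ and $g=w(s,\cdot)$, carefully tracking conventions: with the \emph{positive} Laplacian and $N$ the \emph{inner} unit normal it reads $\int_M(g\Delta f-f\Delta g)\dvol=\int_{\partial M}\bigl(g\,\frac{\partial f}{\partial N}-f\,\frac{\partial g}{\partial N}\bigr)\dsup$. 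This trades $\int_M w\,\Delta_y p$ for $\int_M p\,\Delta_y w$ plus the two boundary integrals, and since $\partial_s w+\Delta_y w=\heat w$ we land at
\[
I'(s)=\int_M p(t-s,x,y)\,\heat w(s,y)\dvol_y+\int_{\partial M}\Bigl(\tfrac{\partial p}{\partial N_y}(t-s,x,y)\,w(s,y)-p(t-s,x,y)\,\tfrac{\partial w}{\partial N_y}(s,y)\Bigr)\dsup_y.
\]
Integrating this over $s\in(0,t)$ and rearranging the three resulting integrals reproduces exactly the asserted formula, once the endpoint behaviour of $I$ is pinned down.

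The endpoint $s\to0^+$ is harmless: by continuity $I(0^+)=\int_M p(t,x,y)\,w(0,y)\dvol_y$, the first term of the claim. The endpoint $s\to t^-$ is the only real obstacle: there $t-s\to0^+$, the kernel $p(t-s,x,\cdot)$ concentrates at $x$, and the boundary terms in $I'$ become singular, so one must invoke the approximate-identity property of the heat kernel of a compact manifold to conclude $\lim_{s\to t^-}I(s)=w(t,x)$ and to ensure that $\int_0^t I'(s)\dint s$ converges. I would handle this in the standard way: integrate first over $(\delta,t-\varepsilon)$, control the (now smooth) integrands near $s=t$ via the usual short-time Gaussian bounds on $p$ and $\partial p/\partial N$, and let $\varepsilon,\delta\to0^+$. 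No extra care is needed when $x\in\partial M$, since the (Dirichlet or Neumann) heat kernel of the compact manifold remains an approximate identity up to the boundary. This completes the plan.
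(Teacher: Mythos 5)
Your proposal is correct and elaborates exactly the argument the paper compresses into ``integration by parts'': define $I(s)=\int_M p(t-s,x,y)\,w(s,y)\dvol_y$, differentiate in $s$, trade $\Delta_y p$ for $\Delta_y w$ via Green's second identity (with the geometer's sign convention and inner normal), and integrate over $s\in(0,t)$. One minor overstatement: for the \emph{Dirichlet} heat kernel (the one actually used later in the paper) the approximate-identity property fails at $x\in\partial M$, where $p(\tau,x,\cdot)\equiv 0$ for $\tau>0$; the identity should therefore be read for interior $x$ (which is all the paper needs, since $\derive{}{N_{\bar x}}$ is taken as a limit from the interior), rather than ``no extra care is needed when $x\in\partial M$.''
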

\begin{proof}: integration by parts. \end{proof}

We now take $M=V$ and $p=k_V$, the Dirichlet heat kernel of $V$. Choosing $w=u_{\Omega}$ we see that  $k_V$ vanishes on $\bd V$ and $\left( \frac{\partial}{\partial t}+\Delta \right)w=0$. Hence, at $x\in V$ we have:
$$
\begin{aligned}
u_{\Omega}(t, x)&=\int_V k_V(t, x,y)u_0(t,y)\dvol_y+\int_0^t \int_{\bd V} \frac{\partial k_V}{\partial N_y}(t-s, x, y)\ u_{\Omega}(s,y) \dsup_y \dint{s}.
\end{aligned}
$$
Since the restriction of $u_{\Omega}$ to $\bd V$ is supported on $A\doteq \overline{\Omega\cap\bd V}$, we see that:
$$
u_{\Omega}(t, x)-u_V(t, x)=\int_0^t \int_{A} \frac{\partial k_V}{\partial N_y}(t-s, x, y)\ u_{\Omega}(s,y) \dsup_y \dint{s}
$$
Fix $\bar x\in\bd V\cap\bd\Omega$ as above. Since $A$ is at distance $c>0$ from $\bar x$, we see that the integrand in the right-hand side of previous equation is $C^\infty$ on $[0,t]\times A$ for all $x$ sufficiently close to $\overline{x}$. We can then differentiate under the integral sign in the normal direction from $\overline{x}$ and obtain
\begin{align}
\frac{\partial u_{\Omega}}{\partial N_{\overline{x}}}(t,\overline{x})-\frac{\partial u_V}{\partial N_{\overline{x}}}(t,\overline{x}) = \int_0^t \int_{A} \frac{\partial}{\partial N_{\overline{x}}}\frac{\partial k_V}{\partial N_y}(t-s,\overline{x},y)\ u_\Omega (s,y) \dsup_y \dint{s}.
\end{align}
Now, since $d^M(\overline{x},y)>c>0$ for every $y\in A$, since the Dirichlet heat kernel $k_D$ is rapidly decaying off the diagonal (see \cite{Ch84, Grie04}), we see that
\begin{align}
\frac{\partial}{\partial N_{\overline{x}}}\frac{\partial k_V}{\partial N_y}(t-s,\overline{x},y)=O((t-s)^m)
\end{align}
for every $m\in \mathbb N$. Since $u_\Omega 
(s,y)=O(1)$, we get the final claim \eqref{otemme}.


\section{Proof of theorem \ref{Thm_UniquenessDirichletAppendix}}

\subsection{Proof of Step 1}\label{Sec:Rigidity_Uniqueness}
Let $(M,\scal{\cdot}{\cdot})$ be an arbitrary smooth Riemannian manifolds and let $p$ be its unique minimal positive heat kernel. We recall that $M$ is said to be \textit{stochastically complete} if for all $y\in M$ and for all $t>0$ 
\begin{align}
    \int_M p(t,x,y) \dvol_x=1.
\end{align}
By the minimality $p$, and the fact that for any fundamental solution $q$ of the heat equation one has
\begin{align}
    \int_M q(t,x,y) \dvol_x\leq 1 \quad \forall y\in M\ \forall t>0,
\end{align}
it follows that if $(M,\scal{\cdot}{\cdot})$ is stochastically complete, then every fundamental solution of the heat equation coincides $p$.

Several characterizations of stochastic completeness are present in literature, involving analytical and geometric conditions and using both analytical and probabilistic approaches, see for instance \cite{alias2016maximum, BM23, Gr09, Gr99, GIM20, GIMP23, PRS03, PRS05}. Some of them involve uniqueness results for both elliptic and parabolic PDEs settled on the whole manifold. For the purpose of the present work, we are interested in the next two characterizations. The first one involves the Cauchy problem for the heat equation (see \cite{Gr99, GIM20}).

\begin{theorem}\label{Thm_StandardSCUniqueness}
A Riemannian manifold $(M,g)$ is stochastically complete if and only if the constant null function is the unique bounded solution to the Cauchy problem
    \begin{align}
        \begin{cases}
            \left(\frac{\partial}{\partial t}+\Delta\right) u=0 & in\ (0,+\infty)\times M\\ u(0,\cdot)=0 & on\ M.
        \end{cases}
    \end{align}
\end{theorem}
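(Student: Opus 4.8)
I would establish the equivalence by proving the two implications separately; the substantive content lies entirely in one of them. For the implication ``uniqueness $\Rightarrow$ stochastic completeness'' I would argue directly with the minimal heat kernel: set
$$
w(t,x)\doteq 1-\int_M p(t,x,y)\dvol_y .
$$
Local parabolic estimates for $p$ let one differentiate under the integral sign, so $w$ solves $\heat w=0$ on $(0,\infty)\times M$; since $p$ is the minimal positive heat kernel one has $0\le\int_M p(t,x,y)\dvol_y\le 1$, so $w$ is bounded, and $\int_M p(t,x,y)\dvol_y\to 1$ as $t\to 0^+$ gives $w(0,\cdot)=0$. The assumed uniqueness then forces $w\equiv 0$, i.e.\ $\int_M p(t,x,y)\dvol_y=1$ for all $t>0$ and $x\in M$; using the symmetry $p(t,x,y)=p(t,y,x)$, this is precisely stochastic completeness.

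For the converse, ``stochastic completeness $\Rightarrow$ uniqueness'', the plan is to compare a bounded solution with the Dirichlet heat kernels along an exhaustion, using the Compact Duhamel Principle (Proposition \ref{Prop_CompactDuhamelPrinciple}). Let $u$ be a bounded solution of $\heat u=0$ on $(0,\infty)\times M$ with $u(0,\cdot)=0$ and $|u|\le C$; fix $T>0$ and $x_0\in M$. Choose an exhaustion $M=\bigcup_k\Omega_k$ by relatively compact smooth domains with $\overline{\Omega_k}\subset\Omega_{k+1}$ and $x_0\in\Omega_1$, and let $k_{\Omega_k}$ be the Dirichlet heat kernel of $\overline{\Omega_k}$. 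Applying Proposition \ref{Prop_CompactDuhamelPrinciple} on $\overline{\Omega_k}$ with $p=k_{\Omega_k}$ and $w=u$: the volume (source) term vanishes because $\heat u=0$, the initial term vanishes because $u(0,\cdot)=0$, and one of the two boundary terms vanishes because $k_{\Omega_k}(t,x_0,y)=0$ whenever $y\in\bd\Omega_k$; what survives is
$$
u(T,x_0)=\int_0^T\int_{\bd\Omega_k}\frac{\partial k_{\Omega_k}}{\partial N_y}(T-s,x_0,y)\,u(s,y)\,\dsup_y\,ds .
$$
Applying the same proposition with $w\equiv 1$ (so $\heat w=0$, $w(0,\cdot)=1$) yields in the same way
$$
\int_0^T\int_{\bd\Omega_k}\frac{\partial k_{\Omega_k}}{\partial N_y}(T-s,x_0,y)\,\dsup_y\,ds=1-\int_{\Omega_k}k_{\Omega_k}(T,x_0,y)\dvol_y .
$$
Since $N$ is the \emph{inner} unit normal and $k_{\Omega_k}(s,x_0,\cdot)\ge 0$ vanishes on $\bd\Omega_k$, the boundary density $\partial k_{\Omega_k}/\partial N_y$ is nonnegative on $\bd\Omega_k$, so combining the two displays gives
$$
|u(T,x_0)|\le C\Big(1-\int_{\Omega_k}k_{\Omega_k}(T,x_0,y)\dvol_y\Big).
$$
Finally $k_{\Omega_k}\uparrow p$ pointwise along the exhaustion, so by monotone convergence $\int_{\Omega_k}k_{\Omega_k}(T,x_0,y)\dvol_y\uparrow\int_M p(T,x_0,y)\dvol_y$, which equals $1$ by stochastic completeness (again via $p(t,x,y)=p(t,y,x)$). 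Hence $u(T,x_0)=0$, and since $T$ and $x_0$ were arbitrary, $u\equiv0$.

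The only delicate points, none of them serious, are: (i) the regularity of $u$ up to $t=0$ needed to use Duhamel's formula on $[0,T]$ — I would circumvent this by applying the formula on $[\e,T]$, where $u$ is smooth by interior parabolic regularity, and letting $\e\to 0^+$ using $|u|\le C$ and $u(\e,\cdot)\to 0$ uniformly on the compact set $\overline{\Omega_k}$; (ii) the monotone increasing convergence $k_{\Omega_k}\uparrow p$ of Dirichlet heat kernels to the minimal heat kernel, which is classical; and (iii) reconciling $\int_M p(T,x_0,y)\dvol_y$ with the definition $\int_M p(t,x,y)\dvol_x=1$ of stochastic completeness, which is immediate from the symmetry of $p$. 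Thus the main — and essentially the only — step of substance is the bookkeeping in the two Duhamel identities above.
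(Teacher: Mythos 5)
Your proof is correct. Note that the paper itself does not prove Theorem~\ref{Thm_StandardSCUniqueness}: it is stated as a known characterization of stochastic completeness, with references to \cite{Gr99, GIM20}, and is then used as a black box. What you have supplied is a self-contained proof, and it follows the standard line of argument found in Grigor'yan's work. The ``uniqueness $\Rightarrow$ stochastic completeness'' direction via $w(t,x)=1-\int_M p(t,x,y)\,\dvol_y$ is exactly right. The converse via the Compact Duhamel Principle on an exhaustion $\Omega_k\uparrow M$ is also sound: the two Duhamel identities, the nonnegativity of $\partial k_{\Omega_k}/\partial N_y$ on $\bd\Omega_k$ (since $k_{\Omega_k}(t,x,\cdot)\ge 0$ in $\Omega_k$ and vanishes on the boundary, with $N$ the inner normal), and the monotone convergence $k_{\Omega_k}\uparrow p$ are all correctly deployed, and stochastic completeness then kills the bound $|u(T,x_0)|\le C\bigl(1-\int_{\Omega_k}k_{\Omega_k}(T,x_0,y)\,\dvol_y\bigr)$ in the limit. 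The one point to be explicit about is the regularity at $t=0$: the hypothesis $u(0,\cdot)=0$ must be read as continuity of $u$ up to $\{t=0\}$ (as the paper itself does for its application in Theorem~\ref{Thm_UniquenessDirichletAppendix}, invoking \cite[Theorem 7.16]{Gr09}), after which $u(\varepsilon,\cdot)\to 0$ uniformly on the compact $\overline{\Omega_k}$ and your $\varepsilon\to 0^+$ limit goes through. With that understood, the argument is complete.
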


The second characterization is given in terms of elliptic differential equations and states as follows (for a reference see \cite[Theorem 2.14]{alias2016maximum} and the comments below).

\begin{theorem}[\cite{alias2016maximum}]\label{Thm:SCUniqueness}
    A Riemannian manifold $(M,\scal{\cdot}{\cdot})$ is stochastically complete if and only if for every $\lambda>0$ the only bounded, nonnegative and $C^0(M)\cap W^{1,2}_{loc}(M)$ distributional solution to $\Delta U\leq -\lambda U$ is the constant null function.
\end{theorem}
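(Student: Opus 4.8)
The plan is to prove the equivalence by its two implications, using the objects already at hand: the minimal positive heat kernel $p(t,x,y)$ of $M$ and the parabolic characterization of stochastic completeness (the subsolution form of Theorem \ref{Thm_StandardSCUniqueness}; see \cite{Gr99}); throughout, $\Delta$ is the positive Laplacian. For \emph{stochastic completeness $\Rightarrow$ the stated uniqueness}, I would turn a bounded nonnegative subsolution of the elliptic inequality into a bounded subsolution of the heat equation on a finite time strip: given $U\in C^0(M)\cap W^{1,2}_{loc}(M)$ bounded, $U\ge 0$, with $\Delta U\le -\lambda U$ distributionally, and a finite $T>0$, set $s(t,x):=e^{\lambda t}U(x)$ on $(0,T)\times M$. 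Then $s$ is bounded, continuous up to $t=0$ with $s(0,\cdot)=U$, and $\big(\tfrac{\partial}{\partial t}+\Delta\big)s=e^{\lambda t}(\lambda U+\Delta U)\le 0$ in the distributional sense, so $s$ is a bounded weak subsolution of the heat equation — and this is exactly where the two regularity hypotheses are used: $U\in C^0$ gives continuity up to the initial time, and $U\in W^{1,2}_{loc}$ makes $s$ an admissible (energy) subsolution. The parabolic maximum principle characterizing stochastic completeness then yields $\sup_{(0,T)\times M}s\le\sup_M U$; but $\sup_{(0,T)\times M}s=e^{\lambda T}\sup_M U$ whenever $\sup_M U>0$, and $e^{\lambda T}>1$, so necessarily $\sup_M U\le 0$, i.e.\ $U\equiv 0$ since $U\ge 0$.

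For the converse, \emph{the uniqueness $\Rightarrow$ stochastic completeness}, I would argue by contraposition. If $M$ is not stochastically complete, the deficiency function $\phi(t,y):=1-\int_M p(t,x,y)\,\dvol_x$ is smooth and caloric on $(0,\infty)\times M$, satisfies $0\le\phi\le 1$ and $\phi(0^+,\cdot)=0$, and is not identically zero — precisely because stochastic completeness fails. Its time Laplace transform $U_\lambda(y):=\int_0^\infty e^{-\lambda t}\phi(t,y)\,\dint t$ is then bounded ($0\le U_\lambda\le 1/\lambda$), nonnegative, and not identically zero (by continuity of $\phi$ together with $\phi\not\equiv 0$); and a routine distributional computation — multiply the heat equation for $\phi$ by $e^{-\lambda t}$, integrate in $t$, use Fubini and an integration by parts in $t$ whose boundary terms vanish since $\phi$ is bounded with $\phi(0^+,\cdot)=0$ — gives the \emph{equality} $\Delta U_\lambda=-\lambda U_\lambda$ on $M$. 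By elliptic regularity $U_\lambda\in C^\infty(M)$, so it is a nontrivial bounded nonnegative $C^0\cap W^{1,2}_{loc}$ solution of $\Delta U\le -\lambda U$, contradicting the uniqueness property for this $\lambda$; hence that property, for one (a fortiori for every) $\lambda>0$, forces stochastic completeness.

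\emph{Main obstacle.} I expect the genuine difficulty to be the first implication, and within it the precise invocation of the parabolic maximum principle for \emph{weak} subsolutions: one has to cite the sharp form of Grigor'yan's result (of which Theorem \ref{Thm_StandardSCUniqueness} is the familiar ``solution'' version) asserting that, on a stochastically complete manifold, bounded $W^{1,2}_{loc}$ subsolutions of the heat operator on $(0,T)\times M$ that are continuous up to $t=0$ obey the maximum principle with respect to their initial datum. Once that is granted, the substitution $U\rightsquigarrow e^{\lambda t}U$ makes the conclusion immediate. The second implication is comparatively soft, the only delicate point being the justification of $\Delta U_\lambda=-\lambda U_\lambda$, i.e.\ differentiating under the time integral and discarding the boundary contributions, which is routine once one knows $\phi$ is smooth, bounded, and vanishes at $t=0$.
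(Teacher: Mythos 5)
The paper does not give its own proof of this statement: it cites it as Theorem~2.14 of \cite{alias2016maximum} and uses it as a black box. So there is no in-paper argument to compare against; I can only assess your proposal on its own terms. Your overall reduction is sound and is, in fact, close to the standard way the elliptic and parabolic characterizations of stochastic completeness are linked in the literature (e.g.\ in Grigor'yan's survey \cite{Gr99}): for the forward direction, turn a bounded nonnegative subsolution $U$ of $\Delta U\le-\lambda U$ into the bounded parabolic subsolution $s(t,x)=e^{\lambda t}U(x)$ and invoke the weak parabolic maximum principle to get the contradiction $e^{\lambda T}\sup U\le\sup U$; for the converse, Laplace-transform the deficiency $\phi(t,y)=1-\int_M p(t,x,y)\,\dvol_x$ to produce a nontrivial bounded nonnegative solution of $\Delta U_\lambda=-\lambda U_\lambda$. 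The converse half is essentially routine once one verifies that $\phi$ is caloric in $y$ (using symmetry $p(t,x,y)=p(t,y,x)$ and differentiation under the integral) and that the boundary term at $t=0$ vanishes because $\phi(0^+,\cdot)=0$.

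The genuine gap, which you correctly flag but do not close, is precisely the assertion that on a stochastically complete manifold a bounded, continuous-up-to-$t=0$, $W^{1,2}_{loc}$ \emph{subsolution} of the heat equation obeys the maximum principle with respect to its initial datum. Theorem~\ref{Thm_StandardSCUniqueness} as stated only covers bounded \emph{solutions} of the Cauchy problem with zero initial data, and passing from solutions to weak subsolutions at the $C^0\cap W^{1,2}_{loc}$ regularity level is exactly the nontrivial content that the Alías--Mastrolia--Rigoli reference supplies (via Kato-type inequalities, cutoffs, and energy estimates; the present paper in fact imports the Brezis--Kato inequality from \cite{PVV24} for closely related reasons in the proof of Theorem~\ref{Thm_UniquenessDirichletAppendix}). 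Without citing or reproducing that sharp subsolution form of the weak parabolic maximum principle, the first implication remains incomplete; with it, your argument goes through as written.
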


\begin{proof}[Proof of Theorem \ref{Thm_UniquenessDirichletAppendix}]
    Let $v$ be as in the assumptions of the theorem. Define
    \begin{align}
        V_\lambda :\overline{\Omega}&\to \rr\\
        x&\mapsto \int_0^{+\infty} e^{-\lambda t} v(t,x) \dint{t},
    \end{align}
    where $\lambda\in (0,+\infty)$. We proceed by steps.

    \textbf{1}: \underline{$\Delta V_\lambda= -\lambda V_\lambda$ in $\Omega$}. First step consists in proving that $\Delta V_\lambda=-\lambda V_\lambda$ in the sense of distributions. To this aim, firstly observe that for any $\lambda>0$
    \begin{align}\label{Eq_Distributional_Appendix0}
        \left(\frac{\partial}{\partial t} + \Delta\right) (e^{-\lambda t} v) = -\lambda e^{-\lambda t} v
    \end{align}
    in the sense of distributions. Now fix a sequence of test functions $\Psi_{n,m}(t,x)=\psi(x)\phi_{n,m}(t)$ where $0\leq \psi\in C^\infty_c(\Omega)$ and $0\leq \phi_{n,m}\in C^{0,1}(0,+\infty)$ is given by
    \begin{align}
        \phi_{n,m}(t)=\begin{cases}
            0 & \inn \left[0,\frac{1}{n}\right)\\
            (tn-1)\left(\frac{2m}{n} \right)^{1/2} & \inn \left[\frac{1}{n},\frac{2}{n}\right)\\
            (mt)^{1/2} & \inn \left[\frac{2}{n}, \frac{1}{m}\right)\\
            1 & \inn \left[\frac{1}{m},m-1\right)\\
            m-t & \inn \left[m-1,m\right)\\
            0 & \inn \left[m,+\infty\right)
        \end{cases}
    \end{align}
    for $m<\frac{n}{2}$. In particular, the function $\frac{\partial \phi_{n,m}}{\partial t}$ exists almost everywhere and it is equal to
    \begin{align}
        \frac{\partial \phi_{n,m}}{\partial t}(t)=\begin{cases}
            0 & \inn \left(0,\frac{1}{n}\right)\\
            \left(2mn \right)^{1/2} & \inn \left(\frac{1}{n},\frac{2}{n}\right)\\
            \frac{1}{2} \left(\frac{m}{t}\right)^{1/2} & \inn \left(\frac{2}{n}, \frac{1}{m}\right)\\
            0 & \inn \left(\frac{1}{m},m-1\right)\\
            -1 & \inn \left(m-1,m\right)\\
            0 & \inn \left(m,+\infty\right).
        \end{cases}
    \end{align}
    By testing equation \eqref{Eq_Distributional_Appendix0} (in the distributional sense) with $\Psi_{n,m}$, we get
    \begin{align}
        - \lambda \int_\Omega \int_0^{+\infty} & e^{-\lambda t} v(t,x) \Psi_{n,m}(t,x) \dint{t} \dvol_x \\
        &=\int_\Omega \int_0^{+\infty} e^{-\lambda t} v(t,x) \left( -\frac{\partial}{\partial t} + \Delta \right) \Psi_{n,m}(t,x) \dint{t} \dvol_x\\
        &=\int_\Omega \int_0^{+\infty} e^{-\lambda t} v(t,x) \phi_{n,m}(t) \Delta \psi(x) \dint{t} \dvol_x\\
        &\quad - \int_\Omega \int_0^{+\infty} e^{-\lambda t} v(t,x) \psi(x) \frac{\partial \phi_{n,m}(t)}{\partial t} \dint{t} \dvol_x.
    \end{align}
    By dominated convergence theorem, letting first $n$ go to $+\infty$ and then $m$ to $+\infty$, the first integral converges
    \begin{align}
        \int_\Omega \int_0^{+\infty} & e^{-\lambda t} v(t,x) \phi_{n,m}(t) \Delta \psi(x) \dint{t} \dvol_x\\
        &\xrightarrow[m\to +\infty]{n\to +\infty} \int_\Omega \int_0^{+\infty} e^{-\lambda t} v(t,x) \dint{t}\ \Delta \psi(x) \dvol_x.
    \end{align}
    Analogously
    \begin{align}
        - \lambda \int_\Omega \int_0^{+\infty} & e^{-\lambda t} v(t,x) \Psi_{n,m}(t,x) \dint{t} \dvol_x\\
        &\xrightarrow[m\to +\infty]{n\to +\infty}  - \lambda \int_\Omega \int_0^{+\infty} e^{-\lambda t} v(t,x) \dint{t}\ \psi(x) \dvol_x.
    \end{align}
    To conclude, we observe that
    \begin{align}
        \Bigg|\int_\Omega \int_0^{+\infty}& e^{-\lambda t} v(t,x) \psi(x) \frac{\partial \phi_{n,m}(t)}{\partial t} \dint{t} \dvol_x\Bigg| \\
        &=\Bigg|\int_\Omega \left[\int_{\frac{1}{n}}^{\frac{2}{n}} e^{-\lambda t} v(t,x) (2mn)^{1/2} \dint{t}+\int_{\frac{2}{n}}^{\frac{1}{m}} e^{-\lambda t} v(t,x) \frac{m^{1/2}}{2t^{1/2}} \dint{t}\right.\\
        &\quad \quad \quad \left. - \int_{m-1}^{m} e^{-\lambda t} v(t,x) \dint{t}\right] \psi(x) \dvol_x\Bigg| \\
        &\leq \int_\Omega \left[||v||_{L^\infty((0,+\infty)\times \Omega)}(2mn)^{1/2} \frac{1}{n} + \int_{\frac{2}{n}}^{\frac{1}{m}} e^{-\lambda t} |v(t,x)| \frac{m^{1/2}}{2t^{1/2}} \dint{t}\right.\\
        &\quad \quad \quad \left. + \int_{m-1}^{m} e^{-\lambda t} |v(t,x)| \dint{t}\right] \psi(x) \dvol_x\\
        \xrightarrow[]{n\to +\infty}&\int_\Omega \left[\int_{0}^{\frac{1}{m}} e^{-\lambda t} |v(t,x)| \frac{m^{1/2}}{2t^{1/2}} \dint{t}+ \int_{m-1}^{m} e^{-\lambda t} |v(t,x)| \dint{t}\right] \psi(x) \dvol_x\\
        &\leq \int_\Omega \left[\sup_{t\in [0,1/m]} |v(\cdot,x)| + \int_{m-1}^{m} e^{-\lambda t} |v(t,x)| \dint{t}\right] \psi(x) \dvol_x\\
        &\xrightarrow[]{m\to +\infty} 0
    \end{align}
    by dominated convergence theorem, and recalling that $v(t,x)\to 0$ as $t\to 0$ locally uniformly in $x\in \Omega$, thank to the continuity of $v$ in $[0,+\infty)\times \Omega$. It follows that for any $\lambda>0$ the smooth function $V_\lambda$ satisfies
    \begin{align}
            \int_\Omega V_\lambda(x) \Delta\psi(x) \dvol_x=-\lambda \int_\Omega V_\lambda (x) \psi(x) \dvol_x \quad \quad \forall 0\leq \psi \in C^\infty_c(\Omega)
    \end{align}
    and $V_\lambda \equiv 0$ on $\partial \Omega$. Hence, $V_\lambda$ is a distributional (and hence classical) solution to
    \begin{align}
        \begin{cases}
            \Delta V_\lambda=-\lambda V_\lambda & \inn \Omega\\
            V_\lambda=0 & \onn \partial \Omega.
        \end{cases}
    \end{align}
    
    \textbf{2}: \underline{$V_\lambda$ is the constant null function}. Fix $\e>0$ and observe that 
    \begin{align}
    \Delta(V_\lambda-\e)= -\lambda V_\lambda \leq -\lambda (V_\lambda-\e).
    \end{align}
    By Brezis-Kato's inequality (see \cite[Proposition 4.1]{PVV24}) we have
    \begin{align}
            \Delta (V_\lambda-\e)_+ \leq -\lambda (V_\lambda-\e)_+ \quad \inn \Omega.
    \end{align}
    In particular, $(V_\lambda-\e)_+\in C^0(\Omega) \cap W^{1,2}_{loc}(\Omega)$ by \cite[Corollary 4.3]{PVV24}. Now extend $(V_\lambda-\e)_+$ to $0$ outside $\Omega$ and denote by $W$ this extension: then $0\leq W\in L^\infty(\Omega)\cap C^0(M) \cap W^{1,2}_{loc}(M)$ satisfies $\Delta W \leq -\lambda W$ in $M$ in the sense of distributions. By the stochastic completeness of $M$ (see Theorem \ref{Thm:SCUniqueness}) it follows that $W\equiv 0$ in $M$, implying $(V_\lambda-\e)_+\equiv 0$ in $\Omega$ and hence $V_\lambda\leq \e \quad \inn \Omega$. Since it holds for every $\e>0$, it follows that $V_\lambda\leq 0$ in $\Omega$. By replacing $V_\lambda$ with $-V_\lambda$, we get $V_\lambda\equiv 0$.

    \textbf{3}: \underline{Conclusion}. To conclude, we just observe two facts. The first, is that for any $x\in \Omega$ the function
    \begin{align}
         v(\cdot,x):[0,+\infty)&\to \rr\\
         t&\mapsto v(t,x).
    \end{align}
    is bounded and continuous. Secondly, by the second step it follows that for any fixed $x\in \Omega$ the function 
    \begin{align}
        \mathcal{L}[v(\cdot,x)]:(0,+\infty)&\to \rr \\
        \lambda &\mapsto V_\lambda(x)=\int_0^{+\infty} e^{-\lambda t} v(t,x) \dint{t},
    \end{align}
    which is the Laplace transform of $v(\cdot, x)$, is the constant null function $\mathcal{L}[v(\cdot,x)]\equiv 0$. By the standard theory of the Laplace transform (see \cite[Theorem 1.23]{schiff2013laplace}), it follows that $v(\cdot,x)\equiv 0$. Since it holds for any $x\in \Omega$, we get the claim.
\end{proof}


\section{Proof of Theorem \ref{Thm_oneofthevariables}}

We first prove the following general fact.

\begin{theorem}\label{Thm_General_oneofthevariables} Let $P=P(u,v)$ be a positive analytic function defined on an open set $\Omega$ of the plane containing the origin. Assume that $P(0,0)=1$, $P_{uv}=0$ on $\Omega$ and 
\begin{equation}\label{cond}
\Delta\log P=\phi(P),
\end{equation}
for an analytic function $\phi=\phi(r)$. Set $\psi(r)=r^2\phi(r)$ and assume further that the following condition holds:
\begin{align}\label{Eq:DerivativesConditionPsi}
\psi''(1)-2\psi'(1)\ne 0.
\end{align}
Then $P$ depends only on one of the two variables: $P(u,v)=1+f(u)$ or $P(u,v)=1+g(v)$.
\end{theorem}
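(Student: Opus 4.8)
Shrinking $\Omega$ to a disk about the origin (the conclusion then propagates to all of $\Omega$ by analyticity), the hypothesis $P_{uv}\equiv 0$ lets me write $P(u,v)=A(u)+B(v)$ with $A,B$ analytic, normalized so that $A(0)=1$, $B(0)=0$. I want to show that $A$ or $B$ is constant, so I argue by contradiction: assume neither is constant. Then $A'$ and $B'$ are analytic and not identically zero, hence I can choose a point $(u_0,v_0)$ as close to the origin as I like together with a neighbourhood $W\subseteq\Omega$ of it on which $A'B'\neq 0$ and on which $A$, $B$ are diffeomorphisms onto open intervals; all identities below are understood on $W$. Substituting $P=A+B$ into $\Delta\log P=\phi(P)$, using $(\log P)_{uu}=A''/P-(A')^2/P^2$ and the analogous formula in $v$, and multiplying through by $P^2$, I obtain the master functional equation
\begin{equation}\label{master}
\psi\big(A(u)+B(v)\big)=A'(u)^2+B'(v)^2-\big(A''(u)+B''(v)\big)\big(A(u)+B(v)\big).
\end{equation}

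\textbf{Forcing $\psi''$ to be affine.} Differentiating \eqref{master} with respect to $u$ and then to $v$, the terms $A'^2+B'^2$ and several cross terms cancel, and after dividing by $A'B'\neq 0$ I am left with
\[
\psi''\big(A(u)+B(v)\big)=-\frac{A'''(u)}{A'(u)}-\frac{B'''(v)}{B'(v)}=:\alpha(u)+\beta(v).
\]
Freezing $v=v_0$, resp.\ $u=u_0$, expresses $\alpha$ and $\beta$ in terms of $\psi''$; substituting back and setting $F(z)=\psi''\big(A(u_0)+B(v_0)+z\big)$, $x=A(u)-A(u_0)$, $y=B(v)-B(v_0)$ yields $F(x+y)=F(x)+F(y)-F(0)$ for $x,y$ in small intervals about $0$. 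Hence $G:=F-F(0)$ satisfies Cauchy's equation on an interval and, being analytic, is linear; so $\psi''$ is affine on a subinterval, and by analyticity $\psi(r)=\tfrac{k}{6}r^{3}+\tfrac{m}{2}r^{2}+nr+p$ on the whole interval where $\psi$ is defined, in particular near $r=1$.

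\textbf{Reading off $A,B$ and concluding.} Separating variables in $\alpha(u)+\beta(v)=kA(u)+kB(v)+m$ gives $\alpha(u)=kA(u)+c_1$, $\beta(v)=kB(v)+c_2$ with $c_1+c_2=m$. Integrating $-A'''/A'=kA+c_1$ twice produces $A''=-\tfrac k2A^2-c_1A+d_1$ and $(A')^2=-\tfrac k3A^3-c_1A^2+2d_1A+e_1$, and symmetrically for $B$. I then substitute all of this into \eqref{master} and compare coefficients of the monomials $A^iB^j$ — legitimate since $A(u)$ and $B(v)$ sweep out open intervals independently: the terms $A^3,B^3,A^2B,AB^2$ match automatically, the $A^2$- and $B^2$-terms force $m=0$, and the linear terms force $n=0$ (and $d_1=d_2$). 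Thus $\psi(r)=\tfrac k6r^3+p$, whence $\psi'(1)=\tfrac k2$, $\psi''(1)=k$, and $\psi''(1)-2\psi'(1)=0$, contradicting \eqref{Eq:DerivativesConditionPsi} (the case $k=0$ needs no separate treatment, the same computation then giving $m=n=0$ directly). Therefore $A$ or $B$ is constant, and using $A(0)+B(0)=1$ we get $P(u,v)=1+\big(A(u)-A(0)\big)$ or $P(u,v)=1+\big(B(v)-B(0)\big)$, which is the assertion.

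\textbf{Main obstacle.} The delicate point is the bookkeeping around \eqref{master}: one must justify that the auxiliary base point $(u_0,v_0)$ can be taken near the origin with $A,B$ genuine local diffeomorphisms there, so that a polynomial identity in $(A(u),B(v))$ forces coefficient-by-coefficient equality; and one must make sure that the affine conclusion for $\psi''$, obtained only on a subinterval disjoint in general from where $P(0,0)=1$ lives, is transported to a neighbourhood of $r=1$ by the analyticity of $\psi$ (equivalently of $\phi$ on the interval swept out by $P$). Everything else is a routine, if somewhat lengthy, computation.
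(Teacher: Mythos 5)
Your proof is correct and takes a genuinely different route from the paper's. Both start from the decomposition $P=1+f(u)+g(v)$ and the master identity $f'^2+g'^2-(1+f+g)(f''+g'')=\psi(1+f+g)$, but diverge from there. The paper proceeds by Taylor-expanding $f$, $g$ and $\psi(1+f+g)$ at the origin up to order four, extracting eight coefficient relations, repackaging them into four identities each carrying a factor $\psi''(1)-2\psi'(1)$, and then doing case analysis plus a second-order ODE uniqueness argument to force $f\equiv 0$ or $g\equiv 0$. Your argument instead differentiates the master equation $\partial_u\partial_v$ to obtain $\psi''(A+B)=-A'''/A'-B'''/B'$, which in a neighbourhood of a point where $A'B'\ne 0$ is a Pexider-type functional equation that forces $\psi''$ to be affine; you then integrate the resulting third-order ODEs for $A$ and $B$, substitute back, compare coefficients in the bivariate polynomial identity, and conclude $\psi(r)=\tfrac{k}{6}r^3+p$, hence $\psi''(1)-2\psi'(1)=0$, contradiction.

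The small bookkeeping worries you raise are all fine: non-constant analytic $A$, $B$ have only isolated critical points, so a base point $(u_0,v_0)$ with $A'(u_0)B'(v_0)\ne 0$ exists arbitrarily near the origin; the map $(u,v)\mapsto(A(u),B(v))$ is a local diffeomorphism onto a rectangle in the $(a,b)$-plane, so the polynomial identity does equate coefficients; and since $(u_0,v_0)$ may be taken near $(0,0)$, the interval on which you prove $\psi''$ affine is near $r=1$, so analyticity of $\psi$ on the connected range of $P$ (which contains $1$) transports the conclusion. What your approach buys is an explanation of the normalization: $\psi''(1)-2\psi'(1)$ vanishes precisely for $\psi$ of the special cubic form forced when both $A$ and $B$ are non-constant, so the hypothesis is visibly sharp (as the paper's remark about $P=1+u^2+v^2$, $\psi\equiv 4$ also illustrates). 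What the paper's approach buys is avoiding the Cauchy/Pexider step and the ODE integration, staying entirely within a finite Taylor computation at the origin.
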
 

For the proof, we start by observing that, since $P_{uv}=0$, we can write:
$$
P(u,v)=1+f(u)+g(v),
$$
for analytic functions $f,g$ of one real variable defined in a neighborhood of the origin. Condition \eqref{cond} is equivalent to
$$
P^2\Delta\log P=\psi(P).
$$
Since $P^2\Delta\log P=\abs{\nabla P}^2+P\Delta P$ this identity is rewritten:
\begin{equation}\label{mainidentity}
f'^2+g'^2-(1+f+g)(f''+g'')=\psi(1+f+g).
\end{equation}
We develop in Taylor series at the origin and set:
$$
\begin{aligned}
f(u)&=a_1u+a_2u^2+a_3u^3+a_4u^4+a_5u^5+\dots\\
g(v)&=b_1v+b_2v^2+b_3v^3+b_4v^4+b_5v^5+\dots\\
\psi(1+f+g) & =F_{00}+F_{10}u+F_{01}v+F_{20}u^2+F_{11}uv+F_{02}v^2+\dots
\end{aligned}
$$
We set $\Gamma_k=\psi^{(k)}(1)$; a straightforward calculation yields the following identities:

\begin{equation}\label{zero}
a_1^2+b_1^2-2(a_2+b_2)=\Gamma_0
\end{equation}
\begin{equation}\label{onet}
2a_1(a_2-b_2)-6a_3=\Gamma_1a_1
\end{equation}
\begin{equation}\label{two}
2b_1(b_2-a_2)-6b_3=\Gamma_1b_1
\end{equation}
\begin{equation}\label{three}
2a_2^2-12a_4-2a_2b_2=a_2\Gamma_1+\frac 12a_1^2\Gamma_2
\end{equation}
\begin{equation}\label{four}
2b_2^2-12b_4-2a_2b_2=b_2\Gamma_1+\frac 12b_1^2\Gamma_2
\end{equation}
\begin{equation}\label{five}
-6a_1b_3-6a_3b_1=a_1b_1\Gamma_2
\end{equation}
\begin{equation}\label{six}
-12a_4b_1-6a_2b_3=\frac 12\Big(\Gamma_3a_1^2b_1+2\Gamma_2a_2b_1\Big)
\end{equation}
\begin{equation}\label{seven}
-12a_1b_4-6a_3b_2=\frac 12\Big(\Gamma_3b_1^2a_1+2\Gamma_2b_2a_1\Big).
\end{equation}
\begin{equation}\label{eight}
\text{if $a_1=b_1=0$:}\quad -12(a_2b_4+a_4b_2)=\Gamma_2a_2b_2 
\end{equation}

From this table we get the following set of relations.

\begin{lemma} The following identities hold:

\begin{equation}\label{idone}
(\Gamma_2-2\Gamma_1)a_1b_1=0
\end{equation}
\begin{equation}\label{idtwo}
b_1\Big((\Gamma_3-\Gamma_2)a_1^2+2(\Gamma_2-2\Gamma_1)a_2\Big)=0
\end{equation}
\begin{equation}\label{idthree}
a_1\Big((\Gamma_3-\Gamma_2)b_1^2+2(\Gamma_2-2\Gamma_1)b_2\Big)=0
\end{equation}
\begin{equation}\label{idfour}
(\Gamma_2-2\Gamma_1)a_2b_2=0\quad\text{if $a_1=b_1=0$}
\end{equation}
\end{lemma}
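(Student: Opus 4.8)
The plan is to obtain each of the four identities by a purely algebraic elimination: starting from the table \eqref{zero}--\eqref{eight}, one removes the ``higher'' Taylor coefficients $a_3,b_3,a_4,b_4$ to be left with a relation involving only $a_1,a_2,b_1,b_2$ and the numbers $\Gamma_k=\psi^{(k)}(1)$. No new analytic input is needed; everything is linear algebra in these coefficients, so the task is to choose the right multipliers for each equation.

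First I would prove \eqref{idone}. Multiplying \eqref{onet} by $b_1$ and \eqref{two} by $a_1$ and adding, the terms $2a_1b_1(a_2-b_2)$ cancel, yielding
\[
-6a_3b_1-6a_1b_3=2\Gamma_1a_1b_1 .
\]
Since the left-hand side is precisely the left-hand side of \eqref{five}, comparison gives $\Gamma_2a_1b_1=2\Gamma_1a_1b_1$, which is \eqref{idone}.

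Next, for \eqref{idtwo} I would eliminate $a_4$ and $b_3$ from \eqref{six}. Multiplying \eqref{three} by $b_1$ expresses $-12a_4b_1$ in terms of $a_1,a_2,b_1,b_2,\Gamma_1,\Gamma_2$, and multiplying \eqref{two} by $a_2$ expresses $-6a_2b_3$; adding these two, the mixed terms $2a_2b_1b_2$ and the quadratic terms $2a_2^2b_1$ cancel in pairs, leaving
\[
-12a_4b_1-6a_2b_3=2\Gamma_1a_2b_1+\tfrac12\Gamma_2a_1^2b_1 .
\]
Equating this with the right-hand side of \eqref{six} and collecting terms produces $b_1\big((\Gamma_3-\Gamma_2)a_1^2+2(\Gamma_2-2\Gamma_1)a_2\big)=0$, i.e. \eqref{idtwo}. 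Identity \eqref{idthree} is then immediate by the symmetry $u\leftrightarrow v$ (equivalently $a_k\leftrightarrow b_k$) of the whole system, which sends \eqref{six},\eqref{three},\eqref{two} to \eqref{seven},\eqref{four},\eqref{onet}.

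Finally, for \eqref{idfour} I would set $a_1=b_1=0$, so that \eqref{three} and \eqref{four} reduce to formulas for $-12a_4$ and $-12b_4$ in terms of $a_2,b_2,\Gamma_1$. Multiplying the first by $b_2$ and the second by $a_2$ and adding, all terms cubic in $a_2,b_2$ cancel and one obtains $-12(a_4b_2+a_2b_4)=2\Gamma_1a_2b_2$; comparison with \eqref{eight} gives $(\Gamma_2-2\Gamma_1)a_2b_2=0$. I do not expect a genuine obstacle here: the only delicate point is the bookkeeping, namely keeping track of which coefficient each equation is being used to eliminate, which is exactly what pins down the multipliers ($b_1$, $a_2$, $b_2$, etc.) employed above. (Note that $\Gamma_2-2\Gamma_1=\psi''(1)-2\psi'(1)$, so these four identities are precisely what makes the hypothesis \eqref{Eq:DerivativesConditionPsi} usable in the proof of Theorem \ref{Thm_General_oneofthevariables}.)
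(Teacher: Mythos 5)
Your proposal is correct and follows exactly the same route as the paper's (very terse) proof: the paper simply lists which of \eqref{zero}--\eqref{eight} to combine for each identity, and your multipliers ($b_1$ and $a_1$ for \eqref{idone}; $b_1$ and $a_2$ for \eqref{idtwo}; the $u\leftrightarrow v$ symmetry for \eqref{idthree}; $b_2$ and $a_2$ for \eqref{idfour}) are precisely the eliminations the paper has in mind. I checked each cancellation and comparison and they all hold.
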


\begin{proof} Identity \eqref{idone} is obtained considering \eqref{onet}, \eqref{two} and \eqref{five}; identity \eqref{idtwo} is obtained considering \eqref{two}, \eqref{three} and \eqref{six}; identity \eqref{idthree} is the mirror case; identity \eqref{idfour} is obtained considering \eqref{three}, \eqref{four} and \eqref{eight}.
\end{proof}

Recall our assumption $\Gamma_2-2\Gamma_1\ne 0$.
From \eqref{idone} one sees that $a_1b_1=0$.

\subsection{First case:  $b_1=0$.} Then $b_3=0$ from \eqref{two} and $a_1b_2=0$ from \eqref{idthree}.

\smallskip

{\bf Subcase $b_1=b_2=0$.}  Set $u=0$ so that $P(0,v)=1+g(v)$; the differential equation for $g$ takes the form:
$$
a_1^2+g'^2-(1+g)(2a_2+g'')=\psi(1+g).
$$
One sees that then $g(0)=g'(0)=0$; since by \eqref{zero} we have $a_1^2-2a_2=\psi(1)$, we see that $g(v)=0$ is the only solution; hence $P(u,v)=1+f(u)$ depends only on $u$.

\smallskip

{\bf Subcase $b_1=0, b_2\ne 0$.} Then $a_3=0$ from \eqref{onet} and $a_1=0$ from \eqref{idthree}. As $b_2\ne 0$ and $a_1=b_1=0$ we also get $a_2=0$ from \eqref{idfour}.
Now set $v=0$, that is, consider the function $f(u)=P(u,0)$. It satisfies the differential equation:
$$
f'^2-(1+f)(2b_2+f'')=\psi(1+f),
$$
and we know that $f(0)=f'(0)=0$: hence $f(u)=0$ and $P(u,v)=1+g(v)$.

\subsection{Second case:  $a_1=0$.} It is the mirror case, and is discussed in the same way, by replacing the $a$'s with the $b$'s. 
 
\smallskip 

The proof of Theorem \ref{Thm_General_oneofthevariables}  is complete. We can now prove Theorem \ref{Thm_oneofthevariables}.

\begin{proof}[Proof of Theorem \ref{Thm_oneofthevariables}]
We apply Theorem \ref{Thm_General_oneofthevariables} when 
$\phi(x)=6kx^{1/3}-\frac{6\sigma}{k}x^{-1/3}$
that is,
$$
\psi(x)=6kx^{7/3}-\frac{6\sigma}{k}x^{5/3}.
$$
The condition \eqref{Eq:DerivativesConditionPsi} translates into the fact that:
$$
k^2=\dfrac 56\sigma.
$$
This is impossible if $\sigma=0,-1$. If $\sigma=1$ this would imply (recall that $k=E(0,0)^{-1}$) that 
$$
E(0,0)^2=\dfrac 65.
$$
We can assume that $E$ is not constant on $\Omega$; hence we can always put the origin at a point where the above equality does not hold, and the conclusion follows. The proof is complete.
\end{proof}

\begin{remark}
Note that condition \eqref{Eq:DerivativesConditionPsi} is crucial for the proof of Theorem \ref{Thm_oneofthevariables}. To see why, consider the function $P(u,v)=1+u^2+v^2$, which depends both on $u$ and $v$. This function satisfies
\begin{align}
    \Delta \log(P)=\phi(P),
\end{align}
where $\phi(r)=4r^{-2}$ is analytic around $r=1$, and $\psi(r)=r^2 \phi(r)=4$. In this case $\psi''-2\psi'\equiv 0$, so condition \eqref{Eq:DerivativesConditionPsi} cannot be satisfied at any point. This example illustrates that the condition is not merely technical, but necessary for the result to hold.
\end{remark}

\section{Proof of Theorem \ref{Nitscheg}}\label{Appendix:Nitsche}

We start by supposing that $\Omega$ is $\frac{1}{2}$-uniformly dense in $\partial \Omega$. We recall that if $(M,g)$ is a Riemannian manifold with associated heat kernel $k$ and $\Phi\in \isom{M}$, then
\begin{align}
k(t,p,q)=k(t,\phi(p),\phi(q)) \quad \forall(t,p,q)\in \rr_{>0}\times M\times M.
\end{align}
For a reference see \cite{Gr09}. In particular, it follows that the heat kernel of any model manifold $M_\sigma^m$ only depends on the geodesic distance from a fixed pole $o$. Hence, the heat kernels of $\rr^3, \sss^3$ and $\hh^3$ are radial. Then, we can apply the argument presented in Section \ref{uniformdensity}, obtaining that $u_C\equiv \frac{1}{2}$ on $(0,+\infty)\times \partial \Omega$.

Conversely, if $\Omega$ is a $\frac{1}{2}$-domain, then its boundary is one of the surfaces listed in Theorem \ref{Thm_Main}. As showed in Section \ref{Subsec:DensitySigmaConstant}, using the same notation, there exist a subgroup of isometries $\mathcal{T}\leq \isom M$ and an isometry $\Psi\in \isom M$ such that
 \begin{enumerate}
        \item every $T\in\mathcal{T}$ preserves both $\Omega_+$ and $\Omega_-$, hence $T(\partial \Omega)=\partial \Omega$;
        \item $\Psi$ sends $\Omega_+$ to $\Omega_-$ and viceversa;
        \item\label{3_Lem_12_RototranslationsReflection} for every $x\in \partial \Omega$ there exists $T\in \mathcal{T}$ such that $\Psi(x)=T(x)$.
\end{enumerate}
Fix  $x\in \partial \Omega$ and $R>0$ and let $T\in \mathcal{T}$ be such that $T(x)=\Psi(x)$: since both $T$ and $\Psi$ are isometries, by the properties listed above it is easy to see that
\begin{align}
    |B_R(x)\cap \Omega_+|&= |\Psi(B_R(x) \cap \Omega_+)|\\
    &=|B_R(\Psi(x))\cap \Omega_-|\\
    &=|B_R(T(x))\cap \Omega_-|\\
    &=|T(B_R(x) \cap \Omega_-)|\\
    &=|B_R(x) \cap \Omega_-|.
\end{align}
It follows that for every $x\in \partial \Omega$ and for every $R>0$
\begin{align}
    |B_R(x)|=|B_R(x)\cap \Omega_+|+|B_R(x)\cap \Omega_-|=2 |B_R(x)\cap \Omega_+|.
\end{align}
By the Lebesgue's differentiation theorem, it follows that for every $x\in \partial \Omega$
\begin{align}
    |\partial B_R(x)|=2 |\partial B_R(x)\cap \Omega_+|
\end{align}
for almost every $R>0$, obtaining the claim.

\begin{remark}
    As can be noticed in the above proof, Theorem \ref{Lem_12_RototranslationsReflection} is, in fact, a sufficient condition for a domain to be $\frac{1}{2}$-uniformly dense in its boundary.
\end{remark}

\bibliographystyle{abbrv}
\bibliography{references}
\end{document}